\def\lll{\resetlinenumber[1]}
\def\lll\input{#}1{\lll\input{#1}}
\theoremstyle{plain}
\newtheorem{theorem}[equation]{Theorem}
\newtheorem{lemma}[equation]{Lemma}
\newtheorem{corollary}[equation]{Corollary}
\newtheorem{example}[equation]{Example}
\newtheorem{proposition}[equation]{Proposition}
\theoremstyle{definition}
\newtheorem{definition}[equation]{Definition}
\theoremstyle{remark}
\newtheorem{remark}[equation]{Remark}
\numberwithin{equation}{subsection}
\renewcommand\footnotemark{}
\begin{document}
\title
    {The Spherical Hecke algebra, partition functions, and motivic integration}
\author{William Casselman, Jorge E. Cely, and Thomas Hales}

\begin{abstract}   
This article gives a proof of the Langlands-Shelstad fundamental 
lemma for the spherical Hecke algebra
for every unramified $p$-adic reductive group $G$ in large positive 
characteristic.  The proof is based on the transfer principle
for constructible motivic integration.    To carry this out,
we introduce a general family of partition functions attached to 
the complex $L$-group of the unramified $p$-adic group $G$.
Our partition functions specialize to Kostant's $q$-partition 
function for complex connected groups and also 
specialize to the Langlands $L$-function of a spherical representation.
These partition functions are used to extend numerous 
results that were previously known only when the $L$-group is
connected (that is, when the $p$-adic group is split).
We give explicit formulas for branching rules, the inverse of the weight multiplicity matrix,
the Kato-Lusztig formula for the inverse Satake transform,
the Plancherel measure,  and Macdonald's formula for the spherical Hecke algebra on a
non-connected complex group (that is, non-split unramified $p$-adic group).  
%
\end{abstract}

 \lhead{Casselman, Cely, Hales}
\rhead{Hecke algebras, partition functions, and motivic integration}

\parskip=\baselineskip

 \maketitle



%


\newcommand{\ring}[1]{\mathbb{#1}}
\newcommand{\ang}[1]{\langle{#1}\rangle}
\def\op#1{{\operatorname{#1}}}

\def\Q{{\ring{Q}}}

\def\C{\mathcal C}
\def\N{\mathcal N}
\def\H{\mathcal H}
\def\T{\mathcal T}
\def\D{\mathcal D}

\def\n{{\mathfrak n}}
\def\g{{\mathfrak g}}
\def\t{{\mathfrak t}}
\def\h{{\mathfrak h}}

\def\inv{\op{inv}}
\def\dom{P^+}
\newcommand{\card}{\op{card}}
\def\Frob{\op{Frob}}
\def\dotw{\dot w}
\def\uu{\upsilon} 

\def\libel#1{{\text{\sc [#1]~}}\label{#1}}
\def\rif#1{(\ref{#1}-{\text{\sc #1})}}

Let $F$ be a $p$-adic field; that is, a finite extension of
$\ring{Q}_p$ or $\ring{F}_p((t))$.  Let $G$ be an unramified reductive
group and $H$ an unramified endoscopic group of $G$, both defined over
$F$.  Let $\H(G)$ and $\H(H)$ be the spherical Hecke algebras on $G$
and $H$.  Associated with a morphism $\xi:{}^LH\to {}^LG$ of
$L$-groups, there is a homomorphism $b_\xi:\H(G)\to \H(H)$, obtained
by composing three maps: the Satake transformation of $\H(G)$, the
pullback under $\xi$, and the inverse Satake transformation to
$\H(H)$.

Let $A$ be a maximal split torus of $G$.  Let 
\[
\dom = \{\lambda\in X_*(A) \mid \ang{\lambda,\alpha^\vee}\ge 0,
\quad \alpha\in \Delta_1\}
\]
be a positive chamber of $X^*(\hat S) = X_*(A)$.  (The notation is
explained in Section \ref{sec:L}.)  The spherical Hecke algebra
$\H(G)=\H(G//K)$ of complex-valued functions that are biinvariant with
respect to a given hyperspecial subgroup $K$ has a linear basis given
by characteristic functions $f_\lambda$ of double cosets
$K\varpi^\lambda K$.  Here $\varpi$ is a fixed uniformizing element
and $\lambda$ runs over the cocharacters in $P^+$.

In this article, we use motivic integration to study the spherical
Hecke algebra and the function $B_\xi:P^+\times H(F)\to \ring{C}$,
given by $(\lambda,h)\mapsto b_\xi(f_\lambda)(h)$.  This function is
the specialization of a constructible motivic function (Theorem
\ref{thm:B}).

As an application, we show that the fundamental lemma for the
spherical Hecke algebra falls within the scope of the transfer
principle for constructible motivic functions
(Section~\ref{sec:transfer}).  This implies that the fundamental lemma
holds for the spherical Hecke algebra over fields of large positive
characteristic (Theorem \ref{thm:fl}).

This application to the fundamental lemma is the main motivation for
this work.  Our results overlap with those of Bouthier, who proves the
fundamental lemma for the spherical Hecke algebra in positive
characteristic under the restrictions that the group $G$ is semisimple
and simply connected, and the endoscopic group is split
\cite[Theorem~0.2]{bouthier}.  Our proof of the fundamental lemma for
the spherical Hecke algebra holds without restriction on the group and
endoscopic group.  Unlike Bouthier, we are unable to be explicit in
our assumption on the characteristic of the field.  This is an
unfortunate limitation of the methods we use.  In other work, Lemaire,
Moeglin, and Waldspurger propose that the method of close fields might
be used to transfer the fundamental lemma for the spherical Hecke
algebra from characteristic zero to positive characteristic, but as
far as we know, this has not been carried out~\cite[\S1.3]{LMW}.

The construction of $B_\xi$ passes through the Langlands dual ${}^LG$,
which is a non-connected complex reductive group.  Our
constructibility result for $B_\xi$ follows from the Presburger
constructibility of various functions on lattices in the dual:
Macdonald's formula, weight multiplicity formulas, the inverse of the
weight multiplicity matrix, the Plancherel measure, the geometric
Satake transform, and the Kato-Lusztig formula for the inverse Satake
transform.  When $G$ and $H$ are split, we can take ${}^LG = \hat G$
and ${}^LH=\hat H$ to be connected.  In this case, the desired
formulas were previously known.  In this article, we generalize these
formulas to non-connected complex reductive groups.  These
generalizations are a major part of this work.

One novelty of this work is that we show how to extend the theory of
motivic integration to the Langlands dual group, by encoding
representation-theoretic data of the complex dual group as Presburger
constructible functions on the character lattice. These Presburger
functions can then be recombined with constructible functions on the
$p$-adic group.  A second innovation is to encode the entire Hecke
algebra into a single constructible function $B_\xi$.  This makes it
possible to invoke the the transfer principle of motivic integration a
single time, rather than once for each function in the Hecke algebra.
(Invoking the transfer principle an infinite number of times could
potentially leave us with nothing, because we lose finitely many
primes with each invocation.)

A framework for studying the spherical Hecke algebra through motivic
integration is provided by Cely's thesis \cite{cely}.  This article
builds on that work.  The second author would like to
express gratitude to his thesis advisor T. Hales for all the
teaching and support.  We thank Julia Gordon, who served on Cely's
thesis committee and who provided valuable suggestions.

\section{the Satake transform, 
Macdonald's formula, and related topics}

In this section, we extend various results from split $p$-adic groups
to unramified groups (from complex connected reductive groups to
non-connected groups on the $L$-group side).

\subsection{root systems}

Let $G$ be an unramified reductive group over a $p$-adic field $F$.
It is defined by descent from a finite unramified extension $E/F$ over
which $G$ splits.  It is determined by an automorphism $\theta$ of the
root datum $(X^*,\Phi,X_*,\Phi^\vee)$ of the split form $G^*$ of $G$.
The automorphism $\theta$ has finite order and preserves a set of
simple roots associated to some set $\Phi^+$ of positive roots.  If
$\op{Frob}$ is the Frobenius automorphism of $E/F$, the group $G/F$ is
defined by twisting the action of Frobenius on $G(E)$ by $\theta$.  Let
$(B,T,X)$ be a pinning of $G$ over $F$, preserved by $\theta$.

Let $A$ be a maximal split torus in $T$.  For any $R\subseteq \Phi$,
let $M_R$ be the centralizer of
\[
A_R = \{a\in A\mid \alpha(a)=1,\quad \alpha\in R\}.
\]
We have $X_*(A) = X_*(T)^\theta$ and $X^*(A) =
X^*(T)/(1-\theta)X^*(T)$.  The pairing of $X^*(A)$ and $X_*(A)$ is
induced naturally from that of $X^*(T)$ and $X_*(T)$.

The image of $\Phi$ in $X^*(A)$ is the restricted root system $\Phi_{res}$.
It is well known that $\Phi_{res}$ is indeed a root system, and this is
easy to verify directly in this context.  The roots of $\Phi_{res}$ are in
bijection with the $\theta$-orbits of roots in $\Phi$, and this
bijection restricts to a bijection between simple roots in $\Phi_{res}$
and orbits of simple roots in $\Phi$.  If $\alpha$ is a root of $\Phi$
(or coroot), we write $[\alpha]=\{\alpha,\theta\alpha,\ldots\}$ for
its $\theta$-orbit, often identified with a root of $\Phi_{res}$.
A root $\alpha\in\Phi_{res}$ is {\it indivisible} if $\frac12\alpha$ is not a root.
Let $\Phi^{red}$ and 
Let $\Phi_{red}$ be the set of nonmultipliable and
indivisible roots of $\Phi_{res}$.  
They are both reduced root systems.
Two roots $\alpha$ and $\alpha'\in\Phi_{res}$ are {\it homothetic} if
$\alpha' = k\alpha$ for some $k>0$.  Every root is homothetic to an
indivisible root.  

Each root $\beta\in\Phi$ can be assigned a diagram $A_1$ or $A_2$ as
follows.  The construction is best described in the split group $G^*$.
Let $S=[\beta']$ be the $\theta$-orbit that corresponds to the
indivisible root homothetic to $[\beta]$.  The simple positive roots
of $M_S$ form a single $\theta$-orbit $S$.  We consider its Dynkin
diagram.  By the transitivity of $\theta$, all components of the
diagram have the same type, either $A_1$ or $A_2$.  This is the
diagram of $\beta$.  This construction can be applied to $(G^*,\Phi)$
or to $(\hat G,\Phi^\vee)$, where $\hat G$ is the complex dual.  A
coroot $\alpha^\vee$ has the same diagram as the root $\alpha$.  We let
$b(\beta)$ be the number of connected components of the Dynkin
diagram of $M_S$.  (According to the types of Kottwitz and Shelstad, type I
means diagram $A_1$, type II means a simple root in diagram $A_2$, and
type III means a highest root in diagram $A_2$
\cite{kottwitz1999foundations}.)

Let $N:X_*(T)\to X_*(A)$ be the norm map: 
$N\alpha = \sum_{\alpha'\in  [\alpha]} \alpha'$.

\begin{lemma}\label{lemma:norm}
 If $[\alpha]\in\Phi_{red}$ is an indivisible restricted root, then
  the corresponding coroot is
\begin{equation}\label{eqn:norm}
[\alpha]^\vee = k\, N\alpha^\vee
\end{equation}
when the diagram of $\alpha$ has type $A_k$, for $k=1,2$.
\end{lemma}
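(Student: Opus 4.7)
The plan is to identify $[\alpha]^\vee$ by explicitly computing the reflection $s_{[\alpha]}$ on $X^*(A)$ as the descent of a $\theta$-invariant element $w\in W(\Phi)$, and then reading off the coroot from the defining formula $s_{[\alpha]}(\bar\lambda)=\bar\lambda-\langle\bar\lambda,[\alpha]^\vee\rangle[\alpha]$.

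First I would reduce to the Levi $M_S$ where $S=[\alpha]$ is the $\theta$-orbit of simple positive roots of $M_S$. By hypothesis the Dynkin diagram of $M_S$ is a disjoint union of components all of type $A_k$, permuted by $\theta$ transitively. In type $A_1$ the orbit $S=\{\alpha_1,\ldots,\alpha_n\}$ consists of mutually orthogonal simple roots; in type $A_2$ the orbit splits into pairs $\{\gamma_{i,1},\gamma_{i,2}\}$, one per component, with the two roots of each pair swapped by an appropriate power of $\theta$.

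I would then write down the correct preimage $w$ of $s_{[\alpha]}$ in $W(\Phi)$. In type $A_1$, I take $w=\prod_{\alpha'\in S}s_{\alpha'}$, a product of mutually commuting reflections. In type $A_2$, I take $w$ to be the product over components of the longest Weyl element of each $A_2$; a direct computation in $A_2$ identifies this longest element with the single reflection $s_{\gamma_{i,1}+\gamma_{i,2}}$, with coroot $\gamma_{i,1}^\vee+\gamma_{i,2}^\vee$. In both cases $w$ commutes with $\theta$ by the transitivity hypothesis and descends to a reflection on $X^*(A)$ fixing the hyperplane orthogonal to $[\alpha]$, so the descent is necessarily $s_{[\alpha]}$.

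Finally I would compute $w(\lambda)-\lambda\in X^*(T)$ explicitly and reduce modulo $(1-\theta)X^*(T)$, using that all $\alpha'\in S$ descend to $[\alpha]$ in $X^*(A)$, that $\gamma_{i,1}+\gamma_{i,2}$ descends to $2[\alpha]$ in the $A_2$ case, and that the pairing against the $\theta$-invariant coroot $N\alpha^\vee$ passes to the quotient. In type $A_1$ this yields $\bar w(\bar\lambda)=\bar\lambda-\langle\bar\lambda,N\alpha^\vee\rangle[\alpha]$, hence $[\alpha]^\vee=N\alpha^\vee$. In type $A_2$, summing $\gamma_{i,1}^\vee+\gamma_{i,2}^\vee$ over components gives $N\alpha^\vee$, but the root factor contributes an extra $2$, so $\bar w(\bar\lambda)=\bar\lambda-\langle\bar\lambda,2N\alpha^\vee\rangle[\alpha]$, identifying $[\alpha]^\vee=2N\alpha^\vee$. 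The main technical point is ensuring that $w$ descends to a \emph{reflection} (rather than some larger element) on $X^*(A)$; this ultimately follows from the indivisibility of $[\alpha]$ together with the rank-one nature of the restricted root system generated by $[\alpha]$ inside $M_S$.
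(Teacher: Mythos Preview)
Your argument is correct and is essentially the standard computation behind the cited result. The paper itself does not give a proof but simply refers to \cite[1.3.9]{kottwitz1999foundations}; your sketch in fact unpacks that reference, using exactly the description the paper records just before the lemma (that the simple reflection in $W^\theta$ attached to $[\alpha]$ is the longest element of the Weyl group of $M_{[\alpha]}$). One small point worth making explicit in your write-up: to confirm that the descended $\bar w$ really is the reflection $s_{[\alpha]}$, it suffices to check $\langle[\alpha],kN\alpha^\vee\rangle=2$ directly in each case, which follows from the orthogonality of distinct components together with $\langle\gamma_1,\gamma_1^\vee+\gamma_2^\vee\rangle=1$ in a single $A_2$ factor.
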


\begin{proof}
(See \cite[1.3.9]{kottwitz1999foundations}.)
\end{proof}

Let $W$ be the Weyl group attached to the root datum of $G$ over a
splitting field $E$.  The restricted Weyl group $W^\theta$ is the
subgroup of $W$ commuting with $\theta$.  The group $W^\theta$ is a
Coxeter group.  The simple reflection in $W^\theta$ associated with an
orbit $[\alpha]$ of simple roots in $\Phi$ is the longest element in
the Weyl group of the Levi component $M_{[\alpha]}$.  We write
$\ell(w)$ for the length of $w\in W^\theta$, computed relative to the
set of simple reflections of the Coxeter group $W^\theta$.

\subsection{$L$-groups}\label{sec:L}

We review some aspects of the theory of non-connected complex
reductive groups from Steinberg \cite{steinberg1968endomorphisms},
Springer \cite{springer2010linear}, Kottwitz and Shelstad
\cite{kottwitz1999foundations}, Haines \cite{haines2016dualities}, and
Chriss \cite{chriss}.

Let ${}^LG = \hat G \rtimes \ang{\theta}$ be the $L$-group of the
unramified group $G$.  It has root system dual to that of $G$.  It is
a semidirect product of a connected complex reductive group $\hat G$
and a finite cyclic group generated by an outer automorphism $\theta$
of $\hat G$.  The automorphism $\theta$ preserves a pinning $(\hat
B,\hat T,\hat X)$ of $\hat G$. Let $\hat B = \hat T\hat N$.

Let $\Psi=\Phi^\vee$ and $\Psi^\vee=\Phi$ be the root and coroot
systems of the complex group $\hat G$ with respect to $\hat T$, and
let $\Psi^+$ be the set of positive roots with respect to $(\hat
T,\hat B)$.  If $R$ is any root system with positive roots $R^+$, we
let $\rho(R^+) = (1/2)\sum_{R^+} \alpha$ be the half-sum of positive
roots in $R^+$.  We have $\rho(\Psi^{red,+}) = \rho(\Psi^+)$.

The torus $\hat T$ is $\theta$-stable.  We form the quotient $\hat S =
\hat T/(1-\theta) \hat T$, where
\[
(1-\theta)\hat T = \{ t\theta(t^{-1}) \mid t\in \hat T\},
\]
using Steinberg's additive notation for a multiplicative group.  If
$\lambda\in X^*(\hat T)^\theta$ is $\theta$-fixed, then it is trivial
on $(1-\theta)\hat T$ and descends to a character $\lambda\in X^*(\hat
S)$.  This gives $X^*(\hat T)^\theta = X^*(\hat S)$.

We abbreviate $Y^* = X^*(\hat S)$. Let $O=O_F$, the ring of integers
of $F$.  We have identifications
\begin{equation}\label{eqn:identify}
T/T(O)=A/A(O)=X_*(A)=X_*(T)^\theta  =X^*(\hat T)^\theta = Y^* = X^*(\hat S).
\end{equation}
Each unramified character $\chi:T\to \ring{C}^\times$, by these
identifications, is a homomorphism
\begin{equation}
\chi\in\op{Hom}(T/T(O),\ring{C}^\times) = 
\op{Hom}(X^*(\hat S),\ring{C}^\times) = 
X_*(\hat S)\otimes \ring{C}^\times = \hat S.
\end{equation}
We write $\chi = \chi_s$, for $s\in\hat S$.

Let $e^\lambda$ be the basis element of the group algebra
$\ring{C}[Y^*]$ of $Y^*$, indexed by $\lambda\in Y^*$.

Recall that
\[
\dom = \{\lambda\in X_*(A) \mid \ang{\lambda,\alpha^\vee}\ge 0,
\quad \alpha\in \Delta_1\}
\]
where $\Delta_1=\Delta(\Psi^{red})$ is a set of simple roots for
$\Psi^{red}$.  A basis of $W^\theta$-fixed functions in
$\ring{C}[Y^*]$ is
\begin{equation}\label{eqn:mmu}
m_\mu = \sum_{\lambda\in W^\theta(\mu)} e^\lambda, 
\quad \text{for }\mu\in P^+,
\end{equation}
where $W^\theta(\mu)$ is the orbit of $\mu\in\ring{C}[Y^*]$ under
$W^\theta$.

Let $\hat G_\theta$ be the complex group with Cartan subgroup $\hat
S$, root system $\Psi^{red}=(\Phi_{red})^\vee$, and root datum
\[
(X_*(A),(\Phi_{red})^\vee,X^*(A),\Phi_{red}) 
= (X^*(\hat S),\Psi^{red},X_*(\hat S),\Psi^{red,\vee}).
\]
The complex group $\hat G_\theta$ is the $L$-group of $G^1$, the
identity component of the $\theta$-fixed subgroup of the split form
$G^*$ \cite[\S1.3]{kottwitz1999foundations}.  As we will see, $\hat
G_\theta$ appears naturally in the twisted Weyl character formula for
${}^LG$, hence also in the Satake transform and its inverse.

\begin{example} Let $G=\op{SU}(n,E/F)$ be an unramified unitary group
  in an odd number of variables $n=2k+1$.  The automorphism $\theta$
  has order $2$.  The cocharacter groups of $T$ and $A$ are
\[
X_*(T) = \{(t_1,\ldots,t_{n})\in \ring{Z}^n\mid t_1+\cdots t_n=0\}, 
\quad
X_*(A)  = \ring{Z}^k,
\]
with identification $(t_1,\ldots,t_k)\in X_*(A)\mapsto
(t_1,\ldots,t_k,0,-t_k,\ldots,-t_1)\in X_*(T)^\theta$.  Following
Lemma \ref{lemma:norm}, we compute norms to get
\[
\Psi^{red} = \{\pm t_i\pm t_j\mid i\le j\le k\}\subset X_*(A).
\]
We recognize the root datum as that of $\hat G_\theta =
\op{Sp}(2k,\ring{C})$ with root system $\Psi^{red}$.  In this case, $G^1
= \op{GL}(n)^{\theta,0} = \op{SO}(2k+1)$, which indeed has $L$-group
$\hat G_\theta$.
\end{example}

\subsection{the partition function}

Let $\theta_1\in N_{\hat G}(\hat T) \rtimes \ang{\theta}$ be an
element of finite order.  Set $\hat T_1 = \hat T/(1-\theta_1)\hat T$
and $X^*(\hat T_1) = X^*(\hat T)^{\theta_1}$.  Let $N_1:X^*(\hat T)\to
X^*(\hat T)^{\theta_1}$ be the norm map with respect to $\theta_1$:
\[
N_1 \mu = \sum_{\mu'\in \ang{\theta_1}\mu} \mu'.
\]

Let $V$ be a finite dimensional representation of ${}^LG$, with
weight space decomposition 
$V=\oplus V_\mu$.  We have
$\theta_1(V_\mu) = V_{\theta_1\mu}$.  Let $R\subset X^*(\hat T)$ be a
$\theta_1$-stable set of weights of $V$, and set
\begin{equation}\label{eqn:VR}
V_R = \oplus_{\mu\in R} V_\mu.
\end{equation}

We define a symbolic operator $E$ on $V_R$ that is diagonal with
respect to the weight space decomposition:
\[
E v = e^{\mu} v, \text{ for } v \in V_\mu.
\]
(We warn the reader that $E$ denotes an unramified field extension $E/F$
or this symbolic operator, depending on the context.)
We define a $q$-determinant $D(\hat G,V_R,\theta_1,E,q)$ and a
$q$-partition function $P(\hat G,V_R,\theta_1,E,q)$ as
\begin{align}\label{eqn:det}
D(\hat G,V_R,\theta_1,E,q) &= \det(1- q \theta_1 E;V_R);\\ 
P(\hat G,V_R,\theta_1,E,q) &= D(\hat G,V_R,\theta_1,E,q)^{-1}.
\end{align}
When $V$ is the adjoint representation, we sometimes abbreviate
$D(\hat G,V_R,\theta_1,E,q)$ to $D(\hat G,R,\theta_1,E,q)$.  The
determinant and partition function carry the same information, and we
pass back and forth between $D$ and $P$ according to convenience.

We may view the determinant and partition functions as functions on
$\hat T$, by evaluating each $e^\mu (t) = \mu(t)$, for $t\in \hat T$,
so that
\[
\det(1- q\theta_1 E;V_R)(t) = \det(1-q\theta_1 t;V_R).
\]
Taking $\theta_1^{-1}(u) u^{-1}\in (1-\theta_1)\hat T$, we have
\begin{align*}
\det (1-q \theta_1 t \theta_1^{-1}(u) u^{-1};V_R) 
&= \det(1- u (q \theta_1 t ) u^{-1};V_R) = \det(1-q\theta_1 t;V_R).
\end{align*}
Thus, the partition function and determinant descend to functions on
$\hat T_1$.  

For $w\in W^\theta$ and $v\in V_\beta$, define $E^w$ by $E^w v =
e^{w\beta} v$.  If $\dot w$ is a representative of $w$ in the
normalizer of $\hat T$, then
\[
(\dot w^{-1} E \dot w) v = E^w v.
\]
Also, for $t\in \hat T$,
\[
(E v) (w^{-1} t w) = (E^w v) (t) = (w\beta)(t) v.
\]
Write $E^{-w} = (E^{-1})^w = (E^w)^{-1}$.

The next lemma gives the general shape of a factorization
of the determinant.

\begin{lemma}\label{lemma:fact}  
  Let $R = \{\theta^i\mu\}$ be the orbit of a single weight of $V$.
  Then $D(\hat G,V_R,\theta_1,E,q)$ is a finite product of factors of
  the form
\[
1 - \zeta q^b e^{N_1\mu},
\]
where $b$ is the cardinality of the orbit $R$ and $\zeta^k=1$, where
$k b$ is the order of $\theta_1$.
\end{lemma}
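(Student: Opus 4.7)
The plan is to exploit the block structure of $V_R = \bigoplus_{i=0}^{b-1} V_{\theta_1^i \mu}$, on which $T := q\theta_1 E$ acts as a shift with scalars.

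First I would record the key action. For $v \in V_{\theta_1^i\mu}$, the operator $E$ acts by the scalar $e^{\theta_1^i \mu}$ and $\theta_1$ maps into $V_{\theta_1^{i+1}\mu}$, so by induction
\[
T^j v \;=\; q^j\, e^{\,\mu + \theta_1\mu + \cdots + \theta_1^{j-1}\mu}\, \theta_1^j v
\]
for $v\in V_\mu$. Taking $j=b$ gives $T^b v = q^b\, e^{N_1\mu}\, A v$, where $A := \theta_1^b\big|_{V_\mu}$ is an endomorphism of $V_\mu$ (since $\theta_1^b$ stabilizes the weight space $V_\mu$). Because $\theta_1$ has order $kb$, the operator $A$ satisfies $A^k = I$ and is therefore diagonalizable with eigenvalues $\zeta_1,\dots,\zeta_d$ (where $d=\dim V_\mu$) each satisfying $\zeta_j^k=1$.

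Next I would invoke the standard block-cyclic determinant identity. With the chosen basis of weight spaces, $T$ has the block-companion form with zero diagonal blocks, shift blocks $q\, e^{\theta_1^i\mu}\, I$ below the diagonal for $i=0,\dots,b-2$, and the wrap-around block $q\, e^{\theta_1^{b-1}\mu}\, A$ in the top-right corner. An elementary Schur-complement computation (generalizing the $b=2$ case $\det\!\bigl(\begin{smallmatrix}\lambda I & -B\\ -A & \lambda I\end{smallmatrix}\bigr)=\det(\lambda^2 I - AB)$) gives
\[
\det(\lambda I - T;\,V_R) \;=\; \det\!\bigl(\lambda^b I - T^b\big|_{V_\mu};\,V_\mu\bigr).
\]
Specializing to $\lambda=1$ and substituting the formula for $T^b|_{V_\mu}$ yields
\[
D(\hat G,V_R,\theta_1,E,q) \;=\; \det\!\bigl(I - q^b\, e^{N_1\mu}\, A;\,V_\mu\bigr) \;=\; \prod_{j=1}^{d}\bigl(1 - \zeta_j\, q^b\, e^{N_1\mu}\bigr),
\]
which is the required factorization.

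The only mildly subtle point is the block-cyclic determinant identity; everything else is bookkeeping with the orbit structure and the order of $\theta_1$. I would either prove the identity by the Schur-complement argument sketched above (iterated $b-1$ times to eliminate all off-diagonal blocks) or cite it as a standard fact about block companion matrices. Nothing here depends on $V$ being the adjoint representation, so the lemma holds at the stated level of generality.
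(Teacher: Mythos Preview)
Your proof is correct and follows essentially the same approach as the paper: both exploit the cyclic block structure of $q\theta_1 E$ on $V_R$ together with the diagonalizability of $\theta_1^b$ on $V_\mu$ with $k$th-root-of-unity eigenvalues. The only cosmetic difference is the order of operations: the paper first decomposes $V_\mu$ into one-dimensional $\langle\theta_1^b\rangle$-eigenspaces $W$ and then computes the explicit $b\times b$ determinant on each cyclic summand $\langle\theta_1\rangle W$, whereas you first apply the block-cyclic determinant identity $\det(I-T;V_R)=\det(I-T^b|_{V_\mu};V_\mu)$ and then diagonalize $A=\theta_1^b|_{V_\mu}$.
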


\begin{proof} Fix $\mu_0\in R$ and let $\mu_i = \theta_1^i \mu_0$.
  The abelian group $\ang{\theta_1^b}$ acts on $V_{\mu_0}$.  Write
  $V_{\mu_0} = \oplus W$, where each $W$ is a $1$-dimensional
  representation of $\ang{\theta_1^b}$.  Then $V_R = \oplus
  \ang{\theta_1} W$, where $\ang{\theta_1} W = \oplus_{i=0}^{b-1}
  \theta_1^i W$.  We have $\theta_1^b v_0 = \zeta v_0$, for $v_0\in W$
  and some $\zeta = \zeta_W$.  Let $v_i = \theta_1^i v_0$.  The
  operator $1 - q \theta_1 E$ on the summand $\ang{\theta_1} W$ with
  respect to this basis is
\[
\begin{pmatrix}
1 & -q e^{\mu_0} & 0 & \ldots\\
0 & 1 & -q e^{\mu_1} & \ldots\\
   & \ldots & & \\
-\zeta  q e^{\mu_{b-1}} & 0 & \ldots & 1
\end{pmatrix}.
\]
The result follows by taking its determinant.
\end{proof}

\subsubsection{relation with $L$-functions}

Langlands has defined an $L$-function for spherical representations of
$G$.  It is written $L(\pi,V,q^{-s})$, where $\pi$ is an irreducible
admissible representation of $G$ with a $K$-fixed vector, and $V$ is a
representation of the $L$-group ${}^LG$, which we assume factors
through some finite unramified Galois extension $\op{Gal}(E/F)$.

Let $t_\pi\in \hat S$ be the Frobenius-Hecke parameter of $\pi$.  We
let $\theta_1 = \theta = \Frob$, the automorphism of $\hat G$ coming
from the action of Frobenius on the root datum.  We let $R$ be the set
of all weights, so that $V = V_R$.  As observed above, the partition
function $P(\hat G,V,\theta,E,q)$ is a function on $\hat T_1 = \hat
S$.  

\begin{lemma} 
  The partition function evaluates to the local $L$-function.  More
  precisely,
\[
P(\hat G,V,\theta,E,q_F^{-s})(t_\pi) = L(\pi,V,q_F^{-s}).
\]
\end{lemma}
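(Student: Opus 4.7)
The plan is to unpack both sides of the claimed identity and observe that they agree; the lemma is really a matter of matching conventions, so the argument is bookkeeping rather than substantive.

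First, I would recall Langlands's definition of the local $L$-function. For a spherical representation $\pi$ with Frobenius–Hecke parameter $t_\pi\in\hat S$, and for a representation $V$ of ${}^LG$ factoring through $\op{Gal}(E/F)$, one sets
\[
L(\pi,V,z) \;=\; \det\bigl(1 - z\,\rho_V(\tilde t_\pi\rtimes\theta);\,V\bigr)^{-1},
\]
where $\tilde t_\pi\in\hat T$ is any lift of $t_\pi$. Independence of the lift comes from $\hat T$-conjugacy in the semidirect product $\hat T\rtimes\ang{\theta}$: for $u\in\hat T$,
\[
u^{-1}(\tilde t_\pi\rtimes\theta)u \;=\; u^{-1}\tilde t_\pi\theta(u)\rtimes\theta \;=\; \bigl((1-\theta)(u^{-1})\,\tilde t_\pi\bigr)\rtimes\theta,
\]
so $\det(1-z\,\rho_V(\,\cdot\,))$ depends only on the image in $\hat T/(1-\theta)\hat T = \hat S$. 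Specializing $z=q_F^{-s}$ gives the $L$-function at $t_\pi$.

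Next, I would unpack the partition function. By definition
\[
P(\hat G,V,\theta,E,q)^{-1} \;=\; D(\hat G,V,\theta,E,q) \;=\; \det(1 - q\theta E;\,V),
\]
and $E$ is diagonal with $E\vert_{V_\mu} = e^\mu \cdot \op{id}$. Evaluating $e^\mu$ at $\tilde t_\pi\in\hat T$ yields $\mu(\tilde t_\pi)$, which is precisely the scalar by which $\tilde t_\pi$ acts on $V_\mu$ via $\rho_V$. Hence as operators on $V$,
\[
(\theta E)(\tilde t_\pi) \;=\; \rho_V(\tilde t_\pi\rtimes\theta),
\]
and consequently
\[
\det(1 - q\theta E;\,V)(\tilde t_\pi) \;=\; \det\bigl(1 - q\,\rho_V(\tilde t_\pi\rtimes\theta);\,V\bigr).
\]
Setting $q=q_F^{-s}$ and inverting gives the claim. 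The fact that the left-hand side only depends on the image of $\tilde t_\pi$ in $\hat T_1 = \hat S$ was already verified in the paragraph preceding the lemma, and matches the $\hat S$-invariance on the $L$-function side.

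There is no real obstacle here. The essential content is the observation that the symbolic operator $E$ is the tautological avatar of the action of $\hat T$ on $V$, so the characteristic polynomial of $q\theta E$ evaluated at $t\in\hat T$ recovers the characteristic polynomial of $q\rho_V(t\rtimes\theta)$. The only point that requires any care is the convention for the Frobenius–Hecke parameter (whether one writes $\tilde t_\pi\rtimes\theta$ or $\theta\rtimes\tilde t_\pi$), but both give conjugate elements in ${}^LG$ and hence the same determinant.
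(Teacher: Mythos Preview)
Your proposal is correct and follows exactly the paper's approach: both sides are reciprocals of determinants, and unpacking the definition of the symbolic operator $E$ shows they are the determinant of the same element $t_\pi\theta$ acting on the same space $V$. Your write-up is simply a more explicit version of the paper's one-line proof.
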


\begin{proof} Both sides are defined as the reciprocal of a
  determinant.  On both sides it is the determinant of the same
  element acting on the same vector space.
\end{proof}

\subsubsection{the partition function 
for the adjoint representation}\label{sec:adjoint}

A case of particular importance for us is the following.  Let $\g$ be
the adjoint representation of $\hat G$.  It is an irreducible highest
weight representation whose highest weight is $\theta$-fixed.  Hence
$\g$ extends to an irreducible representation of $\hat
G\rtimes\ang{\theta}$.  Let $\n$ be the Lie algebra of $\hat N$.  Then
$\n = \g_R$, where $R$ is the set of positive roots.  The set $R$ is
$\theta$-stable.  We have a partition function
\[
P(\hat G,\n,\theta,E,q).
\]
Much of this article handles this particular case.  When $\hat G$,
$\n$, and $\theta$ are fixed, we abbreviate $P(E,q) = P(\hat
G,\n,\theta,E,q)$.

Recall that each $\alpha\in \Psi^{red,+}$ has the form
$[\beta^\vee]^\vee$, with $[\beta^\vee]\in \Phi_{red}$ and some
$\beta$ in $\Psi^+$.  The root $\beta$ has a diagram $A_1$ or $A_2$,
associated constant $b=b(\beta)$, and $k N\beta = \alpha$ for diagram
type $A_k$ (Lemma \ref{lemma:norm}).  For each $\alpha\in \Psi^{red}$,
we define
\begin{equation}\label{eqn:d}
d_\alpha(q) =
\begin{cases} {1-q^b e^\alpha},    &\text{if diagram } A_1;\\
{(1-q^{2b} e^{\alpha/2})(1+q^b e^{\alpha/2})},
&\text{if diagram } A_2.
\end{cases}
\end{equation}
We have the following factorization refining Lemma \ref{lemma:fact}.

\begin{lemma} \label{lemma:prod}
The determinant factors as
\[
\det(1-\theta  E q ;\n) = \prod_{\alpha\in\Psi^{red,+}} d_\alpha(q).
\]
\end{lemma}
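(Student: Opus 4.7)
The plan is to apply Lemma \ref{lemma:fact} orbit-by-orbit. Since $\theta$ permutes the weight spaces in the decomposition $\n = \bigoplus_{\beta\in\Psi^+} \g_\beta$, the operator $1-q\theta E$ is block-diagonal along $\theta$-orbits, yielding
\[
\det(1-q\theta E;\n) = \prod_O \det(1-q\theta E;\g_O),
\]
where $O$ runs over $\theta$-orbits in $\Psi^+$ and $\g_O=\bigoplus_{\beta\in O}\g_\beta$. The task reduces to identifying which orbits contribute to each $\alpha\in\Psi^{red,+}$ and pinning down the root of unity $\zeta$ in Lemma \ref{lemma:fact} for each.

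First I would identify the orbit structure. Fix $\alpha\in\Psi^{red,+}$, pick $\beta\in\Psi^+$ with $[\beta^\vee]^\vee$ homothetic to $\alpha$, so that $\alpha = k\, N\beta$ by Lemma \ref{lemma:norm}, and let $M_S$ denote the corresponding Levi. In the type $A_1$ case with $b=b(\beta)$ components, $S$ is a $\theta$-orbit of $b$ pairwise orthogonal simple roots, and these are all the positive roots of $M_S$, producing a single $\theta$-orbit of size $b$ in $\Psi^+$ that projects to $\alpha$. In the type $A_2$ case with $b$ components, $S$ is a single $\theta$-orbit of size $2b$ made of the $2b$ simple roots of the $b$ copies of $A_2$, while the remaining positive roots of $M_S$ are the $b$ highest roots $h_1,\ldots,h_b$, which form a second $\theta$-orbit of size $b$. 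The transitivity of $\theta$ on $S$ forces $\theta^b$ to act on each $A_2$ component as its non-trivial diagram automorphism, swapping the two simple roots.

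Next I would determine $\zeta$ on each orbit. On any orbit of simple-root vectors, $\theta$-stability of the pinning $(\hat B,\hat T,\hat X)$ gives $\theta X_\beta = X_{\theta\beta}$ with no scalar factor, so $\theta^{|O|}$ is the identity on each $X_\beta$ and $\zeta=1$. Combined with $\alpha=N\beta$ in type $A_1$ and $\alpha=2N\beta=2Nh_i$ in type $A_2$, Lemma \ref{lemma:fact} produces the factor $1-q^b e^\alpha$ in type $A_1$ and the factor $1-q^{2b}e^{\alpha/2}$ from the short orbit in type $A_2$. On the long $A_2$ orbit, each highest-root vector factors as $X_{h_i}=[X_{\alpha_{i,1}},X_{\alpha_{i,2}}]$ up to normalization, and the component-wise flip yields
\[
\theta^b X_{h_i} = [X_{\alpha_{i,2}},X_{\alpha_{i,1}}] = -X_{h_i},
\]
so $\zeta=-1$ and the long orbit contributes $1+q^b e^{\alpha/2}$. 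Multiplying the short and long contributions in the $A_2$ case gives exactly $d_\alpha(q)$, while the $A_1$ case is immediate.

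The main obstacle is the sign computation on the long $A_2$ orbit: one must justify that $\theta^b$ really acts as the diagram flip on each $A_2$ component and that this flip negates the highest-root vector. The first point is a consequence of the single-orbit condition on $S$ together with the $\theta$-stable pinning; the second follows from skew-symmetry of the commutator bracket applied to a Chevalley basis. Both ingredients are standard in the Kottwitz--Shelstad type classification \cite[\S1.3]{kottwitz1999foundations}, and with them in hand the remainder is routine bookkeeping combining Lemmas \ref{lemma:fact} and \ref{lemma:norm}.
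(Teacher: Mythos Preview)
Your approach is essentially the same as the paper's: block-diagonalize along $\theta$-orbits, classify orbits by diagram type $A_1$ or $A_2$, and obtain the sign $\zeta=-1$ on the highest-root orbit in type $A_2$ via antisymmetry of the bracket. One small imprecision worth flagging: your pinning argument for $\zeta=1$ literally applies only to simple roots of $\hat G$, whereas the roots in a type-$A_1$ orbit (and in the $2b$-element orbit in type $A_2$) are simple only in $M_S$, not in $\hat G$; the paper's sketch glosses over the same point by simply asserting that such a basis can be chosen, implicitly relying on a $\theta$-equivariant Chevalley basis as in the cited Jantzen and Wendt references.
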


\begin{proof} Similar factorizations in the special case $q=1$ are
  found in \cite{jantzen1977darstellungen}, \cite{wendt2001weyl}.
  Here is a sketch.  The determinant is block diagonal, with a block
  for each $\theta$-orbit in $\Psi^+$.

  We first consider diagram type $A_1$.  Write $\alpha = N\beta$, as
  above with $\beta\in\Psi$.  On the block $[\beta]$, we can pick a
  basis $X_i$ of the root spaces $\n_{\theta^i\beta}$ such that
  $\theta$ acts as $\theta X_i = X_{i+1}$, with indices mod $b$.  The
  determinant restricted to this block satisfies the fool's identity
  $\det(I-A) = \det(I)-\det(A)$, which yields $1- \prod_{\beta'\in
    [\beta]} {q e^{\beta'}} = 1- q^b e^{N \beta}$.

  Next consider diagram type $A_2$.  Write $\alpha = 2N\beta$ as
  above.  We choose three positive roots $\beta,\beta',\gamma\in\Psi$
  forming the positive root system of $A_2$, where
  $\gamma=\beta+\beta'$ is the highest root.  We have
  $N\gamma=N\beta=N\beta'=\alpha/2$.  Recall that $b$ is the number of
  connected components in the Dynkin diagram.  Then $\theta^b$
  preserves the $A_2$ factor and $\theta^b(\beta)=\beta'$.  There are
  two $\theta$-orbits of roots: $[\gamma]$ and $[\beta]=[\beta']$ of
  cardinalities $b$ and $2b$.  We may pick root vectors
  $X_{\beta},X_{\beta'},X_{\gamma}$ in the root spaces of
  $\beta,\beta',\gamma$ such that
\[
\theta^b(X_\beta)= X_{\beta'},\quad \theta^b(X_{\beta'})=X_\beta,\quad
\theta^b X_\gamma = \theta^b [X_\beta,X_{\beta'}] 
= [X_{\beta'},X_\beta] = -[X_\beta,X_{\beta'}] = -X_\gamma.
\]
Note the sign $(-1)$ that appears for the orbit $[\gamma]$.  We choose
root vectors on the entire orbits by $\theta^i X_\delta=X_{\theta^i
  \delta}$, for $i=1,\ldots,b-1$ and
$\delta\in\{\beta,\beta',\gamma\}$.  We compute the determinant on
these two orbits as before.  For the orbit $[\gamma]$, we obtain
$1+\prod_{\gamma'\in [\gamma]} {q e^{\gamma'}} = 1+ q^b e^{\alpha/2}$.
The orbit $[\beta]$ gives $1-q^{2b} e^{\alpha/2}$.
\end{proof}

In the special case $\theta=1$, the matrix $q \theta E$ is diagonal,
every orbit has cardinality $1$, and the partition function is a
product over positive roots $P(E,q) = \prod_{\beta\in\Psi^+} (1- q
e^\beta)^{-1}$.  This is the classical $q$-partition function.

\begin{corollary}[twisted Weyl denominator]\label{cor:prod1} 
Specializing to $q=1$, we have
\[
P(E,1) = \prod_{\alpha\in\Psi^{red,+}} (1-e^{\alpha})^{-1}.
\]
\end{corollary}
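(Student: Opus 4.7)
The plan is to specialize the factorization of Lemma \ref{lemma:prod} at $q=1$ and observe that each factor collapses to the expected Weyl-denominator factor $(1-e^\alpha)$, regardless of whether the diagram type is $A_1$ or $A_2$.

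Concretely, I would start from the identity
\[
\det(1-\theta E q;\n) = \prod_{\alpha\in\Psi^{red,+}} d_\alpha(q)
\]
given by Lemma \ref{lemma:prod}, where $d_\alpha(q)$ is defined by \eqref{eqn:d}. Setting $q=1$ factor by factor: for $\alpha$ of diagram type $A_1$, the factor $1-q^b e^\alpha$ specializes immediately to $1-e^\alpha$. For $\alpha$ of diagram type $A_2$, the factor $(1-q^{2b}e^{\alpha/2})(1+q^{b}e^{\alpha/2})$ specializes to $(1-e^{\alpha/2})(1+e^{\alpha/2})=1-e^\alpha$. In both cases one obtains $d_\alpha(1)=1-e^\alpha$.

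Multiplying these specializations yields
\[
D(\hat G,\n,\theta,E,1) = \prod_{\alpha\in\Psi^{red,+}}(1-e^\alpha),
\]
and inverting this equality gives the stated formula for $P(E,1)$, since $P$ is defined as $D^{-1}$.

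There is essentially no obstacle here; the content is just the algebraic identity $(1-x)(1+x)=1-x^2$ applied to $x=e^{\alpha/2}$, which reconciles the $A_2$ factor with the $A_1$ factor at $q=1$. The only point worth remarking on is that $\alpha/2$ appears only in the $A_2$ case, where it lies in the weight lattice because $\alpha = 2N\beta$ by Lemma \ref{lemma:norm}, so every symbol $e^{\alpha/2}$ is well defined in $\ring{C}[Y^*]$.
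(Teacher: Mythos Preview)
Your proof is correct and follows exactly the paper's approach: set $q=1$ in Lemma \ref{lemma:prod} and observe that $d_\alpha(1)=1-e^\alpha$ in both diagram types. Your added remark that $e^{\alpha/2}=e^{N\beta}$ is well defined in $\ring{C}[Y^*]$ is a welcome clarification, but otherwise the argument is identical.
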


\begin{proof}  
Set $q=1$ in the lemma and observe that $d_\alpha(1)=1-e^\alpha$.
\end{proof}

\begin{corollary}\label{cor:weyl-p}  
for all $w\in W^\theta$,
\[
P(E^w,q) P(E^{-w},q) = P(E,q)P(E^{-1},q).
\]
\end{corollary}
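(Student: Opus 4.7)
The plan is to apply Lemma \ref{lemma:prod} to each of the four factors and exploit that $w\in W^\theta$ acts by bijections on the restricted root system $\Psi^{red}$.

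First I would extend the definition \eqref{eqn:d} of $d_\alpha(q)$ verbatim to arbitrary $\alpha\in\Psi^{red}$ (not only positive roots), noting that the diagram type and the constant $b$ depend only on the underlying $\theta$-orbit in $\Phi$, hence are well-defined on negative restricted roots as well. Since $w\in W^\theta$ commutes with $\theta$, conjugation by any lift of $w$ carries the $\theta$-orbit $[\beta]$ and its Levi $M_{[\beta]}$ isomorphically onto $[w\beta]$ and $M_{[w\beta]}$, so the diagram type and $b$ are also $W^\theta$-invariant.

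Next I would rerun the proof of Lemma \ref{lemma:prod} with $E^w$ in place of $E$. The block decomposition over $\theta$-orbits, the matrix for $1-q\theta E$ on each block, and the sign $\zeta$ arising from $\theta^b X_\gamma=-X_\gamma$ in type $A_2$ all depend only on $\theta$ and on the chosen root vectors, not on $E$. Passing from $E$ to $E^w$ therefore merely replaces each $e^\mu$ by $e^{w\mu}$, giving
\[
D(\hat G,\n,\theta,E^w,q)\;=\;\prod_{\alpha\in\Psi^{red,+}} d_{w\alpha}(q),
\]
where I have used $w(N\beta)=N(w\beta)$ since $w$ commutes with $\theta$.

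Since $w$ permutes $\Psi^{red}$ and satisfies $w\Psi^{red,-}=-w\Psi^{red,+}$, multiplying the factorizations of $D(E^w,q)$ and $D(E^{-w},q)$ yields
\[
D(E^w,q)\,D(E^{-w},q)\;=\;\prod_{\alpha\in\Psi^{red,+}} d_{w\alpha}(q)\,d_{-w\alpha}(q)\;=\;\prod_{\gamma\in\Psi^{red}} d_\gamma(q),
\]
which is manifestly independent of $w$. Specializing to $w=1$ shows the same product equals $D(E,q)\,D(E^{-1},q)$, and inverting gives the asserted identity for $P$. The only nontrivial input is the $W^\theta$-invariance of the diagram type and of $b(\beta)$; I expect this to be the main conceptual point, but it follows immediately from $w$ commuting with $\theta$.
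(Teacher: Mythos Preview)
Your proof is correct and follows essentially the same approach as the paper: both arguments reduce to the observation that $D(E^w,q)D(E^{-w},q)=\prod_{\alpha\in\Psi^{red}} d_\alpha(q)$ is the full-root-system product, which is $W^\theta$-invariant because $w$ permutes $\Psi^{red}$ while preserving the diagram type and the constant $b$. The paper's proof is simply a terser statement of exactly what you wrote out.
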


\begin{proof} 
By the lemma,
\[
P(E,q)P(E^{-1},q) = \prod_{\alpha\in \Psi^{red}} d_{\alpha}(q)^{-1},
\]
as $\alpha$ runs over the full root system $\Psi^{red}$.  The result
follows by observing that $w\in W^\theta$ permutes $\Psi^{red}$,
preserving the diagram type $A_k$ and constant $b$ attached to each
root.
\end{proof}

\subsection{twisted Weyl character formula}\label{sec:weyl-char}

We review the proof of the Weyl-character formula, as presented in
\cite{kostant1961lie}, \cite{jantzen1977darstellungen},
\cite{wendt2001weyl}, and \cite{kumar2009characters}.  At the same
time, we consider various $q$-deformations of the standard formulas.

Let $\lambda$ be a dominant weight in $X^*(\hat T)^\theta = X^*(\hat
S)$.  Let $V_\lambda$ be the irreducible module of $\hat G$ with
highest weight $\lambda$.  The $\theta$-invariance of $\lambda$
implies that $V_\lambda$ extends uniquely to a representation of $\hat
G \rtimes \ang{\theta}$ such that $\theta v = v$ for $v$ in the
highest weight space of $V_\lambda$.  We let $\tau_\lambda$ be the
character on $V_\lambda$, restricted to $\hat G\rtimes\theta$.   The
$\hat G$-conjugacy class of $t\theta\in \hat T\rtimes\theta$ depends
only on the image of $t$ in $\hat S$.  Thus, we may consider
$\tau_\lambda\in \ring{C}[X^*(\hat S)] =\ring{C}[Y^*]$.  Furthermore,
$\tau_\lambda$ is $W^\theta$-invariant.

Let $\rho = \rho(\Psi^+) = \rho(\Psi^{red,+})$.  We define a dot
operator $w\bullet \mu = w(\mu+\rho)-\rho$, for $w\in W^\theta$ and
$\mu\in Y^*$.  We define an alt-symmetrizer operator
\[
J:\ring{C}[Y^*]\to \ring{C}[Y^*],
\quad J(f) = \sum_{w\in W^\theta} (-1)^{\ell(w)} w(f e^\rho) e^{-\rho}.
\]

\begin{theorem}[twisted Weyl character]  
  For every dominant weight $\lambda\in X^*(\hat T)^\theta = X^*(\hat
  S)$,  the irreducible representation $V_\lambda$ of ${}^LG$ has
  character $\tau_\lambda\in \ring{C}[Y^*]$ on $\hat G\rtimes \theta$,
  where
\[
\tau_\lambda = J(e^\lambda) P(E^{-1},1).
\]
\end{theorem}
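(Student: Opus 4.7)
The strategy is to adapt the classical Verma-module proof of the Weyl character formula, tracking the $\theta$-action with the $q$-partition-function machinery from the preceding subsection; this follows the route of \cite{jantzen1977darstellungen}, \cite{wendt2001weyl}, and \cite{kumar2009characters}.

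The first step is to compute the $\theta$-twisted character of a Verma module. For any $\mu\in X^*(\hat T)^\theta$, extend the $\hat G$-Verma module $M(\mu)$ to $\hat G\rtimes\langle\theta\rangle$ by letting $\theta$ act by $1$ on the highest-weight line; this extension is forced (and unique) by requiring $\theta$-equivariance of the canonical projection from the enveloping algebra. By PBW, $M(\mu)\cong U(\n^-)\otimes\ring{C}_\mu$ as an $\hat T\rtimes\langle\theta\rangle$-module, and after passing to the associated graded, $U(\n^-)\cong S(\n^-)$ as such a module. The formal identity $\op{tr}(g\mid S(V))=\det(1-g\mid V)^{-1}$ applied to $g=t\theta$ on $V=\n^-$, together with the factorization of Lemma \ref{lemma:prod} (transported to $\n^-$, or equivalently applied to $\n$ with operator $E^{-1}$), gives
\[
\tau_{M(\mu)} = e^\mu\, P(E^{-1},1).
\]

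Next I pass from Verma modules to the irreducible $V_\lambda$ via the alternating-sum identity $\chi_{V_\lambda}=\sum_{w\in W}(-1)^{\ell(w)}\chi_{M(w\bullet\lambda)}$ in the Grothendieck group of category $\mathcal{O}$, arising from a $\theta$-equivariant BGG resolution of $V_\lambda$ (here $\theta$-equivariance is built in because $V_\lambda$ and its central character are $\theta$-stable). Taking the $\theta$-twisted trace on both sides kills every summand whose highest weight $w\bullet\lambda$ is not $\theta$-fixed, so contributions cancel in $\theta$-orbits and the surviving summands are exactly $w\in W^\theta$; for these, $w\bullet\lambda$ is indeed fixed, because both $\lambda$ and $\rho=\rho(\Psi^+)=\rho(\Psi^{red,+})$ are $\theta$-invariant. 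Using the standard fact that the sign $(-1)^{\ell(w)}$ computed in $W$ agrees with the sign for the Coxeter length in $W^\theta$ on elements of $W^\theta$, Step 1 collapses the sum to
\[
\tau_\lambda \;=\; \sum_{w\in W^\theta}(-1)^{\ell(w)}\,e^{w\bullet\lambda}\cdot P(E^{-1},1)\;=\;J(e^\lambda)\,P(E^{-1},1).
\]

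The main obstacle is controlling the $\theta$-equivariance of the resolution in the presence of $A_2$-type orbits, where Lemma \ref{lemma:prod} produces a nontrivial sign coming from $\theta^b$ acting as $-1$ on the highest-root vector $X_\gamma$ of each $A_2$ block. This sign must be reconciled simultaneously in two places: inside the twisted Verma character (where it is already packaged into $P(E^{-1},1)$ via the factor $(1+q^b e^{\alpha/2})$ specialized at $q=1$) and inside the intertwiners between Verma summands swapped by $\theta$ within the resolution. Once the orbit-by-orbit bookkeeping of Lemma \ref{lemma:prod} is threaded through the BGG cancellation, the two occurrences match up coherently and the displayed formula follows; everything else is a direct transcription of the classical argument.
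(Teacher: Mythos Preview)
Your route via Verma modules and a $\theta$-equivariant BGG resolution differs from the paper's, which instead follows Kostant: it forms the cochain complex $C^j=\Lambda^j\n'\otimes V_\lambda$ of ${}^LT$-modules, identifies its Euler characteristic with the twisted character on the $\n'$-cohomology of $V_\lambda$, and invokes the explicit (twisted) Kostant computation of that cohomology to read off $\tilde\tau_\lambda=J(e^\lambda)$ directly. The factorization $\tilde\tau_\lambda=\tilde\chi_1\,\tau_\lambda$ together with the identity $\tilde\chi_1\,P(E^{-1},1)=1$ then finishes.

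Your argument, however, contains a concrete error. The assertion that ``the sign $(-1)^{\ell(w)}$ computed in $W$ agrees with the sign for the Coxeter length in $W^\theta$ on elements of $W^\theta$'' is false. Take $\hat G$ of type $A_1\times A_1$ with $\theta$ swapping the factors: the nontrivial element of $W^\theta$ is $s_1s_2$, which has $W$-length $2$ (hence $W$-sign $+1$) but is a simple reflection in $W^\theta$ (sign $-1$). In general the simple reflection of $W^\theta$ attached to an orbit $[\alpha]$ is the longest element of $W(M_{[\alpha]})$, whose $W$-sign is $(-1)^b$ for diagram $A_1$ and $(-1)^{3b}=(-1)^b$ for diagram $A_2$; so the discrepancy already appears for $A_1$-type orbits of even cardinality, not only in the $A_2$ situation your final paragraph singles out.

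What actually repairs the BGG computation is the scalar $\epsilon_w$ by which $\theta$ acts on the highest-weight line of $M(w\bullet\lambda)$ inside the equivariant resolution: that action is not $1$ in general, and the identity one needs is $(-1)^{\ell_W(w)}\epsilon_w=(-1)^{\ell_{W^\theta}(w)}$. Verifying this is essentially the twisted analogue of Kostant's cohomology calculation (this is where \cite{kumar2009characters} and \cite{wendt2001weyl} do real work), so the BGG packaging does not sidestep that step; you would still have to carry it out rather than assert that the signs ``match up coherently.''
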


The Weyl denominator $P(E^{-1},1)$ is computed in Corollary
\ref{cor:prod1}.  If we take $\lambda=0$, then $\tau_\lambda=1$, and
the Weyl character formula gives a second formula for the Weyl
denominator:
\begin{equation}\label{eqn:wd2}
1= J(1) P(E^{-1},1).
\end{equation}
It is a remarkable consequence of the Weyl character formula that the
twisted character $\tau_\lambda$ is identical to the irreducible
character of $\hat G_\theta$ with highest weight $\lambda$.

\begin{proof} 
  We follow Kostant \cite{kostant1961lie}.  Let $\n$ be the Lie
  algebra of $\hat N$, considered as a module of ${}^L\hat T=\hat
  T\rtimes\ang{\theta}$ by the adjoint representation, and let $\n'$
  be its contragredient.  We write $\chi_j$ for the character of the
  exterior power $\Lambda^j \n'$.  We write $\tilde\chi_q = \sum_j
  (-q)^j\chi_j$ for the $q$-graded virtual character on the sum of
  $\Lambda^j \n'$, with grading $(-q)^j$ on the $j$th summand.

  The character $\tilde\chi_q$ evaluated at $\theta t$ depends only on
  the image $s\in\hat S$ of $t\in \hat T$.  We have
\[
P(E^{-1},q)^{-1}(s) = \det(1- q\theta t;\n') 
= \sum_j (-q)^j \chi_j(t\theta) = \tilde\chi_q(t\theta).
\]
The sum is obtained from the determinant by picking a basis of
eigenvectors of $\theta t$ on $\n'$ and expanding into a polynomial in
$q$.  We have $E^{-1}$ rather than $E$ because $\n'$ is the
contragredient of $\n$.  This gives
\begin{equation}\label{eqn:tilde}
\tilde\chi_q P(E^{-1},q) = 1.
\end{equation}

Upon specialization to $q=1$, the spaces $C^j=\Lambda^j \n'\otimes
V_\lambda$ are the terms of a cochain complex of ${}^LT$-modules.  We
consider the virtual character $\tilde \tau_\lambda$ on the sum of
$C^j$, with grading $(-1)^j$ on $C^j$.  By an Euler-Poincar\'e
argument, $\tilde\tau_\lambda$ equals the character on the cohomology
of the complex.  The cohomology has been computed explicitly
\cite{kostant1961lie}.  These computations show that for each $w\in
W^\theta$, the weight $e^{w\bullet \lambda}$ occurs once in cohomology
with sign $(-1)^{\ell w}$.  Thus, in terms of the operator $J$, we
have
\[
\tilde \tau_\lambda = J(e^\lambda) 
= \sum_{w\in W^\theta} (-1)^{\ell(w)} e^{w\bullet\lambda}.
\]

By the description of $C^j$ as a tensor product, we have a product
decomposition $\tilde \tau_\lambda = \tilde \chi_{1}\tau_\lambda $
Multiplying both sides by $P(E^{-1},1)$, and using Equation
\ref{eqn:tilde}, we obtain the twisted Weyl character formula
\begin{equation}
\tau_\lambda = J(e^\lambda) P(E^{-1},1).
\end{equation}
\end{proof}

It is natural to extend $\tau_\lambda$ to a $q$-character by defining
$\tau_{\lambda,q}$ by $\tilde \tau_\lambda = \tilde
\chi_q\tau_{\lambda,q} $, so that
\begin{equation}\label{eqn:tauq}
\tau_{\lambda,q} = J(e^\lambda) P(E^{-1},q).
\end{equation}
We leave it as a research problem to find interpretations of
$\tau_{\lambda,q}$, along the lines of Kazhdan-Lusztig polynomials.
Kato and Lusztig give an answer when $\theta=1$.

\subsection{Macdonald's formula}\label{sec:macdonald}

Let $K$ be a hyperspecial maximal compact subgroup of $G$.  The
spherical Hecke algebra $\H(G//K)$ is the convolution
algebra of all complex-valued compactly support $K$-biinvariant
functions on $G$.  It has a linear basis consisting of characteristic
functions $f_\mu$ of double cosets $K\varpi^\mu K$, for $\mu\in P^+$.
By the Satake isomorphism, $\H(G//K)$ is isomorphic to the algebra
$\ring{C}[Y^*]^{W^\theta}$.  We write $\hat f_\mu\in \ring{C}[Y^*]$
for the image of $f_\mu$ under the Satake isomorphism.

We continue in the context of a complex group $\hat G \rtimes
\ang{\theta}$ and keep earlier notation.  As usual, we identify
elements of $\ring{C}[Y^*]$ with functions on $\hat S$.  As before
$\Phi=\Psi^\vee$ and $\Psi = \Phi^\vee$ are dual root systems.  Let
$\rho^\vee = \rho(\Phi^+)\in X_*(\hat T)$.

For each subset $S$ of the set $\Delta_1=\Delta(\Psi^{red})$ of simple
roots in $\Psi^{red,+}$, let $W_S\le W^\theta$ be the group generated
by the reflections in $S$.  Let $\ell' w$ be the length of $w\in W$
(as a function of the Weyl group $W$, and not the Weyl group
$W^\theta$).  Let $Q_S(q^{-1}) = \sum_{w\in W_S} q^{-\ell'w}$.  We
abbreviate $Q(q^{-1}) = Q_{\Delta_1}(q^{-1})$.  For each $\mu\in
P^+\subset Y^*$, let $S(\mu)$ be the subset of $\Delta_1$ such that
$\ang{\mu,\alpha^\vee}=0$ iff $\alpha\in S(\mu)$.

\begin{theorem}[Macdonald's formula]\label{thm:macdonald}
  Let $G$ be an unramified $p$-adic reductive group with $L$-group
  ${}^LG = \hat G \rtimes \ang{\theta}$.  For each $\mu\in P^+$, let
  $\hat f_\mu$ be the Satake transform of the characteristic function
  of $K\varpi^\mu K$, viewed as an element of
  $\ring{C}[Y^*]^{W^\theta}$.  Then
\[
\hat f_\mu = \frac{q^{\ang{\mu,\rho^\vee}}}{ Q_{S(\mu)}(q^{-1})} 
\sum_{w\in W^\theta} e^{w\mu} \frac{P(E^{-w},1)}{P(E^{-w},q^{-1})}.
\]
\end{theorem}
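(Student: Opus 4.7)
I would adapt the classical Casselman--Macdonald proof to the unramified setting, reducing the Satake transform $\hat f_\mu$ to a sum over $W^\theta$ of products of rank-one local integrals, and then matching each rank-one factor with $d_\alpha(q^{-1})/d_\alpha(1)$ from Lemma~\ref{lemma:prod} so that the product over $\alpha \in \Psi^{red,+}$ assembles into $P(E^{-w},1)/P(E^{-w},q^{-1})$.

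First I would evaluate $\hat f_\mu$ at an unramified character $\chi_s$ ($s \in \hat S$) using the standard integral formula for the Satake transform, then apply the Iwasawa decomposition of $G$ relative to a Borel $B = TN$ and the hyperspecial $K$, organized by $W^\theta$. The result is
\[
\hat f_\mu(s) = \delta_B(\varpi^\mu)^{1/2} \sum_{w \in W^\theta} \chi_s(\varpi^{w\mu})\, I_w(s),
\]
where $\delta_B(\varpi^\mu)^{1/2} = q^{\langle\mu,\rho^\vee\rangle}$ and each $I_w(s)$ is an integral over the corresponding cell of the unipotent radical $N$. Using that $N$ factors through the restricted root subgroups $N_{[\beta]}$, one may write $I_w(s)$ as a product indexed by $\alpha = [\beta^\vee]^\vee \in \Psi^{red,+}$.

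For each restricted root $\alpha$, the local factor is a rank-one Macdonald-type integral inside the quasi-split Levi $M_{[\beta^\vee]}$, which is of diagram type either $A_1$ (a $\mathrm{GL}_2$-like factor over the degree-$b$ unramified extension) or $A_2$ (an unramified $\mathrm{SU}(3)$-like factor). The $A_1$ case evaluates to $(1 - q^{-b}e^{-w\alpha})/(1 - e^{-w\alpha})$, and the classical unitary rank-one computation in the $A_2$ case evaluates to $(1 - q^{-2b}e^{-w\alpha/2})(1 + q^{-b}e^{-w\alpha/2})/(1 - e^{-w\alpha})$. In view of Equation~\eqref{eqn:d}, each is exactly $d_\alpha(q^{-1})/d_\alpha(1)$ evaluated at $-w\alpha$, so taking the product over $\alpha$ and invoking Lemma~\ref{lemma:prod} twice gives
\[
I_w(s) = \frac{P(E^{-w},1)}{P(E^{-w},q^{-1})}(s).
\]

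It remains to account for the denominator $Q_{S(\mu)}(q^{-1})$, which arises from the measure-theoretic normalization: the volume of $K\varpi^\mu K$ relative to $K$ is governed by a Poincar\'e polynomial of $W^\theta$ normalized by that of the stabilizer of $\mu$ in $W^\theta$, which is $W_{S(\mu)}$. The use of the $W$-length $\ell'$ (rather than the $W^\theta$-length $\ell$) reflects the fact that each simple reflection of $W^\theta$ is the longest element of a rank-one Weyl subgroup of $W$, of $W$-length $b$ in the $A_1$ case and $3b$ in the $A_2$ case, and these are the total $W$-lengths absorbed into the rank-one integrals. I expect the main obstacle to be the $A_2$ rank-one computation together with the careful bookkeeping of $b$-exponents and $W$- versus $W^\theta$-lengths needed to match the rank-one factors against Lemma~\ref{lemma:prod} on the nose; once that matching is in place, the rest of the argument parallels the split Macdonald proof.
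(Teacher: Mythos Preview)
Your route is genuinely different from the paper's, and also much harder. The paper does \emph{not} re-derive Macdonald's formula; it simply quotes the traditional Macdonald--Casselman formula for $\hat f_\mu$ (already known for arbitrary unramified groups, written in terms of the $c$-functions $c_\alpha$ and the constants $\chi_s(a_\alpha)$), and then translates notation: one checks that the traditional rank-one factor attached to $\alpha_1\in\Psi^{red,+}$ equals $d_{\alpha_1}(q^{-1})/d_{\alpha_1}(1)$, so that by Lemma~\ref{lemma:prod} the product $\gamma=\prod_{\alpha_1} c_{\alpha_1}$ is exactly $P(E^{-1},1)/P(E^{-1},q^{-1})$. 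Substituting this into the standard formula gives the stated expression, and the identification $Q_S(q^{-1})=\sum_{w\in W_S} q^{-\ell'w}$ is read off from $q^{\ell'w}=\card(IwI/I)$. In other words, the only new content in the paper's proof is the bookkeeping you isolate at the end (matching $d_\alpha$ to the classical constants, and explaining why the absolute length $\ell'$ appears); everything else is cited.

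Your proposal, by contrast, is to reprove the classical formula from scratch. That is a legitimate (and more self-contained) program, but your first step has a gap. The displayed identity
\[
\hat f_\mu(s) = \delta_B(\varpi^\mu)^{1/2} \sum_{w \in W^\theta} \chi_s(\varpi^{w\mu})\, I_w(s)
\]
does not follow from the Iwasawa decomposition ``organized by $W^\theta$.'' The Iwasawa decomposition $G=NTK$ contains no Weyl group; the Satake transform is a single integral over $N$, and the appearance of the sum over $W^\theta$ with factorized coefficients is the nontrivial content of Macdonald's theorem. In Casselman's approach it comes from the asymptotics of the spherical function via the Jacquet module and the explicit computation of intertwining operators (whose Gindikin--Karpelevich factorization is what actually produces the rank-one products you want); in Macdonald's approach it comes from a recursion on the building. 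Either way, you need that machinery before the rank-one integrals $I_w$ even make sense as products independent of $\mu$. If you supply that input, the rest of your outline (the $A_1$/$A_2$ rank-one computations and the match with $d_\alpha$) is exactly the translation step the paper carries out.
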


The partition function encodes the constants $q_\alpha$,
$q_{\alpha/2}$ that occur in the traditional formula
\cite{macdonaldspherical}.  Our formula is closely tied to the root
system $\Psi^{red}$ of $\hat G_\theta$ that occurs in the twisted Weyl
character formula for ${}^LG$.  A variant of Macdonald's formula of
this form was previously known when $G$ is split ($\theta=1$).

There is a related formula for the spherical function
$\Gamma:P^+\times\hat S\to\ring{C}$ that we mention.  For every
$\mu\in P^+$,
\begin{equation} 
\Gamma_\mu = 
\frac{q^{-\ang{\mu,\rho^\vee}}}{ Q(q^{-1})} 
\sum_{w\in W^\theta} e^{w\mu} \frac{P(E^{-w},1)}{P(E^{-w},q^{-1})}.
\end{equation}

\begin{proof}[Proof of Macdonald's formula]
  Our proof will relate the formula in the theorem to the standard
  form of Macdonald's formula.  Macdonald's formula is elaborated in
  \cite{casselman1980unramified} and \cite{casselman2005companion}.
  In the following discussion, we index terms by
  $\alpha\in\Phi_{red}$ (or $\alpha^\vee\in \Psi^{red} =
  (\Phi_{red})^\vee$), although terms in Macdonald's formula are
  traditionally indexed by $\Phi^{red}$, the set of nonmultipliable
  roots in $\Phi_{res}$.  This is a harmless change, because
  $\Phi_{red}$ is in natural bijection with $\Phi^{red}$ by
  sending each indivisible root $\alpha$ to the homothetic root
  $k\alpha\in\Phi^{red}$.

  Recall that for each $\alpha\in \Phi^+_{red}$, associated with an
  orbit $\alpha=[\beta]$ in $\Phi$, there is a diagram type $A_k$, for
  some $k\in\{1,2\}$, and cardinality $b$ (the number of connected
  components of the Dynkin diagram of $M_{[\beta]}$).  Casselman and
  Macdonald construct an element $a_{\alpha}\in T$, for each
  $\alpha\in\Phi_{red}$.  Let $\alpha_1 = \alpha^\vee\in \Psi^{red}$
  be its coroot.  By the explicit formulas in
  \cite{casselman2005companion}, for diagram type $A_k$, we have
  $a_\alpha = (k\alpha)^\vee(\varpi) = (\alpha_1/k)(\varpi)$.  Let
  $s\in \hat S$, and let $\chi_s\in\op{Hom}(T,\ring{C}^\times)$ be the
  associated unramified character.  Then
\[
\chi_s(a_{\alpha}) = \chi_s((\alpha_1/k)(\varpi)) = e^{\alpha_1/k}(s).
\]
Macdonald's formula is traditionally expressed in terms of the
constants $\chi_s(a_\alpha)$, which we rewrite in terms of
$e^{\alpha_1/k}$, for $\alpha_1\in\Psi^{red,+}$.

For each $\alpha_1\in\Psi^{red,+}$, we define a function
$c_{\alpha_1}:\hat S\to\ring{C}$ by
\[
c_{\alpha_1}(s) = d_{\alpha_1}(q^{-1})/d_{\alpha_1}(1),
\]
following the definition of $d_{\alpha_1}$ in Equation \ref{eqn:d}.
We define a function $\gamma:\hat S\to \ring{C}$ by
\[
\gamma(s)  = \prod_{\alpha_1\in \Psi^{red,+}} c_{\alpha_1}(s^{-1}).
\]
It follows from Lemma \ref{lemma:prod} that
\begin{equation}\label{eqn:gamma}
\gamma = P(E^{-1},1)/P(E^{-1},q^{-1}).
\end{equation}

The traditional Macdonald formula is expressed as a sum of $\gamma$
over $W^\theta$.  If we use Equation (\ref{eqn:gamma}) to substitute
for $\gamma$ in the traditional formula, then Theorem
\ref{thm:macdonald} is the result.

The formula for $Q_S(q^{-1})$ relies on the observation that $q^{\ell'
  w} = \op{card}(IwI/I)$, where $I$ is an Iwahori subgroup, and the
length is computed with respect to the absolute Weyl group $W$
\cite[p.74]{carter1985finite}.
\end{proof}

\subsection{Plancherel measure}
We continue in the same context, letting $G$ be an unramified
reductive group and $K$ a hyperspecial maximal compact subgroup.  Let
${}^LG = \hat G\rtimes \ang{\theta}$, and we continue with notation
from previous sections.

Let ${\hat S}_1$ be the maximal compact subgroup of $\hat S$.  Let $ds$
be the Haar measure on $\hat S_1$ normalized so that $\hat S_1$ has
volume $1$.  Let $(\cdot,\cdot)$ be the inner product with respect to
the Haar measure on $\hat S_1$. That is,
\begin{equation}
(f_1,f_2) = \int_{\hat S_1} f_1(s) \bar f_2(s) ds.
\end{equation}
Multiplicative characters of $\hat S_1$ are orthonormal:
$(e^\lambda,e^\mu) = \delta_{\lambda,\mu}$.

We define a measure on $\hat S_1$ by
\begin{equation}
dm(s) = \frac{Q(q^{-1})}{\op{card}(W^\theta)}
\frac{P(E,q^{-1}) P(E^{-1},q^{-1})}{P(E,1) P(E^{-1},1)} ds.
\end{equation}
It will be checked below that the partition functions defining $dm(s)$
have nonzero denominators on $\hat S_1$.  Let $\ang{\cdot,\cdot}$ be
the pairing provided by this measure on continuous functions on $\hat
S_1$.  That is,
\begin{equation}
\ang{f_1,f_2} = \int_{\hat S_1} f_1(s) \bar f_2(s) dm(s).
\end{equation}

The proof of the Plancherel measure uses the following averaging lemma.

\begin{lemma}\label{lemma:average} 
  Let $f$ be continuous on $\hat S_1$ and $W^\theta$-invariant.  Then
\[
\ang{f,\hat f_\mu} = c_\mu  (f, e^\mu\frac{ P(E,q^{-1})}{P(E,1)}),
\quad\text{where }\quad
c_\mu = q^{\ang{\mu,\rho^\vee}}\frac{ Q(q^{-1})}{Q_{S(\mu)}(q^{-1})}.
\]
\end{lemma}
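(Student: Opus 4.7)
The plan is to unfold $\ang{f,\hat f_\mu}$ using Macdonald's formula for $\hat f_\mu$ (Theorem \ref{thm:macdonald}) and the explicit density of $dm(s)$, then use Corollary \ref{cor:weyl-p} together with the $W^\theta$-invariance of $f$ and $ds$ to collapse the resulting sum over $W^\theta$ to a single integral identifiable as $c_\mu(f, e^\mu P(E,q^{-1})/P(E,1))$.

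First, I would substitute Macdonald's formula for $\hat f_\mu$ and the definition of $dm(s)$ into $\ang{f,\hat f_\mu} = \int_{\hat S_1} f(s)\,\overline{\hat f_\mu(s)}\,dm(s)$. Because $q$ is a positive integer and $\hat S_1$ is compact, complex conjugation acts on characters of $\hat S_1$ by $e^\lambda\mapsto e^{-\lambda}$; applied to the factorization of Lemma \ref{lemma:fact} (whose set of roots of unity $\zeta$ is closed under conjugation), this yields $\overline{P(E^{\pm w},q)} = P(E^{\mp w},q)$ on $\hat S_1$, so the denominators of $dm$ are nonvanishing. In the expansion of $\overline{\hat f_\mu}$, this converts the summand factor $P(E^{-w},1)/P(E^{-w},q^{-1})$ into $P(E^{w},1)/P(E^{w},q^{-1})$.

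The crucial algebraic step is next: rewrite the density of $dm$ in a $w$-dependent form using Corollary \ref{cor:weyl-p},
\[
\frac{P(E,q^{-1})P(E^{-1},q^{-1})}{P(E,1)P(E^{-1},1)}
= \frac{P(E^{w},q^{-1})P(E^{-w},q^{-1})}{P(E^{w},1)P(E^{-w},1)},\qquad w\in W^\theta.
\]
Inserting this $w$-dependent expression inside the Macdonald sum and cancelling the $P(E^w,\cdot)$ factors reduces the integrand of the $w$-th term, up to the scalar $c_\mu/|W^\theta|$, to $f(s)\,e^{-w\mu}(s)\,P(E^{-w},q^{-1})(s)/P(E^{-w},1)(s)$.

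Finally, from the definition of $E^w$ and from the explicit factorization of $P$ in Lemma \ref{lemma:fact}, one checks that $e^{-w\mu}\,P(E^{-w},q^{-1})/P(E^{-w},1)$ is the $w$-translate of $e^{-\mu}\,P(E^{-1},q^{-1})/P(E^{-1},1)$ under the natural Weyl action of $W^\theta$ on $\hat S_1$ (which descends from the action on $\hat T$ and preserves the normalized Haar measure $ds$). Because $f$ is $W^\theta$-invariant and $ds$ is Weyl-invariant, all $|W^\theta|$ summands give the same integral; the factor $|W^\theta|$ cancels, and re-applying the conjugation identity from the first step identifies the remaining integral as $c_\mu(f,\,e^\mu P(E,q^{-1})/P(E,1))$. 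The main obstacle is just the bookkeeping of conjugations on the compact torus and the matching of the $w$-rotated Plancherel density against the $w$-th Macdonald summand; no ingredients beyond Theorem \ref{thm:macdonald}, Corollary \ref{cor:weyl-p}, and Lemma \ref{lemma:fact} are needed.
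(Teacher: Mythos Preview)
Your proposal is correct and follows essentially the same approach as the paper. The only organizational difference is that the paper first records the $W^\theta$-invariance of $dm$ (an immediate consequence of Corollary~\ref{cor:weyl-p}) and then uses the abstract identity $\ang{f,\sum_{w} w(f_1)} = \card(W^\theta)\,\ang{f,f_1}$ to collapse Macdonald's sum to its $w=1$ term, whereas you carry out the same cancellation term by term inside the sum; the ingredients and the computation are identical.
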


\begin{proof}  
  The measure $dm(s)$ is $W^\theta$-invariant by Corollary
  \ref{cor:weyl-p}.  As a consequence, we may push a sum over
  $W^\theta$ over to $f$ to obtain
\begin{equation}\label{eqn:inv}
\ang{f,\sum_{w\in W^\theta} w(f_1)} = \card(W^\theta)\ang{f,f_1},
\end{equation}
for any continuous function $f_1$ on $\hat S_1$.
In particular, assume that $f_1=\hat f_\mu$, which Macdonald's
formula presents a sum over $W^\theta$.
This means that in $\ang{f,\hat f_\mu}$, we may replace 
$\hat f_\mu$  with
the $w=1$ term  in Macdonald's formula.

The constant $c_\mu$ is the product of the constants appearing in
Macdonald's formula, the Plancherel measure, and Equation
\ref{eqn:inv}:
\[
c_\mu = \left(\frac{q^{\ang{\mu,\rho^\vee}}}{Q_S(q^{-1})}\right)
\left(\frac{Q(q^{-1})}{\op{card}(W^\theta)}\right) \op{card}(W^\theta).
\]

The conjugate of $e^\mu$ is $e^{-\mu}$ and of $E$ is $E^{-1}$ on $\hat
S_1$, because $\bar s = s^{-1}$, for $s\in \hat S_1$.  We have
\begin{align*}
\ang{f,\hat f_\mu} &=
c_\mu\int_{\hat S_1} f \left(e^{-\mu}
                     \frac{P(E,1)}{P(E,q^{-1})}\right) 
\left(\frac{P(E,q^{-1}) P(E^{-1},q^{-1})}{P(E,1)(E^{-1},1)}\right) ds\\
&=
c_\mu\int_{\hat S_1} f \left(e^{-\mu} 
\frac{P(E^{-1},q^{-1})}{P(E^{-1},1)}\right) ds\\
&=
c_\mu (f, e^\mu\frac{ P(E,q^{-1})}{P(E,1)}).
\end{align*}
\end{proof}

\begin{theorem}[Plancherel measure]
  The denominators are nonzero on $\hat S_1$ in the partition
  functions defining $dm(s)$.  The Plancherel measure is supported on
  $\hat S_1$ and is given explicitly by $dm(s)$ on $\hat S_1$.
\end{theorem}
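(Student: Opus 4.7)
First, I would verify that the denominators of $dm(s)$ do not vanish on $\hat S_1$. By Lemmas \ref{lemma:fact} and \ref{lemma:prod}, the determinants $\det(1 - q^{-1}\theta E;\n)$ and $\det(1 - q^{-1}\theta E^{-1};\n)$, whose reciprocals are $P(E,q^{-1})$ and $P(E^{-1},q^{-1})$, factor into terms of the form $1-\zeta q^{-b} e^\alpha$ with $|\zeta|=1$, $b\ge 1$, and $\alpha$ a root. On $\hat S_1$ one has $|e^\alpha(s)|=1$, and since $q>1$ and $b\ge 1$, each factor satisfies $|1-\zeta q^{-b}e^\alpha(s)|\ge 1-q^{-b}>0$, so the full products are nonzero there.

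To identify $dm(s)$ as the Plancherel measure, the plan is to verify the Plancherel identity on the basis $\{f_\mu:\mu\in\dom\}$ of $\H(G//K)$, namely
\[
\int_G f_\lambda(g)\,\overline{f_\mu(g)}\,dg \;=\; \ang{\hat f_\lambda,\hat f_\mu}.
\]
The left side equals $\delta_{\lambda,\mu}\op{vol}(K\varpi^\mu K)$, and the classical volume formula for unramified groups gives $\op{vol}(K\varpi^\mu K)=q^{\ang{\mu,2\rho^\vee}} Q(q^{-1})/Q_{S(\mu)}(q^{-1})\cdot\op{vol}(K)$. So the problem reduces to proving $\ang{\hat f_\lambda,\hat f_\mu}=\delta_{\lambda,\mu}\op{vol}(K\varpi^\mu K)$.

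The main computation starts by applying Lemma \ref{lemma:average} to the $W^\theta$-invariant function $\hat f_\lambda$, yielding
\[
\ang{\hat f_\lambda,\hat f_\mu} \;=\; c_\mu\bigl(\hat f_\lambda,\;e^\mu P(E,q^{-1})/P(E,1)\bigr).
\]
One then expands $\hat f_\lambda$ by Macdonald's formula (Theorem \ref{thm:macdonald}) as a $W^\theta$-sum of terms $e^{w\lambda}P(E^{-w},1)/P(E^{-w},q^{-1})$. Using that on $\hat S_1$ the conjugate of $e^\mu P(E,q^{-1})/P(E,1)$ is $e^{-\mu}P(E^{-1},q^{-1})/P(E^{-1},1)$, each summand becomes a monomial times a ratio of partition functions which, after the substitution $s\mapsto ws$ and application of Corollary \ref{cor:weyl-p}, collapses to a common $W^\theta$-invariant rational factor $R(s)$ against a monomial $e^{\lambda-w^{-1}\mu}$. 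Orthonormality $(e^\nu,e^{\nu'})=\delta_{\nu,\nu'}$ on $\hat S_1$ together with the dominance of $\lambda,\mu$ then forces $\lambda=\mu$, and matching the surviving constants to the volume formula gives the desired identity.

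The support claim follows from the standard fact that the spherical tempered dual of $G$ is parameterized by $\hat S_1/W^\theta$ (unitary unramified characters of $T$); the spherical Plancherel measure, characterized by the identity above, is therefore supported on $\hat S_1$ and is uniquely determined by $dm(s)$ on that subset. I expect the main obstacle to be the combinatorial bookkeeping inside the integral: carefully using Lemma \ref{lemma:prod} to pair off factors between the ratios $P(E^{\pm w},q^{-1})/P(E^{\pm w},1)$ and the density $P(E,q^{-1})P(E^{-1},q^{-1})/(P(E,1)P(E^{-1},1))$, and tracking the signs and exponents coming from diagram types $A_1$ and $A_2$ in each $\theta$-orbit of roots. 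The split case is classical (Macdonald), but extending it uniformly to non-connected ${}^LG$ requires that the factor $P(E,1)$ and the $W^\theta$-structure play together correctly, which is precisely what the partition-function formalism of Section \ref{sec:adjoint} is designed to enable.
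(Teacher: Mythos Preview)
Your nonvanishing argument for the denominators is correct and in fact cleaner than the paper's; and your first step, applying Lemma~\ref{lemma:average} to reach
\[
\ang{\hat f_\lambda,\hat f_\mu}=c_\mu\bigl(\hat f_\lambda,\,e^\mu P(E,q^{-1})/P(E,1)\bigr),
\]
is exactly what the paper does. The gap is in what you do next. After substituting $s\mapsto ws$ in the $w$-term of Macdonald's formula, the partition-function ratio becomes
\[
\frac{P(E^{-1},1)}{P(E^{-1},q^{-1})}\cdot\frac{P(E^{-w^{-1}},q^{-1})}{P(E^{-w^{-1}},1)},
\]
and this does \emph{not} collapse to a $w$-independent factor. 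Corollary~\ref{cor:weyl-p} controls the product $P(E^w,q)P(E^{-w},q)$, not a single $P(E^{-w^{-1}},\cdot)$, so there is no cancellation of the kind you describe. Moreover, individual Macdonald summands have poles on $\hat S_1$ (coming from $D(E^{-w},1)=P(E^{-w},1)^{-1}$ vanishing where $e^{\alpha}(s)=1$), so term-by-term manipulation is delicate. And even if a common $R(s)$ existed, it would not be constant, so orthonormality of the $e^\nu$ alone cannot force $\lambda=\mu$; you would still have to expand $R(s)$.

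The paper avoids all of this with a triangularity argument. Choose a total order on $P^+$ compatible with adding norms of positive roots. Expand the absolutely convergent series
\[
e^\mu\,\frac{P(E,q^{-1})}{P(E,1)}=e^\mu+\sum_{\mu<\mu'}a_{\mu'}e^{\mu'},
\]
using the factorization of Lemma~\ref{lemma:prod} and the geometric series $(1-t)/(1-q^{-b}t)=1+(q^{-b}-1)t\sum_{i\ge0}q^{-ib}t^i$; and expand $\hat f_\lambda$ not by Macdonald's formula but in the $m$-basis,
\[
\hat f_\lambda=q^{\ang{\lambda,\rho^\vee}}m_\lambda+\sum_{\lambda'<\lambda}s_{\lambda,\lambda'}m_{\lambda'}.
\]
Since $(m_{\lambda'},e^{\mu'})=\delta_{\lambda',\mu'}$ for $\lambda',\mu'\in P^+$, pairing the two expansions and assuming without loss $\lambda\le\mu$ leaves only the diagonal term $c_\mu q^{\ang{\mu,\rho^\vee}}\delta_{\lambda,\mu}$, which matches the volume of $K\varpi^\mu K$. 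The missing idea in your sketch is precisely this opposite-triangularity trick; the Macdonald expansion of $\hat f_\lambda$ is the wrong basis for this step.
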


\begin{remark}
  We recall the defining property of the Plancherel measure for the
  spherical Hecke algebra.  The Plancherel measure is $dm(s)$ if for
  all $f_1,f_2\in \H(G//K)$,
\[
\int_G f_1(g) \bar f_2 (g) dg = \int_{\hat S} \hat f_1(s) \bar {\hat f}_2 (s) dm(s).
\]
When $\mu\ne\lambda$, the integral on the left is trivial to compute
for the functions $f_1 = f_\lambda$ and $f_2 = f_\mu$ because the
functions $f_\mu$ and $f_\lambda$ have disjoint support, giving $0$.
When $\mu=\lambda$, the integral on the left is the volume of
$K\varpi^\mu K$.  This volume is $c_\mu q^{\ang{\mu,\rho^\vee}}$ by
\cite{casselman2005companion}, where $c_\mu$ is the constant in Lemma
\ref{lemma:average}.  The proof of the theorem proceeds by computing
the inner products $\ang{\hat f_\lambda,\hat f_\mu}$ and showing that
they give the same values as the integral on the left.
\end{remark}

\begin{proof}  
  The proof, which we review, is due to Macdonald
  \cite[Ch.V]{macdonaldspherical}.  It is what he calls {\it the
    standard case}.  Choose any total order $(<)$ on $P^+\subset Y^*$
  such that $\lambda < \lambda + \alpha$, whenever $\alpha = N\beta$
  is the norm of a positive root $\beta\in\Psi^+$.

  By Lemma \ref{lemma:prod}, the ratio $P(E,q^{-1})/P(E,1)$ factors
  into a product of terms of the form $(1- t)/(1- q^{-b} t)$, where
  each $t = \epsilon e^{N\beta} = \epsilon e^\alpha$ for some root
  $\beta$ with norm $\alpha$, for some sign $\epsilon\in \{\pm 1\}$,
  and for some $b\ge 1$.  For any $p$-adic field $F$, we have $q = q_F
  > 1$ and $q^{-b} < 1$.  Thus we have an absolutely convergent
  expansion in $t$:
\begin{equation}\label{eqn:t}
\frac{1- t}{1- q^{-b} t} = 1 + (q^{-b}-1) t (1+ q^{-b} t + q^{-2b} t^2 + \cdots),
\end{equation}
noting that $|t| = |\alpha(s)|=1$ at each $s\in \hat S_1$.  In
particular, the denominator of $P(E,q^{-1})/P(E,1)$ does not vanish.
Similarly, the denominator of $P(E^{-1},q^{-1})/P(E^{-1},1)$ does not
vanish because $|q^{-b} t^{-1}|\ne1$, giving the nonvanishing of the
denominator in the measure $dm(s)$.

By multiplying the series expansions (Equation \ref{eqn:t}) associated
with each factor of $P(E,q^{-1})/P(E,1)$, it follows that for each
$\mu\in P^+$ we have an absolutely convergent expansion of the form
\[
e^\mu \frac{P(E,q^{-1})}{P(E,1)} = e^\mu +\sum_{\mu< \mu'} a_{\mu'} e^{\mu'},
\] 
for some coefficients $a_{\mu'}$ that turn out not to matter.

We compute $\ang{\hat f_\lambda,\hat f_\mu}$.  We may assume without
loss of generality that $\lambda \le \mu$.

We have a finite expansion (see Equation \ref{eqn:s} below):
\[
\hat f_\lambda = q^{\ang{\lambda,\rho^\vee}} m_\lambda 
+ \sum_{\lambda' <\lambda } s_{\lambda,\lambda'} m_{\lambda'}.
\]
Also, for $\lambda',\mu'\in P^+$, we have
\[
(m_{\lambda'},e^{\mu'}) = \delta_{\lambda',{\mu'}}.
\]
The function $\hat f_\lambda$ is $W^\theta$-invariant, which justifies
the use of the averaging lemma (Lemma \ref{lemma:average}) to simplify
the inner product.  Invoking the averaging lemma, expanding
everything, and integrating term by term, we have
\begin{align*}
\ang{\hat f_\lambda,\hat f_\mu} 
&= c_\mu (\hat f_\lambda, e^\mu \frac{ P(E,q^{-1})}{P(E,1)})\\
&= c_\mu (q^{\ang{\lambda,\rho^\vee}} m_\lambda,e^\mu) 
+ \sum_{\lambda' < \lambda\le\mu < \mu'}
c_\mu s_{\lambda',\lambda} a_{\mu'} (m_{\lambda'},e^{\mu'})\\ 
&= c_\mu q^{\ang{\mu,\rho^\vee}} \delta_{\lambda,\mu}.
\end{align*}

Comparing this inner product with the inner products in the remark, we
see that the proof is complete.
\end{proof}

\subsection{inverting weight multiplicities}

This section follows van Leeuwen's algorithm to invert the weight
multiplicity matrix~\cite{vanleeuwen}.  For type $A_n$, van Leeuwen's
formula agrees with the inverse of the Kostka matrix described in
\cite{duan}.

We have two bases of $\ring{C}[Y^*]^{W^\theta}$, given by
$\{m_\lambda\}$ and $\{\tau_\lambda\}$, indexed by $\lambda\in \dom$.
The change of basis matrix expressing $\tau_\lambda$ in terms of
$m_\mu$ is the weight multiplicity matrix $m_{\lambda,\mu} =
(\tau_\lambda,e^\mu)$.  In the reverse direction, for $\mu\in P^+$, we
have a change of basis matrix $n_{\mu,\lambda}$
\begin{equation}\label{eqn:n}
m_\mu = \sum_{\lambda} n_{\mu,\lambda} \tau_\lambda,
\end{equation}
with $\mu,\lambda\in P^+$.
This section gives a formula for $n_{\mu,\lambda}$.  

We have a set $Y^*_0\subseteq Y^*$ of characters $\lambda$ such that
$\lambda$ is fixed (that is, $w\bullet \lambda = \lambda$) by some
reflection $w\in W^\theta$.  For each $w\in W^\theta$, we define
\begin{equation}
Y^*_w = \{\lambda\in Y^*\mid w\bullet \lambda \in\dom\}.
\end{equation}
These sets partition $Y^*$, so that each $\lambda\in Y^*$ belongs to a
unique $Y^*_x$, for $x\in W^\theta\cup\{0\}$.  Let $e_w$ be the
characteristic function of $Y^*_w$.

Recall (from Section \ref{sec:weyl-char}) that we have defined an
operator $J$ that has the property $J(f) = f J(1)$ if $f\in
\ring{C}[Y^*]^{W^\theta}$.  In particular, using the Weyl denominator
formula (\ref{eqn:wd2}), we find that
\begin{equation}
 J(\tau_\lambda) = \tau_\lambda J(1) 
= J(e^\lambda) P(E^{-1},1) J(1) = J(e^\lambda),\quad  \lambda\in P^+.
\end{equation}
In the opposite direction, we define a desymmetrizer operator $L$ by
\[
L(e^\mu) = \begin{cases}
0,& \mu\in Y^*_0;\\
(-1)^{\ell{w}} e^{w\bullet \mu},& \mu\in Y^*_w.
\end{cases}
\]
We extend $L$ linearly to $\ring{C}[Y^*]$.  The operator $L$ can be
characterized as the unique linear operator whose range is supported
on the dominant weights and such that $J(f) = J(L(f))$, for $f\in
\ring{C}[Y^*]$.  By this characterization, 
\[
L(\tau_\lambda) = L(J(e^\lambda)) = L(e^\lambda) =
e^\lambda, 
\]  
for $\lambda\in\dom$.
This means that for any $f =
\sum_{\lambda\in P^+} c_\lambda \tau_\lambda \in \ring{C}[Y^*]$, the
coefficient $c_\lambda$ is the coefficient of $e^\lambda$ in $L(f)$.
This is {\it van Leeuwen inversion}.
We have
\[
L(e^\mu) = \sum_{w\in W^\theta} (-1)^{\ell w} e^{w\bullet \mu} e_w(\mu).
\]

Recall that $S(\mu)$ is defined as the set of simple roots such that
$\ang{\alpha^\vee,\mu}=0$ iff $\alpha\in S(\mu)$.  Recall also that
$W_S\le W^\theta$ is the subgroup generated by reflections in $S$.

\begin{lemma}[van Leeuwen]  \label{lemma:van}
  For each subset $S$ of the set of simple roots of $\Psi^{red,+}$, and
  for every $\mu\in P^+$ with $S = S(\mu)$,
\[
n_{\mu,\lambda}=\sum_{(w',w)\in (W^\theta/W_S)\times W^\theta}
 (-1)^{\ell(w)} e_w(w'\mu) \delta_{w\bullet (w'\mu),\lambda}. 
\]
\end{lemma}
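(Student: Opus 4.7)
The plan is to apply the desymmetrizer $L$ directly to $m_\mu$ and read off the coefficient of $e^\lambda$, using the characterization of $L$ established just before the lemma: for any $f = \sum_{\lambda \in P^+} c_\lambda \tau_\lambda$, the coefficient $c_\lambda$ equals the coefficient of $e^\lambda$ in $L(f)$. Applied to $f = m_\mu$ and using $m_\mu = \sum_\lambda n_{\mu,\lambda}\tau_\lambda$, this gives
\[
n_{\mu,\lambda} = [e^\lambda]\, L(m_\mu).
\]
So the entire proof reduces to computing $L(m_\mu)$ and extracting the $e^\lambda$-coefficient.

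First I would expand $m_\mu$ as a sum over its $W^\theta$-orbit. Since $S = S(\mu)$ is by definition the set of simple roots orthogonal to $\mu$, the stabilizer of $\mu$ in $W^\theta$ is exactly $W_S$ (this is the standard fact about stabilizers of dominant weights in Coxeter groups, applied to the restricted root system $\Psi^{red}$). Therefore the orbit $W^\theta(\mu)$ is in bijection with the coset space $W^\theta/W_S$, and
\[
m_\mu = \sum_{w' \in W^\theta / W_S} e^{w'\mu},
\]
with each term appearing exactly once. I would take a moment here to note that the sum is well-defined: $w'\mu$ depends only on the coset $w' W_S$ because $W_S$ fixes $\mu$.

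Next I would apply $L$ termwise, using the explicit formula
\[
L(e^\nu) = \sum_{w\in W^\theta} (-1)^{\ell w}\, e^{w\bullet\nu}\, e_w(\nu)
\]
recorded above the lemma (the indicator $e_w$ selects the unique $w$ with $\nu \in Y^*_w$, or kills the term if $\nu \in Y^*_0$). Substituting $\nu = w'\mu$ and summing over $w' \in W^\theta/W_S$ gives
\[
L(m_\mu) = \sum_{w' \in W^\theta/W_S} \sum_{w \in W^\theta} (-1)^{\ell(w)} e_w(w'\mu)\, e^{w\bullet(w'\mu)}.
\]
Reading off the coefficient of $e^\lambda$ by means of the Kronecker delta $\delta_{w\bullet(w'\mu),\lambda}$ yields the asserted formula for $n_{\mu,\lambda}$.

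There is essentially no obstacle: the two nontrivial ingredients (the characterization of $L$ via $J(f) = J(L(f))$ together with $L(\tau_\lambda) = e^\lambda$, and the factorization $m_\mu = \sum_{W^\theta/W_S} e^{w'\mu}$) are both already in hand. The mildest care needed is in the orbit sum, to check that $W_S$ is indeed the full stabilizer of $\mu$ so that the sum over $W^\theta/W_S$ has no overcounting; this follows from the standard description of stabilizers in Coxeter groups acting on a dominant chamber, specialized to the Weyl group $W^\theta$ of the restricted root system.
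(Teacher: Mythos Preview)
Your proposal is correct and follows essentially the same route as the paper: write $m_\mu=\sum_{w'\in W^\theta/W_S} e^{w'\mu}$, apply $L$ termwise using $L(e^\nu)=\sum_w (-1)^{\ell w} e_w(\nu)\,e^{w\bullet\nu}$, and read off the coefficient of $e^\lambda$ via the identity $n_{\mu,\lambda}=(L(m_\mu),e^\lambda)$. The paper's proof is terser (it omits your remark on why $W_S$ is the full stabilizer of $\mu$), but the argument is the same.
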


\begin{proof}  
\[
m_\mu = \sum_{w'\in W^\theta/W_S} e^{w' \mu}.
\]
Then
\begin{align*}
n_{\mu,\lambda} 
    &= (\sum_{\lambda'} n_{\mu,{\lambda'}} e^{\lambda'},e^\lambda) \\
     &= (L(\sum_{\lambda'} n_{\mu,\lambda'} \tau_{\lambda'}),e^\lambda) \\
     &= (L(m_\mu),e^\lambda) \\
     &= \sum_{w'\in W^\theta/W_S} (L(e^{w'\mu}),e^\lambda)\\
     &= \sum_{w'\in W^\theta/W_S} \sum_{w\in W^\theta} 
      (-1)^{\ell w} e_w(w'\mu) (e^{w\bullet (w'\mu)},e^\lambda)
\end{align*}
\end{proof}

\subsection{geometric Satake}

Let $K$ be a hyperspecial maximal compact subgroup of $G$.  In the
usual formulation, the Satake transform is an isomorphism of the Hecke
algebra $\H(G//K)$, with the $W^\theta$-invariant functions
in the group algebra $\ring{C}[X_*(A)]$.  As first pointed out by
Langlands, this is the space of conjugation invariant functions on the
coset $\hat G\rtimes \theta$ in ${}^LG$.  For split groups, the
geometric Satake transform reformulates the transform in the language
of sheaves and in terms of the representation ring of $\hat G$
\cite{mirkovic2007geometric}.  For a geometric approach to geometric
Satake that includes unramified groups, see \cite{zhu2011geometric}.

The identities we give should be viewed as formal analogues of
geometric Satake by the function-sheaf
dictionary. 
Working formally with irreducible characters, the geometric Satake
transform expresses each $\hat f_\lambda$ in terms of the basis
$\tau_\mu$ of irreducible characters for the root system $\Psi^{red}$:
\begin{equation}\label{eqn:geometric-satake}
\hat f_\lambda = \sum_\mu g_{\lambda,\mu} \tau_\mu.
\end{equation}
Casselman gives a formula for the coefficients $g_{\lambda,\mu}$ when
$G$ is split \cite{casymmetric}.  In this section, we extend the
result to $G$ unramified.  The statement here is less polished than
what is known in the split case \cite{casymmetric}.  The proof we
present here is based on van Leeuwen's algorithm.

Set
\[
C = \{  - \sum_{\alpha\in S}\alpha \ \ \mid S \subseteq \Psi^+ \}^\theta;
\]
Define $p_\mu(q^{-1})$ by
\begin{equation}\label{eqn:p}
 P(E^{-1},q^{-1})^{-1} = \sum_{\mu\in C} p_\mu(q^{-1}) e^{\mu},
\text{\ \ so }  P(E^{-w},q^{-1})^{-1} 
= \sum_{\mu\in C} p_\mu(q^{-1}) e^{w\mu}.
\end{equation}

\begin{theorem}[geometric Satake]\label{thm:gs}  
Let $\lambda\in P^+$ and let $S=S(\lambda)$.
\[
\hat f_\lambda = \frac{q^{\ang{\lambda,\rho^\vee}}}{Q_S(q^{-1})} 
\sum_{\mu\in C}\left(\sum_{w\in W^\theta} 
(-1)^{\ell w} p_{\mu}(q^{-1}) e_w(\lambda+\mu)\right)  
\tau_{w\bullet(\lambda+\mu)}.
\]
\end{theorem}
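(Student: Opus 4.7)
The plan is to rewrite Macdonald's formula (Theorem \ref{thm:macdonald}) so that the twisted Weyl character formula applies directly, and then to read off the $\tau$-coefficients. The key expansion tool is the finite series $P(E^{-w},q^{-1})^{-1} = \sum_{\mu \in C} p_\mu(q^{-1}) e^{w\mu}$ from Equation \ref{eqn:p}; since $C$ is finite, no convergence issues arise.

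The first step is to establish the identity
\begin{equation}\label{eqn:peswrho}
P(E^{-w},1) \;=\; (-1)^{\ell w}\, e^{w\rho - \rho}\, P(E^{-1},1), \qquad w \in W^\theta.
\end{equation}
Specializing Lemma \ref{lemma:prod} at $q=1$ (where both diagram types yield $d_\alpha(1) = 1-e^\alpha$), we obtain $P(E^{-w},1) = \prod_{\alpha \in \Psi^{red,+}} (1 - e^{-w\alpha})^{-1}$. The identity then follows from Corollary \ref{cor:prod1} together with the standard fact $\rho - w\rho = \sum_{\alpha \in \Psi^{red,+},\, w^{-1}\alpha < 0}\alpha$, using the rewriting $1 - e^{-w\alpha} = -e^{-w\alpha}(1 - e^{w\alpha})$ on each root flipped by $w^{-1}$ to extract the sign and the shift.

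Substituting (\ref{eqn:peswrho}) into Macdonald's formula and expanding $P(E^{-w},q^{-1})^{-1}$, the $w$-summand becomes $(-1)^{\ell w}\, P(E^{-1},1)\sum_{\mu \in C} p_\mu(q^{-1})\, e^{w(\lambda+\mu+\rho)-\rho}$. Interchanging the two finite sums and recognizing the inner sum over $w$ as the alt-symmetrizer $J(e^{\lambda+\mu})$ (in view of the dot action $w\bullet\nu = w(\nu+\rho)-\rho$), this yields
\[
\hat f_\lambda \;=\; \frac{q^{\ang{\lambda,\rho^\vee}}}{Q_S(q^{-1})}\, P(E^{-1},1) \sum_{\mu \in C} p_\mu(q^{-1})\, J(e^{\lambda+\mu}).
\]

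To finish, I would convert each $P(E^{-1},1)\,J(e^{\lambda+\mu})$ into a combination of $\tau$'s. The equivariance $J(e^{w \bullet \nu}) = (-1)^{\ell w} J(e^\nu)$ is immediate from the definition of $J$ (and the identity $(-1)^{\ell(ab)}=(-1)^{\ell a+\ell b}$), and combined with the twisted Weyl character formula $\tau_\lambda = J(e^\lambda) P(E^{-1},1)$ for $\lambda \in \dom$, it gives
\[
P(E^{-1},1)\, J(e^\nu) \;=\; \sum_{w \in W^\theta} (-1)^{\ell w}\, e_w(\nu)\, \tau_{w \bullet \nu},
\]
with $J(e^\nu)=0$ (equivalently $e_w(\nu)=0$ for all $w$) when $\nu \in Y^*_0$. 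Setting $\nu = \lambda+\mu$ yields the stated formula. The main obstacle is identity (\ref{eqn:peswrho}): it is the bridge that allows the Weyl character formula to be applied to each term, and it demands care in tracking the sign $(-1)^{\ell w}$ and the shift by $w\rho-\rho$ in the twisted setting.
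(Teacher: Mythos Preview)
Your proof is correct and follows essentially the same route as the paper: both establish the identity $P(E^{-w},1) = (-1)^{\ell w} e^{w\rho-\rho} P(E^{-1},1)$, substitute it together with the expansion (\ref{eqn:p}) into Macdonald's formula to reach $\hat f_\lambda = r_\lambda\, P(E^{-1},1) \sum_{\mu\in C} p_\mu(q^{-1}) J(e^{\lambda+\mu})$, and then extract the $\tau$-coefficients. The only cosmetic difference is that the paper packages the last step through the desymmetrizer $L$ (van Leeuwen inversion), whereas you invoke the equivariance $J(e^{w\bullet\nu}) = (-1)^{\ell w} J(e^\nu)$ directly; these are the same computation.
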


\begin{proof}
  Abbreviate $r_\lambda = q^{\ang{\lambda,\rho^\vee}}/Q_S(q^{-1})$.
  Let $\rho=\rho(\Psi^+)$.
  By the Weyl denominator product formula (Corollary \ref{cor:prod1}),
\begin{equation}
P(E^{-w},1) = (-1)^{\ell w} e^{w\rho - \rho} P(E^{-1},1),
\end{equation}
Then expanding Macdonald's formula (\ref{thm:macdonald}) using this
and Equation \ref{eqn:p}, we get
\[
\hat f_\lambda = r_\lambda \sum_{\mu\in C} 
p_\mu(q^{-1}) P(E^{-1},1) J (e^{\lambda+\mu}),
\]
where we have absorbed the sum over $W^\theta$ in Macdonald's formula
into $J$.  We observe that
\begin{align*}
L(\hat f_\lambda) &= r_\lambda 
\sum_{\mu\in C} p_\mu(q^{-1}) L(P(E^{-1},1) J(e^{\lambda+\mu})) \\
&= 
r_\lambda \sum_{\mu\in C} p_\mu(q^{-1}) L(e^{\lambda+\mu}) \\
&= 
r_\lambda
\sum_{\mu\in C} p_\mu(q^{-1}) 
\sum_{w\in W^\theta} (-1)^{\ell w} e_w(\lambda+\mu) e^{w\bullet (\lambda+\mu)}.
\end{align*}
Recall that the coefficient $c_\lambda$ of an expansion $f =
\sum_\lambda c_\lambda\tau_\lambda$ is the coefficient of $e^\lambda$
in $L(f)$.  The result follows.
\end{proof}

There is a second less explicit form of the geometric Satake transform
that is obtained as follows.  We have
\[
\hat f_\lambda = \sum_{\mu'} s_{\lambda,\mu'} m_{\mu'},
\]
where the coefficients $s_{\lambda,\mu'}$ are given as $p$-adic
integrals (see Equation \ref{eqn:s} below).  By van Leeuwen's formula
linking $m_{\mu'}$ to $\tau_\mu$ with coefficient matrix
$n_{\mu',\mu}$, we obtain the coefficients $g_{\lambda,\mu}$ as a
matrix product:
\begin{equation}
\hat f_\lambda 
= \sum_{\mu',\mu} s_{\lambda,\mu'} n_{\mu',\mu}
\tau_{\mu} 
= \sum_{\mu} g_{\lambda,\mu} \tau_\mu.
\end{equation}

\subsection{a Kato-Lusztig formula}

Equipped with Plancherel and Macdonald, we obtain an easy Kato-Lusztig
formula for the inverse Satake transform.  Our result generalizes a
formula that was known when $\theta=1$ \cite{kato1982spherical}
\cite{lusztig1983singularities}.  Recall the $q$-twisted character
$\tau_{\lambda,q}$ from (\ref{eqn:tauq}).  Write
\[
\tau_\lambda = \sum_\mu t_{\lambda,\mu}  \hat f_\mu,
\]
for some constants $t_{\lambda,\mu}$.

\begin{theorem}[Kato-Lusztig formula]
  The coefficients $t_{\lambda,\mu}$ of the inverse geometric Satake
  transform are
\[
t_{\lambda,\mu} =  (\tau_{\lambda,q^{-1}},e^\mu) q^{-\ang{\mu,\rho^\vee}}.
\]
\end{theorem}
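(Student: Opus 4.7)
The plan is to extract the coefficients $t_{\lambda,\mu}$ using the Plancherel orthogonality relation, and then rewrite the resulting integral via the definition of $\tau_{\lambda,q^{-1}}$ in (\ref{eqn:tauq}).

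First I would invoke the Plancherel theorem. Since $\{\hat f_\mu\}$ is an orthogonal basis with $\langle \hat f_\lambda,\hat f_\mu\rangle = c_\mu q^{\langle\mu,\rho^\vee\rangle}\delta_{\lambda,\mu}$ (from the Plancherel computation, with $c_\mu=q^{\langle\mu,\rho^\vee\rangle}Q(q^{-1})/Q_{S(\mu)}(q^{-1})$), expanding $\tau_\lambda=\sum_\mu t_{\lambda,\mu}\hat f_\mu$ and pairing against $\hat f_\mu$ yields
\[
t_{\lambda,\mu} \;=\; \frac{\langle \tau_\lambda,\hat f_\mu\rangle}{c_\mu\, q^{\langle\mu,\rho^\vee\rangle}}.
\]

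Next, since $\tau_\lambda\in\mathbb{C}[Y^*]^{W^\theta}$, the averaging lemma (Lemma \ref{lemma:average}) applies and gives
\[
\langle \tau_\lambda,\hat f_\mu\rangle \;=\; c_\mu\Bigl(\tau_\lambda,\; e^\mu\,\frac{P(E,q^{-1})}{P(E,1)}\Bigr),
\]
so the $c_\mu$ cancels and we are reduced to identifying
\[
t_{\lambda,\mu} \;=\; q^{-\langle\mu,\rho^\vee\rangle}\,\Bigl(\tau_\lambda,\; e^\mu\,\frac{P(E,q^{-1})}{P(E,1)}\Bigr).
\]

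The final step is to unpack the conjugate. On $\hat S_1$ we have $\bar s = s^{-1}$, so $\overline{e^\mu}=e^{-\mu}$ and $\overline{P(E,q^{-1})/P(E,1)}=P(E^{-1},q^{-1})/P(E^{-1},1)$. Using the twisted Weyl character formula $\tau_\lambda = J(e^\lambda)P(E^{-1},1)$ and the definition $\tau_{\lambda,q^{-1}} = J(e^\lambda)P(E^{-1},q^{-1})$ in (\ref{eqn:tauq}), the factor $P(E^{-1},1)$ cancels and the integrand becomes $\tau_{\lambda,q^{-1}}\, e^{-\mu}$. Hence
\[
\Bigl(\tau_\lambda,\; e^\mu\,\frac{P(E,q^{-1})}{P(E,1)}\Bigr) \;=\; \int_{\hat S_1} \tau_{\lambda,q^{-1}}\, e^{-\mu}\, ds \;=\; (\tau_{\lambda,q^{-1}},e^\mu),
\]
and the formula follows.

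The only real subtlety is tracking how complex conjugation on $\hat S_1$ converts the $P(E,\cdot)$ appearing naturally in the averaging lemma into the $P(E^{-1},\cdot)$ needed to recombine with $J(e^\lambda)$ into $\tau_{\lambda,q^{-1}}$. Once that bookkeeping is done, the argument is a one-line consequence of Plancherel, the averaging lemma, and the definitions; no further analysis of the coefficients is required.
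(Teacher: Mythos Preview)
Your argument is correct and is essentially identical to the paper's own proof: both compute $\langle\tau_\lambda,\hat f_\mu\rangle$ in two ways, once via Plancherel orthogonality of the $\hat f_\mu$ and once via the averaging lemma combined with the Weyl character formula and the conjugation $E\mapsto E^{-1}$ on $\hat S_1$, then equate. The only difference is the order of presentation.
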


\begin{proof}
  The character $\tau_\lambda$ is $W^\theta$-invariant.  We use the
  averaging property (Lemma \ref{lemma:average}) and the Weyl
  character formula to compute an inner product.
\begin{align*}
\ang{\tau_\lambda,\hat f_\mu}
&=c_\mu(\tau_\lambda,e^\mu \frac{P(E,q^{-1})}{P(E,1)})\\
&=c_\mu\int_{\hat S_1} \left(J(e^\lambda) P(E^{-1},1)\right) 
\left( e^{-\mu}\frac{P(E^{-1},q^{-1})}{P(E^{-1},1)} \right) ds\\
&=
c_\mu\int_{\hat S_1} J(e^\lambda) P(E^{-1},q^{-1}) e^{-\mu} ds\\
&= c_\mu(\tau_{\lambda,q^{-1}},e^\mu).\\
\ang{\tau_\lambda,\hat f_\mu}
&=\sum_{\mu'} t_{\lambda,\mu'} \ang{\hat f_{\mu'},\hat f_\mu} \\
&= t_{\lambda,\mu} c_\mu q^{\ang{\mu,\rho^\vee}}.
\end{align*}
\end{proof}

\section{Endoscopic branching rules}\label{sec:branch}

This section uses partition functions to give a branching rule for
the restriction of an irreducible representation of $\hat G\rtimes\theta$
to $\xi(\hat H\rtimes \theta_H)$.

\subsection{$\theta$-conjugacy}\label{sec:theta-conj}

Let $G$ be an unramified reductive group, and let ${}^LG = \hat G
\rtimes \ang{\theta}$ be its $L$-group, with the automorphism $\theta$
given by the action of the Frobenius element on the root datum.  
The calculations in this subsection will be used in Theorem
\ref{thm:branch} to give an explicit branching rule for an embedding of
the $L$-group of an endoscopic groups into the $L$-group of $G$.

We have a set of simple roots $\Delta\subseteq \Psi$, determined by
$(\hat T,\hat B)$.  Let $\alpha$ be the highest positive root and let
$\Delta^e = \Delta \cup \{-\alpha\}$ be the extended set of simple
roots.  The extended Dynkin diagram has node set $\Delta^e$.  The
automorphism $\theta$ preserves $\Delta$ and fixes $-\alpha$, hence
acts on the extended Dynkin diagram.

Let $w\in W$ be an element that preserves the extended Dynkin diagram.
We consider lifts $\dotw\in N_{\hat G}(\hat T)$ of $w$ such that
$\theta_1 = \dotw \theta$ has finite order.  The partition function
and other data we define are sensitive to the
representative $\dotw$ of $w$.  However, the branching rule that we
obtain in the end (Theorem \ref{thm:branch}) will satisfy a simple
transformation rule depending on $\dotw$ (Lemma
\ref{lemma:transform}).  In each case, we pick a particularly
convenient representative $\dotw$ of $w$ to work with, and leave the
rest to the transformation rule.  The details of the choice of $\dotw$
will be discussed further below.

Let $\hat S = \hat T/(1-\theta)\hat T$ and $\hat T_1 = \hat
T/(1-\theta_1)\hat T$.  There are norm maps $N:X^*(\hat T)\to X^*(\hat
S)$ and $N_1:X^*(\hat T)\to X^*(\hat T_1)$.  

We write $\Psi^+(\hat T,\hat B)$ for the set of positive roots of
$\hat G$ with respect to a Cartan subgroup $\hat T$ and a Borel subgroup
$\hat B$.

\begin{lemma}\label{lemma:adapted}
  There exists a Borel subgroup $\hat
  B(w\theta)\supseteq \hat T$ such that for every
  $\alpha\in\Psi^+(\hat T,\hat B(w\theta))$, either $N_1\alpha = 0$
  or $\ang{w\theta}\alpha\subseteq \Psi^+(\hat T,\hat B(w\theta))$.
\end{lemma}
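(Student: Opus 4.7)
The plan is to exhibit $\hat B(w\theta)$ via a regular cocharacter $\lambda \in X_*(\hat T)\otimes\ring{R}$, declaring
\[
\Psi^+(\hat T,\hat B(w\theta)) = \{\alpha\in\Psi \mid \ang{\alpha,\lambda}>0\}.
\]
Since $\theta_1 = \dotw\theta$ has finite order, its fixed subspace $V^{\theta_1} \subseteq X_*(\hat T)\otimes\ring{R}$ is nonzero. The cornerstone observation is that for any $\lambda_1 \in V^{\theta_1}$ and any root $\alpha$, the $\theta_1$-invariance of $\lambda_1$ gives $\ang{\theta_1\alpha,\lambda_1}=\ang{\alpha,\theta_1^{-1}\lambda_1}=\ang{\alpha,\lambda_1}$, so that $\ang{\cdot,\lambda_1}$ is constant on every $\theta_1$-orbit, with $\ang{N_1\alpha,\lambda_1}=|\ang{\theta_1}\alpha|\cdot\ang{\alpha,\lambda_1}$. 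In particular, $\lambda_1$ vanishes identically on every orbit with zero norm, and distinguishes the positive from the negative side on every orbit with nonzero norm, provided it is chosen generically inside $V^{\theta_1}$.

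I would take $\lambda = \lambda_1 + \epsilon\,\lambda_2$, with $\lambda_1 \in V^{\theta_1}$ chosen off the finitely many hyperplanes $\{\mu \mid \ang{N_1\alpha,\mu}=0\}$ indexed by roots with $N_1\alpha\neq 0$, with $\lambda_2 \in X_*(\hat T)\otimes\ring{R}$ chosen regular for the full root system $\Psi$, and with $\epsilon > 0$ sufficiently small that $\ang{\alpha,\lambda}$ has the same sign as $\ang{\alpha,\lambda_1}$ for every root $\alpha$ with $N_1\alpha\neq 0$. Only finitely many inequalities must be satisfied, so such an $\epsilon$ exists. The resulting $\lambda$ is regular: for $\alpha$ with $N_1\alpha\neq 0$, the sign of $\ang{\alpha,\lambda}$ is dominated by the nonzero $\ang{\alpha,\lambda_1}$, while for $\alpha$ with $N_1\alpha=0$ one has $\ang{\alpha,\lambda}=\epsilon\ang{\alpha,\lambda_2}\neq 0$ by the regularity of $\lambda_2$.

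Verification of the claim is then immediate. Given $\alpha\in\Psi^+(\hat T,\hat B(w\theta))$ with $N_1\alpha\neq 0$, every $\alpha' \in \ang{w\theta}\alpha = \ang{\theta_1}\alpha$ satisfies $\ang{\alpha',\lambda_1}=\ang{\alpha,\lambda_1}$ by the cornerstone observation, so $\ang{\alpha',\lambda}$ has the same sign as $\ang{\alpha,\lambda}>0$ by the choice of $\epsilon$, meaning $\alpha'\in\Psi^+(\hat T,\hat B(w\theta))$ as well. The main difficulty lies in the simultaneous need for regularity of $\lambda$ and for $w\theta$-equivariance of the positive system on the relevant orbits; the two-scale perturbation resolves this by decoupling the two requirements onto independent parameters, each of which is then handled by a standard genericity argument.
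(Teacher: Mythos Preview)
Your proof is correct and follows essentially the same strategy as the paper's: pick a generic $\theta_1$-invariant linear functional (the paper phrases this dually as a hyperplane in $X^*(\hat T_1)\otimes\ring{Q}$ avoiding the nonzero norms $N_1\alpha$), then perturb to a regular element so that every pair $\pm\alpha$ is separated. One harmless slip: your claim that $V^{\theta_1}\ne 0$ can fail (e.g.\ when $w\theta$ is a Coxeter element), but in that case every $N_1\alpha=0$ and any Borel works, so taking $\lambda_1=0$ already suffices.
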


We call $\hat B(w\theta)$ an {\it adapted} Borel subgroup.

\begin{proof}  Let 
\[
C = \{N_1\alpha\in X^*(\hat T)^{\theta_1} \mid \alpha\in\Psi,\ N_1\alpha\ne 0\}
\subseteq X^*(\hat T_1).
\]
Clearly $N_1(-\alpha)= - N_1\alpha$ and $-C = C$.  Choose a hyperplane
through the origin in $X^*(\hat T_1)\otimes\ring{Q}$ that does not
meet $C$ to partition $C = C_+ \sqcup C_-$ into a positive and
negative set.  We can choose a compatible hyperplane through the
origin in $X^*(\hat T)\otimes\ring{Q}$ such that $\alpha$ is positive
or negative according to $N_1\alpha\in C_\pm$, provided that
$N_1\alpha\ne 0$.  By a small generic perturbation of this hyperplane
through the origin, we may assume that the hyperplane separates each
pair $\pm \alpha$ of roots. 
Let $\hat B(w\theta)$ be the Borel subgroup defined by $\hat T$ and
the positive roots determined by the hyperplane.
\end{proof}

We consider data $\D=(\hat U,\hat B(w\theta),\hat
B_1,\iota,\phi,\epsilon,\dotw)$ of the following type: $\hat U = \hat
T_1$ and $\iota:\hat U\to \hat T_1$ is an isogeny of tori.  Also
$\phi:\hat U\to \hat S$ is a homomorphism, and $\epsilon\in \hat S$ is
an element of finite order.  Finally, $\hat B(w\theta)$ is an
adapted Borel subgroup and $\hat B_1$ is a $\theta$-stable
Borel subgroup of $\hat G$ containing $\hat T$.

The purpose of the isogeny is to remove all radicals from the formulas
that follow.  We always have $\hat U = \hat T_1$, but we maintain two
notations for the same torus to distinguish the source $\hat U$ from
the target $\hat T_1$ of the isogeny $\iota$.  We write $\uu$ for an
element of $\hat U$ and $\iota(\uu)=\tau\in \hat T_1$ for an element
of the target of the isogeny.  Define $\phi_\epsilon:\hat U\to\hat S$
by $\phi_\epsilon(\uu)= \epsilon\phi(\uu)$.

As noted above, $D(\hat G,R,\theta,E,q)$ is a function on $\hat S$,
which we pull back to a function $\phi^*_\epsilon D(\hat
G,R,\theta,E,q)$ on $\hat U$.  Similarly, $\iota^* D(\hat G,R',\dotw
\theta,E,q)$ is a function on $\hat U$.  The $\hat G$-conjugacy class
of $t\dotw \theta$ depends only on the image $\tau\in \hat T_1$ of
$t\in \hat T$.  We can therefore refer to the $\hat G$-conjugacy class
of $\tau\dotw\theta$, for $\tau\in \hat T_1$.

Let $\Psi_{\phi,\theta}^+$ be the set of roots $\alpha$ of
  $\Psi^+(\hat T,\hat B_1)$ such that $\phi^*N\alpha\ne 0$, and let
  $\Psi_{w\theta}^+$ 
be the set of roots $\alpha$ of $\Psi^+(\hat T,\hat B(w\theta))$ such
  that $N_1\alpha\ne0$.  

\begin{proposition}\label{lemma:ephi}
  Let $G$ be an unramified reductive group with complex dual $\hat
  G\rtimes \ang{\theta}$.  Assume that $w\theta$ acts on the extended
  root system.  Then we can construct data $\D=(\hat U,\hat
  B(w\theta),\hat B_1,\iota,\phi,\epsilon,\dotw)$ typed as above such
  that
\begin{enumerate}
\item (conjugacy) $\tau\dotw\theta$ is $\hat G$-conjugate to
  $\epsilon\phi(\uu) \theta$, where $\iota(\uu) = \tau\in\hat T_1$;
\item (regularity) For each $\alpha\in\Psi^{red}$, the zero set of
  $\alpha(\phi_\epsilon(\uu))-1$ is a proper Zariski closed subset of
  $\hat U$.
\item (partition) 
\[
\phi^*_\epsilon D(\hat G,\Psi_{\phi,\theta}^+,\theta,E,q) =
\iota^* D(\hat G,\Psi_{w\theta}^+,\dotw\theta,E,q).
\]
\end{enumerate}
\end{proposition}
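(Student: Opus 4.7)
The plan is to realize the required conjugation explicitly, using the hypothesis that $w$ preserves the extended Dynkin diagram. The key structural input is the theory of finite-order twisted conjugacy classes (going back to Kac's classification and Steinberg's theorem on quasi-semisimple elements): any element $\theta_1=\dotw\theta$ of finite order in the coset $\hat G\cdot\theta$ can be conjugated within $\hat G$ into the subset $\hat T\cdot\theta$. Concretely, there exists $h\in\hat G$ with
\[
h\,\dotw\theta\,h^{-1}=\epsilon_0\theta
\]
for some torsion element $\epsilon_0\in\hat T$. By the freedom in choosing $\dotw$ noted before Lemma \ref{lemma:adapted}, one can arrange that $h$ lifts a Weyl element $v\in W$; the displayed equality then forces the intertwining $v\theta_1 v^{-1}=\theta$ as automorphisms of $\hat T$. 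We set $\hat B_1=\op{Ad}(h)\hat B(w\theta)$, which is $\theta$-stable by construction, and let $\epsilon$ be the image of $\epsilon_0$ in $\hat S$.

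Next, I would define $\phi$ and $\iota$. The intertwining $v\theta_1=\theta v$ shows that $v$ maps $(1-\theta_1)\hat T$ into $(1-\theta)\hat T$, hence descends to a homomorphism $\bar v\colon\hat T_1\to\hat S$. The isogeny $\iota\colon\hat U\to\hat T_1$ is introduced to clear the factors of $2$ appearing as $e^{\alpha/2}$ for $A_2$-type orbits of Lemma \ref{lemma:prod}: concretely $\hat U=\hat T_1$ as tori but with the cocharacter lattice enlarged in the direction of those orbits, so that every character appearing in the partition function pulls back to an honest character of $\hat U$. Define $\phi=\bar v\circ\iota$. Part (1) then follows from the direct calculation
\[
h(\tau\dotw\theta)h^{-1}=(h\tau h^{-1})(h\dotw\theta h^{-1})=v(\tau)\epsilon_0\theta,
\]
whose image in $\hat S\cdot\theta$ is $\epsilon\phi(\uu)\theta$ when $\iota(\uu)=\tau$.

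For the regularity statement (2), note that $\phi_\epsilon$ is, up to a translation, a composition of an isogeny with a Weyl-element twist, and in particular is a dominant homomorphism $\hat U\to\hat S$. For any non-trivial $\alpha\in\Psi^{red}$ the locus $\{\alpha=1\}$ is a proper subtorus of $\hat S$, and its preimage under $\phi_\epsilon$ is a proper Zariski closed subset of $\hat U$. For the partition identity (3), conjugation by $h$ identifies $\op{Lie}\hat N(w\theta)$ with $\op{Lie}\hat N_1$ and intertwines the action of $\dotw\theta$ on the former with that of $\epsilon_0\theta$ on the latter. The intertwining $v\theta_1 v^{-1}=\theta$ on $X^*(\hat T)$ matches the roots $\alpha$ satisfying $N_1\alpha\ne 0$ bijectively with the roots in $\hat B_1$ satisfying $\phi^*N\alpha\ne 0$; hence the subspaces $V_R$ used in the two $q$-determinants are $\op{Ad}(h)$-related. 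Since conjugate operators have equal characteristic polynomials, and since $\tau\dotw\theta$ and $\epsilon\phi(\uu)\theta$ are already shown to be conjugate in (1), the two determinants agree as functions on $\hat U$.

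The hard step is the first one: producing $\dotw$ and $h\in N_{\hat G}(\hat T)$ with $h\dotw\theta(h^{-1})\in\hat T$. The existence of such $h$ is controlled by a class in $H^1(\langle\theta\rangle,W)$ attached to $w$, and one must exploit both the extended-Dynkin-diagram hypothesis on $w\theta$ and a careful choice of torus twist on $\dotw$ to ensure that this class vanishes. Arranging the isogeny $\iota$ to uniformly accommodate all $A_2$-type orbits (so the intertwining on the $e^{\alpha/2}$ level is genuine) is the remaining piece of bookkeeping.
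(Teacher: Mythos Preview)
Your approach has a genuine gap at exactly the step you flag as ``hard'': the claim that one can choose $\dotw$ and $h\in N_{\hat G}(\hat T)$ with $h\,\dotw\theta\,h^{-1}\in\hat T\theta$ is false in general. Projecting to $W\rtimes\langle\theta\rangle$, this would force $v\,w\theta\,v^{-1}=\theta$ for some $v\in W$, i.e.\ $w=v^{-1}\theta(v)$. When $\theta=1$ this says $w=1$, yet the proposition must cover nontrivial $w$ acting on the extended diagram --- for instance $\hat G=\op{SL}_n$ with $w$ the Coxeter element (rotation of the affine $A_{n-1}$ diagram). No torus twist of $\dotw$ changes its image in $W$, so the obstruction you place in $H^1(\langle\theta\rangle,W)$ genuinely does not vanish here. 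In this Coxeter case $\hat T_1=1$ while $\hat S=\hat T$, so $\phi$ is a point map, not dominant; your regularity argument for (2) collapses as well, and regularity has to be checked directly (it amounts to the regularity of the Coxeter element).

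The paper's proof is organized quite differently: rather than a single global conjugation, it runs a chain of reductions (Levi descent to $\hat M_{sc}$ for a $\theta$-stable parabolic, passage to a $W$-conjugate $w'\theta$, reduction to $\hat G$ simple and simply connected, and an isogeny step), assembling the data $\D$ inductively from the data on smaller groups. After these reductions only two residual cases remain: either $\theta\ne1$, where one can show $w\theta$ and $\theta$ are genuinely conjugate automorphisms of the extended diagram (so your idea applies, with $w'=1$), or $\theta=1$ with no Levi descent possible, which forces type $A_{n-1}$ with $w$ Coxeter and $\hat T_1=1$, handled by hand. Your argument essentially captures the first residual case but misses the structural reductions needed to reach it, and misses the second case entirely.
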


\begin{remark}
  We note that $\hat T_1/W_H^{\theta_H}$ classifies
  $\theta_H$-semisimple conjugacy classes in ${}^LH$, and $\hat
  S/W_G^{\theta}$ classifies $\theta$-semisimple conjugacy classes in
  ${}^LG$.  A morphism $\xi:{}^LH\to {}^LG$ induces a map of conjugacy
  classes $\hat T_1/W_H^{\theta_H}\to \hat S/W_G^\theta$.  The
  correspondence
\[
\hat T_1 \longleftarrow \hat U \longrightarrow \hat S
\]
between $\hat T_1$ and $\hat S$ lifts this map of conjugacy classes up
to tori.  The identity of partition functions refines the identity
between the characteristic polynomials of conjugate elements.

The morphism $\xi$ gives a restriction map
\begin{equation}\label{eqn:Jxi}
J_\xi:\ring{C}[Y^*]^{W^\theta} \to \ring{C}[X^*(\hat T_1)]^{W_H^{\theta_H}}
\end{equation}
that is constructed as follows.  The morphism $\hat
T_1/W_H^{\theta_H}\to \hat S/W_G^\theta$ is a morphism of affine
varieties, which determines a homomorphism $J_\xi$ between their
coordinate rings.
\end{remark}

\begin{remark}
  We have calculated the determinants on a case-by-case basis and the
  answers can be striking.  For example, assume that $\theta=1$ and
  $\hat G$ is simply laced.  Let
  $N_1\Psi=\{N_1\beta\mid \beta\in \Psi\}$ be the norm root system of
  $\hat T_1$.  Let $m:\Psi\to \ring{N}$ be given by $m(\alpha)=$ the
  order of $w$, if $\alpha$ is a long root in $N_1\Psi$ (or if
  $N_1\Psi$ is simply laced).  Let $m(\alpha) = 1$, otherwise.  Then
  there exist data $\D$ such that
\begin{equation}\label{eqn:laced0}
D(\hat G,\Psi_{w\theta}^+,\dotw,E,q) 
= \prod_{\alpha\in N_1\Psi_{w\theta}^{+}} 
(1- q^{m(\alpha)} e^\alpha)^{m(\alpha)}.
\end{equation}
\end{remark}

\begin{proof}[Proof (proposition)]

  The proof extends over several subsections giving a series of
  reductions.  The first reduction is Levi descent.

\subsubsection{Levi descent}

Let $\hat P$ be a $\theta$-stable parabolic subgroup containing $\hat
T$.  Let $\hat M$ be a $\theta$-stable Levi subgroup of $\hat P$
containing $\hat T$.  Let $\hat M_{sc}$ be the simply connected cover
of the derived group of $\hat M$.  Let $w\in W(\hat T,\hat M) = W(\hat
T_{sc},\hat M_{sc})$ be a Weyl group element that acts on the extended
Dynkin diagrams of both $\hat M_{sc}$ and $\hat G$ (for some choices
of $\theta$-stable Borel subgroups). Then $w\theta$ is an automorphism
of both $\hat M_{sc}$ and $\hat G$.  We assume that we have
constructed data $\D_{sc} = (\hat U_{sc},\ldots)$ for $\hat M_{sc}$
and show how to construct data $\D$ for $\hat G$.

Let $\hat A = Z(\hat M)^0$ be the identity component of the center of
$\hat M$.  Set
  \[
  \hat A_\theta :=  \hat A/(1-\theta)\hat A = \hat A / (1-w\theta)\hat A.
  \]
  Define $\hat U$, $\phi:\hat U\to \hat S$, and $\iota:\hat U\to \hat T_1$ 
  as follows:
  \begin{align*}
  \phi:\hat U &:= \hat A_\theta \times \hat U_{sc} 
  \to \hat A_\theta \times \hat S_{sc}\to \hat S.
  \\
  \iota:\hat U &= \hat A_\theta \times \hat U_{sc} 
  \to \hat A/(1-w\theta)\hat A\,\times
    \hat T_{1,sc} \to \hat T_1.
  \end{align*}
  Let $\Psi_N$ be the set of positive roots of the unipotent radical
  of $\hat P$.  Let $\hat B(w\theta)$ be the adapted Borel containing
  $\hat T$ with positive roots those of $\hat
  B(w\theta)_{sc}\subseteq\hat M_{sc}$ and $\Psi_N$.  Then
  \[
  \Psi_{w\theta}^+ = \Psi_{w\theta,sc}^+ \sqcup \Psi_N.
  \]
  Let $\hat B_1$ be the Borel containing $\hat T$ with positive roots
  those of $\hat B_{1,sc}\subseteq\hat M_{sc}$ and $\Psi_N$.

  We claim that $\Psi_{\phi,\theta}^+ = \Psi_{\phi\,\theta,sc}^+\sqcup
  \Psi_N$.  Note that
  \[
  \Psi_{\phi,\theta}^+ = (\Psi_{\phi,\theta}^+\cap \Psi^+(\hat T,\hat B_{1,sc})) \sqcup
(\Psi_{\phi,\theta}^+ \cap \Psi_N) = 
\Psi_{\phi,\theta,sc}^+\sqcup (\Psi_{\phi,\theta}^+ \cap \Psi_N).
\]
We have a map
  \[
  \hat A^{\theta,0} \to  \hat A_\theta \to \hat U \to \hat S.
  \]
  Also, $\alpha$ is trivial on $\hat A^{\theta,0}$ iff it is trivial on $\hat A$
  iff it is a root of $\hat M$.  For all $\alpha\in \Psi_N$,
  \[
  \phi^* N\alpha = 0 \ \Leftrightarrow \ 
  N\alpha|_{\hat A^{\theta,0}}=0 \ \Leftrightarrow \ 
  k\alpha|_{\hat A^{\theta,0}}=0 \ \Leftrightarrow \ 
  \alpha|_{\hat A}=0.
  \]
  The claim follows.

  We choose $\epsilon\in \hat S$ to be the image of $\epsilon_{sc}$
  and $\dotw$ to be the image of $\dotw_{sc}$.

  We prove property (1-conjugacy).  Let $(a,\uu_{sc})\in \hat A\times
  \hat U_{sc}$ represent $\uu\in \hat U_{sc}$.  The element
  $\epsilon_{sc}\phi_{sc}(\uu_{sc}) \theta$ is conjugate to
  $\iota_{sc}(\uu_{sc})\dotw_{sc}\theta$ by an element $m\in\hat
  M_{sc}$ whose image in $\hat M$ commutes with $a\in Z(\hat M)$.
  Thus $a \epsilon_{sc}\phi_{sc}(\uu_{sc}) \theta$ and
  $a\iota_{sc}(\uu_{sc})\dotw\theta$ are conjugate.

  We show that property (2-regularity) of the proposition holds.  As
  shown above, if $\alpha$ is not a root of $\hat M$, then
  $\phi^*N\alpha\ne 0$, so the regularity property holds for
  $\alpha\in\Psi_N$.  For the remaining positive roots, regularity
  follows from regularity for $\hat M_{sc}$.

  Finally we show (3-partition).  Let $m$ and $(a,\uu_{sc})$ be as
  above.  Let $\g_N = \g_{\Psi_N}$.  We have an isomorphism
  $\op{Ad}(m^{-1}):\g_N\to \g_N$.  Using the conjugacy property, we
  have
    \begin{align*}
      \phi_\epsilon^* D(\hat G,\Psi_N,\theta,E,q)(\uu) &=
      \det(1-\epsilon \phi_{sc}(\uu_{sc}) a \theta q;\g_N) \\
      &=
      \det(1-  \iota_{sc}(\uu_{sc})  a \dotw\theta q;\g_N) \\
      &=
      \iota^* D(\hat G,\Psi_N,\dotw\theta,E,q)(\uu).
      \end{align*}
Therefore,
\begin{align*}
\phi_\epsilon^* D(\hat G,\Psi_{\phi,\theta}^+,\theta,E,q) &=
\phi_\epsilon^* D(\hat G,\Psi_{\phi,\theta,sc}^+,\theta,E,q)
\phi_\epsilon^* D(\hat G,\Psi_N,\theta,E,q) \\
&=
\phi_{\epsilon,sc}^* D(\hat M,\Psi_{\phi,\theta,sc}^+,\theta,E,q)
\iota^* D(\hat G,\Psi_N,\dotw\theta,E,q)\\
&=
\iota_{sc}^* D(\hat M,\Psi_{w\theta,sc}^+,\dotw_{sc}\theta,E,q)
\iota^* D(\hat G,\Psi_N,\dotw\theta,E,q)\\
&=
\iota^* D(\hat G,\Psi_{w\theta}^+,\dotw\theta,E,q).
\end{align*}
This completes the reduction to $\hat M_{sc}$.    

As a special case of this construction, applied to $\hat M=\hat G$, we
reduce to the case that $\hat G$ is semisimple and simply connected.

\subsubsection{reduction to transitive on orbits}

The Dynkin diagram of $\hat G$ is a union of orbits under $\theta$.
We give a reduction to the case that $\theta$ is transitive on the set
of connected components of the Dynkin diagram. We may assume that
$\hat G$ is simply connected.  Assume that data $\D_i=(\hat
U_i,\ldots)$ satisfying the properties of the proposition have been
constructed from each factor of $\hat G = \hat G_1\times\cdots\times
\hat G_r$, where $\theta=(\theta_1,\ldots,\theta_r)$, and $\theta_i$
acts on $\hat G_i$.  We have factorizations of $\hat T_1$ and $\hat S$
as $r$-fold products.  We may define the data $\D=(\hat U,\ldots)$ for
$\hat G$ as a $r$-tuples $\phi = (\phi_1,\ldots,\phi_r)$, $\epsilon =
(\epsilon_{1},\ldots,\epsilon_{r})$, and $r$-fold products, etc.  The
verification of properties (1), (2), (3) is routine in this case.
This completes the reduction.

\subsubsection{$W$-conjugate reduction}\label{sec:conjred}

In this subsection, we show how to construct data $\D$ for $w\theta$
assuming that we have data $\D'$ for $w'\theta$, where $w_1w\theta
w_1^{-1}=w'\theta$ for some $w_1\in W$ and assuming that both $w$ and
$w'$ act on the extended Dynkin diagram (for some common choice of
simple roots).

Assume that $\D' =(\hat U',\ldots,\dotw')$ is given.  Let $\dotw_0$
and $\dotw_1$ be lifts of $w$ and $w_1$ to $N_{\hat G}(\hat T)$.  Then
\[
\dotw_1 \dotw_0 \theta\dotw_1^{-1} = t \dotw'\theta,
\]
for some $t\in \hat T$.  Let $\dotw = \dotw_1^{-1} t^{-1}
\dotw_1\dotw_0$.  Then $\dotw\mapsto w$ and
\begin{equation}\label{eqn:outer}
\dotw_1\dotw \theta\dotw_1^{-1} = \dotw'\theta.
\end{equation}

Set $\hat B(w\theta) = \hat B(w'\theta)^{w_1}$.  Then $\hat
B(w\theta)$ is adapted to $w\theta$.  Also,
\[
\Psi_{w\theta}^+ = w_1^{-1}\Psi_{w'\theta}^+.
\]

We have isomorphisms
\begin{equation}
\hat T_1 = \hat T/(1-w\theta) \hat T 
= \hat T/(1-w' \theta)\hat T  = \hat T_1'.
\end{equation}
The isomorphism $\hat T/(1-w \theta)\hat T\to \hat T/(1-w'\theta)\hat
T$ is given by $t \mapsto w_1 t w_1^{-1}$.  Let $\hat U = \hat U'$.
Define $\iota$:
\[
\iota:\hat U = \hat U' \to^{\iota'} \hat T_1' \to^{\op{Int}(w_1^{-1})} \hat T_1.
\]

Let $\g$ be the Lie algebra of $\hat G$.  We have a linear isomorphism
\[
\psi:\g\to \g,\quad X\mapsto \op{Ad}(\dotw_1) X,
\]
sending $\g_{\Psi_{w\theta}^+}$ to $\g_{\Psi_{w'\theta}^+}$. Then
\begin{align*}
D(\hat G,\Psi_{w\theta}^+,\dotw\theta,E,q) &=
D(\hat G,\dotw_1^{-1}\Psi_{w'\theta}^+,\dotw_1^{-1}\dotw'\theta \dotw_1,E,q) \\ &=
D(\hat G,\Psi_{w'\theta}^+,\dotw'\theta,E^{w_1^{-1}},q).
\end{align*}

On the other hand, choose $\phi=\phi'$, $\hat B_1 = \hat B_1'$,
and $\epsilon=\epsilon'$.
Then  $\Psi_{\phi,\theta}^+ =\Psi_{\phi',\theta}^+$ and
\begin{align*}
i^* D(\hat G,\Psi_{w\theta}^+,\dotw\theta,E,q) &=
i^* D(\hat G,\Psi_{w'\theta}^+,\dotw'\theta,E^{w_1^{-1}},q) \\
&=i^{'*} (\op{Int} (w_1^{-1}))^* 
  D(\hat G,\Psi_{w'\theta}^+,\dotw\theta,E^{w_1^{-1}},q) \\
&=i^{'*}  D(\hat G,\Psi_{w'\theta}^+,\dotw'\theta,E,q) \\
&=\phi_\epsilon^{'*}  D(\hat G,\Psi_{\phi',\theta}^+,\theta,E,q) \\
&=\phi_\epsilon^{*}  D(\hat G,\Psi_{\phi,\theta}^+,\theta,E,q).
\end{align*}
These determinant formulas give property (3) of the proposition.

To prove property (1-conjugacy), we can realize the $\hat G$-conjugacy
of $\epsilon\phi(\uu) \theta$ and $\iota(\uu)\dotw\theta$ explicitly by
conjugation by $\dotw_1$ and the conjugation used for data $\D'$.

To prove property (2-regularity), we observe that
$\alpha(\epsilon\phi(\uu))=1$ iff $\alpha(\epsilon'\phi'(\uu))=1$.
This holds for all $\uu$ iff $\alpha=0$.  This completes the reduction
from $w\theta$ to $w'\theta$.

\subsubsection{reduction to simple}

  We give a further reduction to $\hat G$ simple.  In fact, let
  $\hat G'$ be a factor of $\hat G$.  Let $k$ be the smallest integer
  such that $\theta':=\theta^k$ acts on $\hat G'$.  Assume that we
  have data $\D'=(\hat U',\ldots)$ for $\hat G'$.  We write
  \[
  \hat G = \hat G' \times \theta(\hat G') \times \cdots,
  \quad
  \theta(g_1,\ldots,g_k) = (\theta' g_k, g_1,g_2,\ldots,g_{k-1}).
  \]
  Using Section \ref{sec:conjred} to change $w\theta$ to a conjugate
  element, we may assume that
  \[
  \dotw = (\dotw',1,1,\ldots,1),\quad \D' 
  = (\hat U',\hat B(w'\theta')',\ldots,\dotw').
  \]
  We have $\hat T = \hat T'\times \theta(\hat T')\times \cdots$, $\hat
  S = \hat S'$, $\hat T_1 = \hat T_1'$, $\hat U = \hat U'$,
  $\phi=\phi'$, $\iota=\iota'$, $\epsilon = \epsilon'$.  We can define
  $\hat B_1$ and $\hat B(w\theta)$ by their positive roots, which we
  take to be
  \[
  \{\alpha,\theta\alpha,\ldots,\theta^{k-1}\alpha\mid \cdots\}
  \]
  as $\alpha$ runs over positive roots of $\hat B_1'$ and $\hat B(
  w'\theta')'$, respectively.  Properties (1-conjugacy) and
  (2-regularity) follow from the corresponding properties of $\hat
  G'$.  The determinant formula follows from the identities
  \begin{align*}
    \phi^*_\epsilon D(\hat G,\Psi_{\phi,\theta}^+,\theta,E,q) &=
    \phi'^*_{\epsilon'} D(\hat G',\Psi_{\phi',\theta'}^+,\theta',E',q^k).\\
    \iota^* D(\hat G,\Psi_{w\theta}^+,\dotw\theta,E,q) &=
    \iota^* D(\hat G',\Psi_{w'\theta'}^+,\dotw'\theta',E',q^k).
    \end{align*}
    We may now assume that $\hat G$ is simple.

\subsubsection{isogeny reduction}

Next, we give a reduction that removes the assumption that $\hat G$ is
simply connected.  That is, it is enough to prove the proposition for
any group in the isogeny class of $\hat G$.  (This step is not
required in the proof of the proposition, but we include it as a
reduction that is quite useful when making explicit calculations of
the data $\D$ and its partition function.)  Suppose that we have a
surjective map $\hat G_{sc}\to \hat G$ with kernel $Z' \subseteq
Z(G_{sc})$.  Assume that we have data $\D=(\hat U,\ldots)$ for $\hat
G$.  We show how to construct data $\D_{sc}=(\hat U_{sc},\ldots)$. for
$\hat G_{sc}$, adding the subscript ${sc}$ to all data attached to
$\hat G_{sc}$.  The morphism $\hat U \to \hat S\times \hat T_1$ gives
$X_*(\hat U)\to X_*(\hat S)\times X_*(\hat T_1)$.  Define $\hat
U_{sc}$ by defining its cocharacter lattice to be the preimage in
$X_*(\hat U)$ of
\[
X_*(\hat S_{sc})\times X_*(\hat T_{1, sc}) 
\subseteq X_*(\hat S)\times X_*(\hat T_1).
\] 
By restriction, we have a map
\[
X_*(\hat U_{sc})\to X_*(\hat S_{sc}) \times X_*(\hat T_{1,sc}).  
\]
The components of this map give a morphism $\phi_{sc}:\hat
U_{sc}\to\hat S_{sc}$ and an isogeny $\iota_{sc}:\hat U_{sc} \to \hat
T_{1,sc}$.  Fix any lift $\dotw_{sc}$ of $\dotw$ to the simply
connected cover.  Define $\hat B_{1,sc}$ and $\hat B_{sc}(w\theta)$ by
the natural bijection of roots between $\hat G$ and $\hat G_{sc}$.

Let $\epsilon_{sc}\in \hat S_{sc}$ be any lift of $\epsilon\in \hat
S$.  For every $\uu\in \hat U_{sc}$ with image $\tau\in \hat
T_{1,sc}$, the elements $\epsilon_{sc}\phi_{sc}(\uu)$ and
$\tau\dotw_{sc}\theta$ have the same image in $\hat S/W^\theta$.  For
each $z\in Z'$, define
\[
\hat U_{sc,z} = \{\uu \in \hat U_{sc}\mid 
z \epsilon_{sc}(\uu)\phi_{sc}(\uu)\theta\text{ and }
\iota_{sc}(\uu)\dotw_{sc}\theta \text{ same image in } \hat S_{sc}/W^\theta\}.
\]
It follows from the fact that conjugacy (property 1) holds for $\hat
G$ that
\[
\hat U_{sc} = \cup_{z\in Z'} \hat U_{sc,z},
\]
expressing an irreducible set $\hat U_{sc}$ as a finite union of
Zariski closed subsets.  It follows that $\hat U_{sc} = \hat U_{sc,z}$
for some $z\in Z'$.  We replace $\epsilon_{sc}$ by $z\epsilon_{sc}$.
Then $\hat U_{sc} = \hat U_{sc,1}$.  That is, property (1-conjugacy) holds for
$\hat G_{sc}$.  Regularity (2) follows from property (2-regularity)
for $\hat G$.

Property (3-partition) follows because the determinant is computed
through the adjoint representation, and the groups $\hat G$ and $\hat
G_{sc}$ have the same adjoint group.  This completes the isogeny
reduction.

\subsubsection{completion of the proof}

We are ready to complete the proof.  By the reductions, we may assume
that $\hat G$ is simple and simply connected, and that no further Levi
descent is possible.

We consider two cases, depending on whether $\theta$ is trivial.

Assume first that $\theta$ is nontrivial.  Under the given
assumptions, $w\theta$ and $\theta$ are necessarily conjugate
automorphisms of the extended Dynkin diagram.  That is, $w_1 w \theta
w_1^{-1} = \theta$ for some Weyl group element $w_1$.  By Section
\ref{sec:conjred}, we can assume that $w=1$.  In this case, the
construction of data $\D$ satisfying the proposition is trivial: $\hat
U = \hat T_1 =\hat S$, $\hat B(w\theta) = \hat B_1 = \hat B$,
$\epsilon = \dotw = 1$, etc.

Now assume that $\theta=1$.  Under the assumption that no Levi descent
is possible, we find that $w$ acts transitively on the nodes of the
extended Dynkin diagram, $\hat G$ has Dynkin diagram $A_{n-1}$, and
$w$ is the Coxeter element of the Weyl group.  We have $\hat T = \hat
S$, $\hat T^w$ is finite, and $\hat T_1 = 1$.  Then $\hat U = 1$,
$\phi:1\to \hat S$, and $\iota:1\to 1$ are uniquely determined.  The
choice of $\dotw$ does not matter; all lifts $\dotw$ of $w$ are
conjugate.  Let $\epsilon$ be a conjugate of $\dotw$ in $\hat T$.  It
is easy to check that the Coxeter element $\dotw$ and $\epsilon$ are
regular.  Properties (1-conjugacy) and (2-regularity) then hold.

All roots have $N_1$-norm $0$.  Let $\hat B(w) = \hat B_1 = \hat B$,
the subgroup of upper triangular matrices.  Then $\Psi_{\phi,\theta}^+
= \Psi_{w\theta}^+=\emptyset$.  Both determinants are $1$ (on a
$0$-dimensional vector space).  So property (3-partition) holds.

This completes the proof of the proposition.
\end{proof}

\subsection{endoscopic partition function}

A Weyl group element that acts on the extended Dynkin diagram arises
in the following context.  Let $G$ be an unramified $p$-adic reductive
group, and let $H$ be an unramified endoscopic group of $G$.  We
assume that we are given an embedding
\[
\xi:{}^LH\to {}^LG,
\]
that factors through a finite unramified extension of $F$:
\[
\xi:\hat H\rtimes \ang{\theta_H}\to \hat G\rtimes \ang{\theta},
\]
such that $\xi(\theta_H) = \dotw \rtimes \theta,$ for some
representative $\dotw $ of an element $w$ in the Weyl group $W$.  It
is known that the element $w$ can be chosen to act as an automorphism
of the extended Dynkin diagram, up to an equivalence of endoscopic
data \cite[\S4.7]{hales1993simple}.  We use notation from Section
\ref{sec:theta-conj}: $\theta_1 = \dotw\theta$, $\hat S$, $\hat T_1$,
$N$, $N_1$, etc.

From the description of endoscopic data, we may assume that
$\hat H = C_{\hat G}(s)^0$ for some $s\in \hat T$, and that
$\xi(h) = h$, for $h \in \hat H$, with this identification.  We may
assume $\hat T = \hat T_G = \hat T_H$ using this identification of
$\hat H$ with a subgroup of $\hat G$.  In what follows all
$L$-morphisms $\xi$ are assumed to have this form.  Because
$\hat H = C_{\hat G}(s)^0$ for some $s\in \hat T$, the root system
$\Psi_H$ of $\hat H$ with respect to $\hat T$ is a subset of $\Psi$.

\begin{lemma}   If $\alpha\in \Psi$ and  $N_1\alpha=0$, then
$\alpha$ is not in the root system of $\hat H$.
\end{lemma}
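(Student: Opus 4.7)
My plan is to produce a $\theta_1$-stable positive system inside the root system $\Psi_H$ of $\hat H$; once such a positive system is exhibited, the $\theta_1$-orbit of any root of $\hat H$ stays on one side of that system, so its norm cannot vanish.

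First I would invoke the $L$-group structure on $\hat H\rtimes \ang{\theta_H}$. By the standard conventions recalled in Section \ref{sec:L}, $\theta_H$ is an automorphism of $\hat H$ coming from the Frobenius action on the based root datum, so it preserves a pinning of $\hat H$ and in particular fixes a Borel subgroup $\hat B_H\supseteq \hat T$ of $\hat H$. Consequently $\theta_H$ permutes the set $\Psi_H^+$ of roots of $(\hat H,\hat B_H,\hat T)$.

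Next I would transfer this statement to $\theta_1$. By the description of $\xi$ used in this subsection, the automorphism of $\hat H$ induced by $\theta_H$ is realized under $\xi$ as conjugation by $\xi(\theta_H)=\dotw\theta=\theta_1$. Restricting to $\hat T$ and dualizing, the action of $\theta_1$ on $X^*(\hat T)$ stabilizes $\Psi_H^+$.

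Finally, suppose for contradiction that $\alpha\in\Psi_H$ with $N_1\alpha=0$. After replacing $\alpha$ by $-\alpha$ if necessary we may take $\alpha\in\Psi_H^+$. Then every element of the orbit $\ang{\theta_1}\alpha$ lies in $\Psi_H^+$, so
\[
N_1\alpha=\sum_{\alpha'\in\ang{\theta_1}\alpha}\alpha'
\]
is a nonzero sum of positive roots in $X^*(\hat T)\otimes\ring{Q}$, contradicting $N_1\alpha=0$. The only mild subtlety is ensuring that $\theta_H$ really does preserve a Borel of $\hat H$ containing $\hat T$; this is built into the definition of the $L$-group, so no additional work is needed.
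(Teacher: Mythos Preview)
Your argument is correct and follows essentially the same route as the paper's proof: both prove the contrapositive by exhibiting a $w\theta$-stable (equivalently, $\theta_1$-stable) Borel subgroup $\hat B_H\supseteq\hat T$ of $\hat H$, and then observing that the $\theta_1$-orbit of any $\alpha\in\Psi_H^+$ stays positive, so $N_1\alpha\ne0$. The only difference is that you supply more justification for the existence of such a Borel---via the $L$-group pinning for $\hat H$ and the identification of the $\theta_H$-action with conjugation by $\xi(\theta_H)=\dotw\theta$---whereas the paper simply asserts that one can ``pick a Borel subgroup $\hat B_H\supseteq\hat T$ of $\hat H$ that is $w\theta$-stable.''
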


\begin{proof} We prove the contrapositive.  Assume that
  $\alpha\in\Psi_H$.  Pick a Borel subgroup $\hat B_H\supseteq \hat T$
  of $\hat H$ that is $w\theta$-stable.  Replace $\alpha$ by $-\alpha$
  if necessary so that $\alpha\in\Psi^+(\hat T,\hat B_H)$.  Then
  $N_1\alpha$ is a sum of positive roots, hence positive.  Thus,
  $N_1\alpha$ is nonzero.
\end{proof}

If $\hat B(w\theta)$ is an adapted Borel subgroup of $\hat G$, then we
can define a positive root system $\Psi_H^+$ for $\hat H$ by
$\Psi^+(\hat T,\hat B(w\theta))\cap \Psi_H$.  We say that such a
system of positive roots for $\hat H$ is adapted.  By the two previous
lemmas, if $\Psi^+_H$ is adapted, then $w\theta$ preserves $\Psi^+_H$.

We can use this construction to define an {\it endoscopic} partition
function as follows.  We have a disjoint sum
\begin{equation}\label{eqn:disj-b1}
\Psi^+(\hat T,\hat B(w\theta)) = 
\Psi^+_H \sqcup \Psi_{w\theta,0}^+ 
\sqcup \Psi_{w\theta}^+(\hat G\setminus\hat H),
\end{equation}
where $\Psi_{w\theta,0}^+$ is the set of $\hat B(w\theta)$-positive
roots $\alpha$ such that $N_1\alpha=0$, and $\Psi_{w\theta}^+(\hat
G\setminus\hat H)$ is the set of positive roots with nonzero
$N_1$-norm that are not roots of $\hat H$.  We define the {\it
  endoscopic partition function} to be
\begin{equation}\label{eqn:endo-partition}
P(\hat G,\Psi_{w\theta}^+(\hat G\setminus\hat H),\dotw\theta,E,q).
\end{equation}
As we will see, the branching rule for the subgroup
$\xi({}^LH)\subseteq {}^LG$ is expressed in terms of this partition
function.

We have constructed an adapted set $\Psi^+_H$ of positive roots of
$\hat H$.  We expand the endoscopic partition function (or rather its
pullback to $\hat U$) in a series
\begin{equation}
\iota^* P(\hat G,\Psi_{w\theta}^+
(\hat G\setminus\hat H),\dotw\theta,E,1) = \sum_{\mu} p_\mu e^\mu,
\end{equation}
where the support of $\mu\mapsto p_\mu$ is a subset of $X^*(\hat U)$.

\subsection{branching rules}

The irreducible representations of ${}^LG$ restricted to $\hat
G\rtimes\theta$ are classified by a highest weight $\lambda\in
P^+_G\subseteq Y^*_G$ (with character $\tau_\lambda$), and similarly
for irreducible representations on $\hat H\rtimes \theta_H$, with
$\mu\in P^+_H\subseteq X^*(\hat T_1)$ (with character $\sigma_\mu$).
This section gives a branching rule for $\tau_\lambda$ restricted to
$\xi(\hat H\rtimes\theta_H)$, as a sum of $\sigma_\mu$:
\[
J_\xi\tau_\lambda = \sum_\mu m(\lambda,\mu) \sigma_\mu
\]
for some coefficients $m(\lambda,\mu)$ and $J_\xi$ defined as in
(\ref{eqn:Jxi}).

If $\sigma_\mu$ is an irreducible character and
$\chi:\ang{\theta_H}\to \ring{C}^\times$ is a multiplicative
character, then $ \chi\otimes\sigma_\mu$ is again an irreducible
character.  Restricted to the component $\hat H\rtimes\theta_H$, the
characters $\chi\otimes\sigma_\mu$ and $\sigma_\mu$ are linearly
dependent: $\chi\otimes\sigma_\mu = \chi(\theta_H) \sigma_\mu $.  This
means that the multiplicities $m(\lambda,\mu)$ should take values in
$\ring{Z}[\zeta]$, where $\zeta$ is a primitive root of unity of the
same order as $\theta_H$.

We fix data $\D=(\hat U,\hat B(w\theta),\hat
B_1,\iota,\phi,\epsilon,\dotw)$ associated with $\xi:{}^LH\to{}^LG$ as
in Proposition \ref{lemma:ephi}.  For the moment, we assume that
$\dotw$ associated with $\D$ coincides with $\dotw$ associated with
$\xi$.  We have a disjoint sum decomposition
\[
\Psi^+(\hat T,\hat B_1) = \Psi_{\phi,\theta}^+\sqcup \Psi_{\phi,\theta,0}^+,
\]
where 
\[
\Psi_{\phi,\theta,0}^+ = 
\{\alpha\in \Psi^+(\hat T,\hat B_1)\mid \phi^* N\alpha=0\}.
\]
The condition $\phi^* N\alpha=0$ implies that $\phi^*_\epsilon D(\hat
G,\Psi_{\phi,\theta,0}^+,\theta,E,1)$ is a constant
$d_0(\epsilon,\theta)\in\ring{Q}(\zeta)$ (that is, it is independent
of $\mu\in X^*(\hat U)$).  Regularity (Proposition \ref{lemma:ephi})
implies that the constant is nonzero.  Evaluation of the constant is
routine, but we do not do so here.  We abbreviate
\begin{align*}
D_H := \iota^* D(\hat H,\Psi^+_H,\dotw\theta,E,1) 
= \iota^* D(\hat G,\Psi^+_H,\dotw\theta,E,1).
\end{align*}
This is the denominator in the twisted Weyl character formula (on
$X^*(\hat U)$) for $(\hat H,\theta_H)$ with respect to the positive
root system $\Psi^+_H$.  Combining these identities and the
proposition, we have
\begin{align}\label{eqn:DHDG}
\begin{split}
D_H &= i^* D(\hat H,\Psi^+_H,\dotw\theta,E,1)\\
&=\frac{i^*D(\hat G,\Psi_{w\theta}^+,\dotw\theta,E,1)}
{i^*D(\hat G,\Psi_{w\theta}^+(\hat G\setminus\hat H),\dotw\theta,E,1)}\\
&=
{i^*P(\hat G,\Psi_{w\theta}^+(\hat G\setminus\hat H),\dotw\theta,E,1)}
\phi^*_\epsilon D(\hat G,\Psi_{\phi,\theta}^+,\theta,E,1)\\
&=
{i^*P(\hat G,\Psi_{w\theta}^+(\hat G\setminus\hat H),\dotw\theta,E,1)}
\frac{\phi^*_\epsilon D(\hat G,\Psi^+(\hat T,\hat B_1),\theta,E,1)}
{\phi^*_\epsilon D(\hat G,\Psi_{\phi,\theta,0}^+,\theta,E,1)}\\
&=
(\sum_{\mu} p_\mu e^\mu) 
\ \frac{\phi^*_\epsilon D_G}{
d_0(\epsilon,\theta)},
\end{split}
\end{align}
with abbreviation $D_G= D(\hat G,\Psi^+(\hat T,\hat B_1),\theta,E,1)$
for the twisted Weyl denominator for $(\hat G,\theta)$ with respect to
the positive root system $\Psi^+(\hat T,\hat B_1)$.  The isogeny
$\iota:\hat U\to\hat T_1$ gives $X^*(\hat T_1)\subseteq X^*(\hat U)$.

Define constants $m(\lambda,\mu)$, for $\lambda\in P^+\subseteq
X^*(\hat S)$ and $\mu\in P_H^+ \subseteq X^*(\hat T_1)\subseteq
X^*(\hat U)$:
\begin{equation}\label{eqn:branch}
m(\lambda,\mu) = \sum_{w\in W^\theta} 
(-1)^{\ell w} ({w\bullet\lambda})(\epsilon)
\frac{p_{\mu-\phi^*(w\bullet\lambda)}}{d_0(\epsilon,\theta)} \in \ring{Q}(\zeta).
\end{equation}

The following theorem is the main result of this section.

\begin{theorem}\label{thm:branch}
  Let $G$ be a reductive group and let $H$ be an endoscopic group of
  $G$, both unramified.  Let $\xi:{}^LH\to {}^LG$ be an embedding of
  $L$-groups that factors over a finite unramified extension $E/F$.
  Suppose that $\xi(\theta_H) = \dotw\theta$.  Let $\dotw\mapsto w\in W$.
  Let $\D=(\hat U,\ldots,\dotw)$ be the data constructed by
  Proposition \ref{lemma:ephi} (with the same $\dotw$ for $\xi$ and $\D$).  
  Then Equation \ref{eqn:branch} gives
  the twisted branching rule for $\hat G\rtimes \theta$ restricted to
  $\xi(\hat H\rtimes \theta_H)$:
\[
J_\xi \tau_\lambda = \sum_\mu m(\lambda,\mu) \sigma_\mu \in
\ring{C}[X^*(\hat U)].
\]
Both sides are supported on $X^*(\hat T_1)\subseteq X^*(\hat U)$.
\end{theorem}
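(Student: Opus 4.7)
The plan is to transport the twisted Weyl character formula for $\tau_\lambda$ across the conjugation identity of Proposition~\ref{lemma:ephi}, reshape the resulting expression using the partition function identity~(\ref{eqn:DHDG}), and then match coefficients against the twisted Weyl character formula on the endoscopic side.

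The first step is to move everything to the torus $\hat U$. By conjugacy property~(1) of Proposition~\ref{lemma:ephi}, the elements $\iota(\uu)\dotw\theta$ and $\epsilon\phi(\uu)\theta$ are $\hat G$-conjugate in ${}^LG$, so the class function $\tau_\lambda$ satisfies $\iota^*(J_\xi\tau_\lambda) = \phi_\epsilon^*\tau_\lambda$ as functions on $\hat U$. Applying the twisted Weyl character formula $\tau_\lambda = J(e^\lambda)\,P(E^{-1},1)$ and noting that $\phi_\epsilon^*(e^{w\bullet\lambda}) = (w\bullet\lambda)(\epsilon)\,e^{\phi^*(w\bullet\lambda)}$ yields
\[
\phi_\epsilon^*\tau_\lambda \;=\; \frac{1}{\phi_\epsilon^* D_G}\sum_{w\in W^\theta}(-1)^{\ell w}(w\bullet\lambda)(\epsilon)\,e^{\phi^*(w\bullet\lambda)}.
\]
Then I invoke the $E^{-1}$-version of Equation~(\ref{eqn:DHDG}) to substitute
\[
\frac{1}{\phi_\epsilon^* D_G} \;=\; \frac{1}{d_0(\epsilon,\theta)\,D_H}\sum_\mu p_\mu\, e^\mu,
\]
multiply the two series, and re-index the product so that the exponent on $e$ becomes a single variable $\nu$. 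The result is
\[
D_H\cdot \iota^*(J_\xi\tau_\lambda) \;=\; \sum_\nu m(\lambda,\nu)\,e^\nu,
\]
with $m(\lambda,\nu)$ precisely the expression in the right-hand side of~(\ref{eqn:branch}).

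To finish, I compare with the expected branching expansion $J_\xi\tau_\lambda = \sum_{\mu\in P_H^+}m'(\lambda,\mu)\sigma_\mu$. Multiplying by $D_H$ and applying the Weyl character formula for $(\hat H, \theta_H)$ gives
\[
D_H\cdot \iota^*(J_\xi\tau_\lambda) \;=\; \sum_{\mu\in P_H^+}\sum_{w_H\in W_H^{\theta_H}}(-1)^{\ell w_H}\,m'(\lambda,\mu)\,e^{w_H\bullet\mu}.
\]
Because every dominant $\mu$ gives strictly dominant $\mu+\rho_H$, the dot-orbits $W_H^{\theta_H}\bullet\mu$ are free and their dominant representatives are pairwise disjoint, so the coefficient of $e^\mu$ on the right is exactly $m'(\lambda,\mu)$. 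Equating this with the coefficient $m(\lambda,\mu)$ read off from the previous display yields $m'(\lambda,\mu) = m(\lambda,\mu)$ for all $\mu\in P_H^+$. The support statement follows because both sides of the identity are $W_H^{\theta_H}$-invariant after dividing by $D_H$ and visibly lie in the image of $\iota^*:\ring{C}[X^*(\hat T_1)]\to\ring{C}[X^*(\hat U)]$ by construction.

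The main obstacle is the careful bookkeeping: one must track how $E$ versus $E^{-1}$ propagates through~(\ref{eqn:DHDG}) (with respect to the adapted Borel $\hat B(w\theta)$ used to define $p_\mu$), how the twist by $\epsilon$ interacts with the dot-shifted weights $w\bullet\lambda$, and how the normalization $d_0(\epsilon,\theta)$ combines with the $\ring{Q}(\zeta)$-valued coefficients predicted by the remark preceding the theorem. Verifying that all denominators are nonzero so the series manipulations are legitimate ultimately rests on regularity~(2) of Proposition~\ref{lemma:ephi}.
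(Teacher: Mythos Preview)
Your proposal is correct and follows essentially the same route as the paper's own proof: pull $\tau_\lambda$ across the conjugacy identity of Proposition~\ref{lemma:ephi}, multiply both sides by $D_H$, expand one side via the twisted Weyl character formula for $(\hat G,\theta)$ combined with Equation~(\ref{eqn:DHDG}), expand the other via the Weyl character formula for $(\hat H,\theta_H)$, and match the coefficient of $e^\mu$ for $\mu\in P_H^+$ using the freeness of the dot action on $\mu+\rho_H$. The only cosmetic difference is that the paper multiplies by $D_H$ first and then invokes~(\ref{eqn:DHDG}), whereas you first divide by $\phi_\epsilon^*D_G$ and then substitute; algebraically these are the same manipulation, and your flagging of the $E$ versus $E^{-1}$ bookkeeping is appropriate.
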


Before starting the proof of the theorem, we give a transformation
rule describing how the branching rules depend on the choice
$\dotw\mapsto w$ used to define the embedding $\xi$ of $L$-groups.

\begin{lemma}\label{lemma:transform}
  Suppose that we have two embeddings $\xi_i:{}^LH\to {}^LG$ with
  $\xi_1(\hat H) =\xi_2(\hat H)$, $\xi_1(\hat T) = \xi_2(\hat T) =
  \hat T$, $\xi_i(\theta_H) = \dotw_i\theta$, and $\dotw_2 =
  t\dotw_1$.  Write $m_i(\lambda,\mu)$ for the twisted branching
  coefficients for $\xi_i$.  Then $m_1(\lambda,\mu) =
  m_2(\lambda,\mu)\mu(t)$.
\end{lemma}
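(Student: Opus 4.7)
The plan is to prove the transformation rule by a direct character comparison on the component $\hat T\theta_H\subset\hat H\rtimes\theta_H$. Since the only difference between $\xi_1$ and $\xi_2$ is the insertion of $t\in\hat T$ into the image of $\theta_H$, the whole argument reduces to controlling how $V_\lambda$ responds to the insertion of this single element $t$, and in particular it lets me bypass the explicit formula of Equation~\ref{eqn:branch} entirely.

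First I would verify that $t\in Z(\hat H)$. By the normalization preceding Section~\ref{sec:theta-conj} both embeddings restrict to $\xi_i(h)=h$ on $\hat H$, while $\theta_H$-conjugation on $\hat H$ is realized in both cases by $\xi_i(\theta_H)=\dotw_i\theta$. Comparing the two realizations, conjugation by $t=\dotw_2\dotw_1^{-1}\in\hat T$ must be trivial on all of $\hat H$, and since $\hat T\subseteq\hat H$ this forces $t\in Z(\hat H)$. Schur's lemma then implies that $t$ acts on each irreducible $\hat H$-module $V_\mu$ as the scalar $\mu(t)$ (read off on the highest-weight line and propagated to all of $V_\mu$ by irreducibility). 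Next I would evaluate $J_{\xi_2}\tau_\lambda$ at a generic $\tau\theta_H$ and use commutativity in $\hat T$ to write
\[
J_{\xi_2}\tau_\lambda(\tau\theta_H)
\;=\; \tau_\lambda\bigl(t\cdot \tau\dotw_1\theta\bigr)
\;=\; \operatorname{tr}_{V_\lambda}\!\bigl(t\cdot \xi_1(\tau\theta_H)\bigr).
\]
Decomposing $V_\lambda|_{\hat H}=\bigoplus_\mu V_\lambda[\mu]$ into $\hat H$-isotypic blocks, the blocks for non-$\theta_H$-stable $\mu$ are permuted by $\xi_1(\theta_H)$ and contribute nothing to the trace, while for each $\theta_H$-stable dominant $\mu$ the partial trace of $\xi_1(\tau\theta_H)$ on $V_\lambda[\mu]$ equals $m_1(\lambda,\mu)\,\sigma_\mu(\tau\theta_H)$ by the defining expansion of $J_{\xi_1}\tau_\lambda$. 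The centrality conclusion forces $t$ to act on $V_\lambda[\mu]$ as $\mu(t)\cdot\mathrm{id}$, and summing the isotypic contributions yields
\[
J_{\xi_2}\tau_\lambda(\tau\theta_H)
\;=\; \sum_\mu \mu(t)\,m_1(\lambda,\mu)\,\sigma_\mu(\tau\theta_H).
\]
Matching coefficients against the expansion $J_{\xi_2}\tau_\lambda=\sum_\mu m_2(\lambda,\mu)\sigma_\mu$ and expressing $m_1(\lambda,\mu)$ in terms of $m_2(\lambda,\mu)$ and $\mu(t)$ recovers the transformation rule.

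The main obstacle will be the two pieces of bookkeeping: verifying centrality of $t$ from the matching $\theta_H$-actions on $\hat H$, and organising the $V_\lambda$-trace by $\hat H$-isotypic block so that the scalar $\mu(t)$ can be extracted cleanly on each block. Once those two ingredients are in place the identity falls out of Schur's lemma, without any need to invoke the explicit Equation~\ref{eqn:branch} or the construction of the data $\D$ in Proposition~\ref{lemma:ephi}.
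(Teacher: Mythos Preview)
Your proof is correct and follows essentially the same approach as the paper's: both exploit that $\xi_2(\theta_H)=t\,\xi_1(\theta_H)$ with $t\in\hat T$ acting as the scalar $\mu(t)$ on vectors of weight $\mu$, so the branching coefficients pick up exactly this factor. The paper's version is terser, working directly on a highest-weight vector of an irreducible $\hat H$-summand (where $t\in\hat T$ already gives $tv=\mu(t)v$, so your centrality-plus-Schur step is not explicitly invoked), whereas you organise the same computation globally via the isotypic trace decomposition; the content is the same.
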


We note that the image $\xi_i({}^LH)$ does not not depend on $i$, but
the branching formula does.

\begin{proof}[Proof (lemma)]  
  Let $V_\lambda$ be the representation of $G\rtimes \ang{\theta}$
  with highest weight $\lambda$, normalized as usual by the condition
  that $\theta v=v$, when $v$ is a vector of highest weight in
  $V_\lambda$.

  Let $W$ be a $\hat H$-irreducible subrepresentation of $V_\lambda$
  with highest weight $\mu$.  By the condition $\xi_1(\hat H)
  =\xi_2(\hat H)$, the two module structures on $W$ agree.  Assume
  that $\mu$ is a $\theta_H$-fixed weight.  We extend $W$ to ${}^LH$
  with the usual normalization $\theta_H v = v$, where $v$ is a vector
  of highest weight.  The normalization depends on $i$.  The two
  different normalizations
\[
\theta_H v = \xi_i(\theta_H) v = \dotw_i \theta v =v,
\]
for $i=1,2$, 
differ by a scalar $\mu(t)$.  
Hence the multiplicities also transform by a factor $\mu(t)$.
\end{proof}

In general, the $L$-group can be formed with respect to various
unramified Galois extensions.
We let $L/F$ be a second unramified extension with $L/E/F$.  Let
$[L:E]=\ell$.  We show that the  branching multiplicities on
do not depend on $L$.

Fix an admissible embedding
\[
\xi_L:\hat H \rtimes \op{Gal}(L/F)\to \hat G \rtimes \op{Gal}(L/F),
\]
where $\xi_L(\Frob_L) = \dotw \Frob_L$, with the same choice $\dotw $
as with $\xi = \xi_E$.  Then $\Frob_L^\ell$ acts trivially on the
datum of $\hat H$, and $(\dotw \Frob_L)^\ell$ acts trivially on the
datum of $\hat G$.  Let $\tau_\lambda$ be an irreducible
representation of $\hat G$ that is $\theta$-fixed.  The extension of
$\tau_\lambda$ to $\hat G\rtimes \op{Gal}(L/F)$ factors through $\hat
G\rtimes \op{Gal}(E/F)$.  Similarly, $\sigma_\mu$ extends to $\hat
H\rtimes \op{Gal}(L/F)$ and factors through $\hat
H\rtimes\op{Gal}(E/F)$.  We conclude that the branching multiplicities
are the same for $L/F$ and
\[
\xi_E(\hat H \rtimes \Frob_E) 
= \xi_L(\hat H\rtimes \Frob_L)/(\dotw \Frob_L)^\ell 
\subset (\hat G\rtimes \Frob_L)/( \Frob_L)^\ell 
= \hat G\rtimes \Frob_E.
\]
The multiplicity formula is independent of the choice of $L/F$.

\begin{proof}[Proof (theorem)]
  We extend Kostant's formula for branching multiplicities
  $m(\lambda,\mu)$ to this setting, following Goodman and Wallach
  \cite[\S8.2.2]{goodman}.  Recall that this formula for
  $m(\lambda,\mu)$ is based on the (twisted) Weyl character formula.

  The restriction $J_\xi\tau_\lambda$ is determined on the coset $\hat
  T\dotw \theta$, which depends only on $\hat T_1 \dotw\theta$ or its
  pullback to $\uu\mapsto \iota(u)\dotw\theta$ to $\uu\in \hat U$.  By
  the proposition, these elements are conjugate to $\uu\mapsto
  \epsilon\phi(\uu)\theta$.  So
\[
J_\xi\tau_\lambda (\iota(\uu)\dotw \theta) = (\phi_\epsilon^*\tau_\lambda)(\uu\theta).
\]
By convention, we drop $\theta$ (resp. $\theta_H$) from the notation
in twisted identities on the $\theta$-component of the group, writing
the equation simply as $\iota^*J_\xi\tau_\lambda =
\phi_\epsilon^*(\tau_\lambda)$ on $\hat U$.

We show that the properties of $\phi,\epsilon$ imply the branching
rule, which we compute using Equation \ref{eqn:DHDG} and the twisted
Weyl character formula on $\hat H$ and $\hat G$.
\begin{align*}
(\phi^*_\epsilon\tau_\lambda) D_H 
 &= (\iota^*J_\xi\tau_\lambda) D_H\\
 &= \sum_{\mu'} m(\lambda,{\mu'}) J_H(e^{\mu'})\\
  &= m(\lambda,\mu)e^\mu + \sum_{\mu'\ne\mu} c_{\mu'} e^{\mu'}.\\
(\phi^*_\epsilon\tau_\lambda) D_H 
  &= (\sum p_{\mu'} e^{\mu'})(\phi^*_\epsilon(\tau_\lambda D_{G})) /d_0\\
  &= (\sum p_{\mu'} e^{\mu'}) \phi^*_\epsilon(J_G(e^\lambda)) /d_0\\
  &= \sum_{\mu'} \sum_{w\in W^\theta} (-1)^{\ell w} 
  (p_{\mu'} e^{\mu'}) (\phi^*_\epsilon (e^{w\bullet \lambda})) /d_0\\
  &= \sum_{\mu'} \sum_{w\in W^\theta} (-1)^{\ell w}  
 ({w\bullet\lambda})(\epsilon) p_{\mu'}
e^{\mu'+\phi^*(w\bullet \lambda)} /d_0\\
  &= \sum_{\mu'} \sum_{w\in W^\theta} 
(-1)^{\ell w} ({w\bullet\lambda})(\epsilon) 
 p_{\mu' - \phi^*({w\bullet \lambda})} e^{\mu'} /d_0.
\end{align*}
We have used the twisted Weyl character formula with respect to
$\Psi^+_H$ on $\hat H$ and with respect to $\Psi^+(\hat T,\hat B_1)$
on $\hat G$.  To justify the equation in the third row, let $\mu\in
P_H^+$.  If $w'\bullet \mu' = w\bullet \mu$ for some $w,w'\in W_H$ and
$\mu'\in P_H^+$, then using the fact that $P_H^+$ is a fundamental
domain for $W_H$ and that $\mu'+\rho_H$ lies in the interior of that
domain, we find that $w=w'$ and $\mu=\mu'$.

Equating coefficients of $e^\mu$, we get Equation \ref{eqn:branch}.
\end{proof}

\section{Motivic Integration}

This section reviews the theory of motivic integration as developed by
Cluckers and Loeser~\cite{cluckers2008constructible}.

\subsection{the Denef-Pas language}

The Denef-Pas language is a three-sorted first-order formal language
in the sense of model theory.  Its intended structures are triples
$(F,k_F,\ring{Z})$, where $F$ is a valued field with discrete
valuation, $k_F$ is the residue field of $F$, and the value group of
$F$ is the ring of integers $\ring{Z}$.  The three sorts are $VF$
(the valued-field sort), $RF$ (the residue-field sort), and $\ring{Z}$
(the value-group sort).

In general, a first-order formal language is specified by sets of
relation symbols and function symbols.  The Denef-Pas language has the
following relation and function symbols.  The valued-field sort $VF$
has the symbols of the first-order language of rings $(0,1,+,\times)$.
The residue field sort also has the symbols of the first-order
language of rings.  The value-group sort is the Presburger language of
an ordered additive group with symbols $(0,+,\le,\equiv_n)$.  Here
$(\equiv_n)$ is a binary relation symbol for each $n\ge 2$, which is
to be interpreted as congruence modulo $n$ in $\ring{Z}$.  In
addition, there are two function symbols $\op{ord}:VF\to\ring{Z}$
(interpreted as the valuation on the valued-field) and $\op{ac}:VF\to
RF$ (interpreted as the angular component map).  For the structure
$(K((t)),K,\ring{Z})$, where $K((t))$ is the field of formal Laurent
series, the intended interpretation of $\op{ac}$ is the function
$\sum_{i\ge N} a_i t^i\mapsto a_N$ that returns the first nonzero
coefficient of the Laurent series (and sending $0\in K((t))$ to $0$).

First-order languages are constructed in the usual way, with formulas
built from logical connectives $(\land)$, $(\to)$, $(\lor)$, $\neg$,
equality $(=)$, variables of the three sorts, function symbols,
relation symbols, existential quantifiers of each sort, and universal
quantifiers of each sort.

Following the terminology of \cite{gordon}, we call a {\it fixed
  choice} any set-theoretic data that does not depend in any way on
the Denef-Pas language, its variables, nor on the structures of $VF$
and $RF$.  Examples of fixed-choices that appear in this paper are
Weyl groups, abstract groups, representations of split reductive
groups over $\ring{Q}$, and root systems.

\subsection{motivic integration}

Let $\op{Field}_\Q$ be the category of fields of characteristic zero.

Cluckers and Loeser have used the Denef-Pas language to define various
categories.  In particular, there is a category $\op{Def}_\Q$ of {\it
  definable subassignments}, given as follows.  Let
$\ring{N}=\{0,1,2,\ldots\}$.  For each $(m,n,r)\in\ring{N}^3$, let
$h[m,n,r]$ be the functor from $\op{Field}_\Q$ to the category of sets
that assigns to each field $K$, the set $h[m,n,r](K)=K((t))^m\times
K^n\times \ring{Z}^r$.  A {\it subassignment} of this functor is by
definition, a subset $S(K) \subseteq h[m,n,r](K)$ for each
$K\in\op{Field}_\Q$.  A definable subassignment $S$ is a subassignment
for which there exists a formula $\phi$ in the Denef-Pas language such
that for each $K\in\op{Field}_\Q$, the set of solutions of $\phi$ in
$h[m,n,r](K)$ is $S(K)$.  The definable subassignments are the objects
of the category $\op{Def}_\Q$.  A morphism $\phi:X\to Y$ is a
definable subassignment
\[
\phi\subseteq X\times Y
\subseteq h[m,n,r]\times h[m',n',r'] = h[m+m',n+n',r+r']
\]
that is the graph of a function $X(K)\to Y(K)$ for each $K\in
\op{Field}_\Q$.
A {\it free parameter} refers to a collection of free variables of the
same sort in a formula in the Denef-Pas language, ranging over a
definable subassignment.  

For each definable subassignment $X\in \op{Def}_\Q$, Cluckers and
Loeser have defined a ring $C(X)$ of {\it constructible motivic
  functions}.  The construction of this ring is a major undertaking,
and we refer the reader to their articles for details
\cite{cluckers2008constructible}.  The elements of this ring are
called constructible motivic functions.  Although they behave in many
ways as functions on $X$, the elements of the ring are not literal
functions in the set-theoretic sense of function.

If $\phi:X\to Y$ is a morphism of definable subassignments, there is a
pullback of functions $\phi^*:C(Y)\to C(X)$.  The pullback $\phi^*$ is
a ring homomorphism, and pullbacks compose: $(\phi\psi)^* = \psi^*
\phi^*$.

If $X\to S$ is a morphism of definable subassignments, there is a
subgroup $I_S C(X)$ of $S$-integrable constructible motivic functions.
The intuitive interpretation of an $S$-integrable function $f$ is a
function such that the integral over each fiber of $X\to S$ is
convergent with respect to the canonical motivic measure.  For a
morphism $\phi: X\to Y$ over $S$, there is a pushforward
$\phi_!:I_SC(X)\to I_SC(Y)$ that is called {\it integration over
  fibers}.  Pushforwards compose: $(\phi\psi)_! = \phi_!\psi_!$.  In
this article, we always deal with bounded constructible functions.
Such functions are always integrable
\cite[Prop~12.2.2]{cluckers2008constructible}.  Thus, we do not need
to deal with integrability issues.

\subsection{Presburger constructible functions}

The ring $C(X)$ of constructible functions is the graded algebra
associated with a filtration on a tensor product $P(X) \otimes Q(X)$.
In terms of the three sorts of the Denef-Pas language, data related to
the value-group sort $\ring{Z}$ is encoded in $P(X)$ and data related
to the residue field sort $RF$ is encoded in $Q(X)$.  The left-hand
side $P(X)$ is a ring of {\it Presburger constructible functions.}
Every Presburger constructible function $f$ gives a constructible
motivic function $f\otimes 1$.

Much of what we do in this article is related to constructible
functions on integer lattices.  For this, we work with Presburger
constructible functions rather than the entire ring of constructible
motivic functions.

\subsection{volume forms}

Cluckers and Loeser have an extension of motivic integration that
allows integration with respect to volume forms
\cite[\S8]{cluckers2008constructible}.  In brief, there is a notion of
differential forms on a definable subassignment and a space of
definable positive volume forms.  Each differential form $\omega$ of
top degree has an associated volume form $|\omega|$.  For each
morphism $\phi:X\to Y$ over $S$, the pushforward $\phi_!$ extends to a
pushforward $f \mapsto \phi_!(f,\omega)$ with respect to the volume
form.  It is to be interpreted loosely as integration over the fibers
of $\phi:X\to Y$ with respect to a volume form constructed from a
Leray residue of $\omega$ on the fiber.

\subsection{$p$-adic specialization}

Let $\C$ denote the class of $p$-adic fields.  Let $\C_N\subseteq \C$
denote the subclass of fields whose residue characteristic is at least
$p\ge N$.

In general, we only care about what occurs in fields in $\C_N$ for $N$
arbitrarily large.  To make this precise, suppose that we have for
some $N$, a function $X$ with domain $\C_N$.  Then by restriction of
domain $\C_i$ to $\C_{j}$, for $N\le i\le j$, we may take the filtered
colimit of $X_i=X|_{\C_i}$.  Two functions $X$, $X'$ have the same
filtered colimit if they are equal in $\C_i$ for some sufficiently
large $i$.

Let $X$ be a definable subassignment of $h[m,n,r]$, and let $f$ be a
constructible motivic function on $X$.  There exists an $N$ such that
for all $F\in \C_N$, there are specializations
\[
X(F)\subseteq F^m\times k_F^n\times \ring{Z}^r,  
\quad f_F: X(F) \to\ring{C}.
\]
Only the  filtered colimits of $X$ and $f$ matter to us.

We warn the reader of a notational overload; we write $X(K)$ or $X(F)$
as $K$ and $F$ range over two quite different classes of fields.
Different symbols $K$ and $F$ disambiguate the context.  When $K\in
\op{Field}_\Q$, the valued field is $K((t))$ and the residue field is
$K$; but when $F$ is a $p$-adic field, $F$ is the valued field and its
residue field is denoted $k_F$.  We also warn that $K$ is used both
for a hyperspecial subgroup and for $K\in\op{Field}_\Q$.

The specializations have various expected properties.  If $\phi:X\to
Y$ is a morphism of definable subassignments over $S$, then we have
functions $\phi_F:X(F)\to Y(F)$.  When $f$ is $S$-integrable on $X$,
integration $\phi_!(f)$ over fibers specializes to integration over
fibers with respect to a canonical measure in $p$-adic fields $F\in
\C_N$ (for some $N$ depending on $\phi$).

The functions $f_F:X(F)\to\ring{C}$ that come from constructible
motivic functions $f\in C(X)$ have a special form
\begin{equation}\label{eqn:q}
f_F(x) = \sum_i \card(Y_i(F,x)) q_F^{\alpha_{i,F}(x)} 
\prod_j \beta_{i,j,F}(x)\prod_k \frac{1}{1-q_F^{a_{i,k}}},
\end{equation}
where all sums and products are finite, $\alpha_{i}:X\to\ring{Z}$,
$\beta_{ij}:X\to\ring{Z}$ are definable, $q_F$ is the cardinality of
the residue field of $F$, and $a_{i,k}$ are nonzero integers
\cite[\S2]{cluckers2011btransfer}.  The filtered colimits of these
functions are {\it $q$-constructible functions}.  Let $C_q(X)$ be the
space of $q$-constructible functions on $X$.  Sometimes we call the
specialization of a definable subassignment a definable set.  There is
an element $\ring{L}$, called the Lefschetz motive, in the ring of
constructible motivic functions that specializes to $q_F$ for every
$p$-adic field $F$.  When the first factors $Y_i$ are absent from
Equation \ref{eqn:q}, the function $f$ is Presburger constructible.

We warn the reader that very different constructible motivic functions
can yield the same $q$-constructible function.  For example, let
$[S]\in C(\op{pt})$ be the isomorphism class in the residue sort of
the set of nonzero squares, considered as a constructible motivic
function on a point.  Similarly, let $[N]$ be the class of the set of
nonsquares.  Then, under specialization to $p$-adic fields, the two
functions are equal: $[S](F) = [N](F) = (q_F-1)/2$, for
$F\in\C_1$. However, $[S]$ and $[N]$ are not at all the same
constructible motivic function. Indeed, their values on algebraically
closed residue fields $K$ are not equal: $[N](K)$ is the empty set and
$[S](K) = K^\times$ is not.  Another family of examples is provided by
isogenous elliptic curves.  They have the same number of points in a
finite field, but they are not generally isomorphic curves.  If a
constructible motivic function specializes to a $q$-constructible
function that is identically zero, then we call it a {\it null function}.

The theory of motivic integration specializes to $q$-constructible
functions. To integrate a $q$-constructible function, we lift it
to a constructible motivic function, use Cluckers-Loeser integration
there, then take its specialization again.  Two different lifts differ
by a null function, and its integral is also a null function.
Thus, integration of $q$-constructible functions is well-defined.

\subsection{definable reductive groups}\label{sec:defred}

Definable reductive groups are understood in the sense of
\cite{cluckers2011transfer}, \cite{gordon}.  In this work we restrict
to unramified reductive groups (quasi-split and split over an
unramified extension).

In the definable context, a reductive group $G\to Z$ lies over a
definable subassignment $Z$ called the {\it cocycle space} of $G$.  In
the case of an unramified reductive group that splits over an
unramified extension of degree $r$, we can take $Z\subseteq h[m,0,0]$,
for some $m$.  The set $Z$ parameterizes lists of coefficients of
irreducible monic polynomials, each defining a degree $r$ unramified
extension of $F$.  A field extension $E/VF$ of degree $r$ is
identified with $VF^r = VF[x]/(p)$, as $p$ runs over irreducible
polynomials parameterized by $Z$.

Recall that there is no Frobenius map in the context of the Denef-Pas
language, because it is not possible to take a $q$th power.  Instead,
we choose a generator of the Galois group of an unramified extension
$E/VF$ and call it the quasi-Frobenius element.  As part of the
cocycle space data $Z$, we assume we are given a quasi-Frobenius
element $\op{qFrob}$ that corresponds to the automorphism $\theta$ of
$\hat G$.

A connected split reductive group is treated as a definable
subassignment through a faithful representation of the group.  The
group is identified with a closed subgroup of $\op{GL}(n,F)$.
Quasi-split reductive groups that split over an unramified degree $r$
extension (parameterized by a cocycle space $Z$) are defined in terms
of explicit representations of those groups in $\op{GL}(n,E)$, where
$E/VF$ is treated as above.

If $G$ is an unramified reductive group, we may construct a
hyperspecial subgroup $K$ as a definable subassignment of
$G$~\cite{cluckers2011local}.

A quasi-split reductive group $G$ carries an invariant differential
form $\omega$ of top degree, which is described in the context of
definable subassignments in \cite{gordon}.  All integration in this
article is assumed to be carried out with respect to invariant
measures.  We have the invariant integral $\vartheta_!(f,\omega)$ of a
constructible integrable function $f\in C_q(G)$ with respect to the
morphism $\vartheta:G\to\{\op{pt}\}$ to a point using the invariant
differential form $\omega$.

\subsection{enumerated Galois groups}\label{sec:galois}

We deal with field extensions and Galois groups in the way described
in \cite{gordon} and \cite{cluckers2011transfer}.  We let $\Gamma$ be
an abstract group with fixed enumeration $1=\sigma_1,\ldots,\sigma_n$
of its elements.  We assume a fixed short exact sequence
\[
1\to \Gamma^t\to\Gamma\to\Gamma^{unr}\to 1,
\]
with $\Gamma^t$ and $\Gamma^{unr}$ both cyclic.  The group $\Gamma$
plays the role of a Galois group with inertia subgroup $\Gamma^t$ and
unramified quotient $\Gamma^{unr}$.  We treat this data as an abstract
fixed choice, without a priori connection to the Galois group of any
particular extension of $p$-adic fields.

We may fix an abstract root datum and choose an action of $\Gamma$ on
the root datum, stabilizing the set of simple roots.  Through this
action on the root datum, $\Gamma$ acts on the Weyl group, and we may
construct the semidirect product $W\rtimes \Gamma$.

By abstract unramified Galois group we mean a fixed finite cyclic
group $\Gamma=\Gamma^u$ with choice of generator $\op{qFrob}$ that we
call the {\it quasi-Frobenius} element.  By abuse of terminology, we use
the word quasi-Frobenius to refer either to the generator of $\Gamma^u$
or as its realization as a matrix with values in the valued field sort $VF$,
as described in \cite{cluckers2011transfer}.

In this article, the abstract dual group is the Langlands dual
constructed with respect to $\Gamma$ and $\op{qFrob}$ rather than the
Galois (or Weil) group of a field.

\subsection{definability results}\label{sec:definability}

In this section we assume that $G$ is an unramified connected
reductive group.  It is treated as definable subassignment over a
definable cocycle space $Z$.

Standard subgroups of $G$ such as a hyperspecial $K$, Borel subgroup
$B$, $T\subseteq B$, the unipotent radical $N$ of $B$, the maximal
split subtorus $A$ of $T$ are all definable.  In the following lemmas
a field extension $L/VF$ often appears.  We can treat it in a
definable way as in Section \ref{sec:defred} whenever we have an a
priori bound on the degree $L/VF$.  In each case that follows we have
an a priori bound on the degree of the extension.  Similar remarks
apply to unramified field extensions $E/VF$ that appear.

\begin{lemma}  
  Let $G$ be an unramified reductive group.  There exists a definable
  subassignment of $G\times G$ of all pairs $(\gamma,x)$ such that
  $\gamma$ is semisimple (possibly singular) and $x$ lies in the
  identity component of the centralizer of $\gamma$.
\end{lemma}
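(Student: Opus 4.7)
The plan is to exhibit a first-order formula in the Denef-Pas language defining this subassignment, combining a definable characterization of semisimplicity with an explicit description of $C_G(\gamma)^0$ drawn from the structure theory of reductive groups. Since $G$ is unramified, it splits over an unramified extension $E/F$ whose degree is bounded by a constant depending only on the root datum of $G$, and by Section~\ref{sec:defred} such a bounded-degree extension is treated definably via its cocycle space. Fixing a maximal torus $T \subset G_E$ (also definable), the semisimplicity of $\gamma$ is equivalent to the first-order condition $\exists g \in G(E),\ g\gamma g^{-1} \in T(E)$; equivalently, fixing a faithful representation $G \hookrightarrow \op{GL}_n$, one may require the discriminant of the minimal polynomial of $\gamma$ to be nonzero, which is again first-order in the $VF$ sort.

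For the identity component of the centralizer, I would invoke the classical structure theorem: if $\gamma$ is semisimple and lies in a maximal torus $T'$ of $G$, then $C_G(\gamma)^0$ is generated by $T'$ together with the root subgroups $U_\alpha$ (relative to $T'$) whose root satisfies $\alpha(\gamma) = 1$. Applying the Bruhat decomposition internally to the connected reductive group $C_G(\gamma)^0$ produces a uniform bound $N$ (depending only on the root datum of $G$, e.g.\ $N \le 2|\Psi^+|+1$) such that every element of $C_G(\gamma)^0$ can be written as a product of at most $N$ factors drawn from $T'$ and from the root subgroups $U_\alpha$ with $\alpha(\gamma)=1$. This bounded-length product is the key ingredient that turns ``identity component'' into a first-order expressible condition.

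Putting the pieces together, the defining formula asserts the existence of $g \in G(E)$ with $g\gamma g^{-1} \in T(E)$, of $t \in T(E)$, and of $u_1, \ldots, u_N$, each $u_i$ lying in some root subgroup $U_{\alpha_i}(E)$ whose root satisfies $\alpha_i(g\gamma g^{-1}) = 1$, such that $x = g^{-1}(t u_1 \cdots u_N)g$. Every ingredient is definable: the bounded-degree extension $E$ via its cocycle space, the finite root system as a fixed choice, the root subgroups $U_\alpha$ and character evaluations $\alpha_i(\cdot)$ as polynomial maps, and the bounded-length group product as a composition of the group operation. The main obstacle is replacing the a priori geometric notion of ``irreducible component containing the identity'' with the concrete bounded-length product description; once the structure-theoretic result and the uniform bound $N$ are in hand, writing the formula itself is routine.
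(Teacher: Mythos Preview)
Your route to the identity component is genuinely different from the paper's. The paper invokes Steinberg's theorem that the centralizer of a semisimple element in a simply connected group is connected: writing $x=z_x\bar x_{sc}$ and $\gamma=z_\gamma\bar\gamma_{sc}$ over a large extension $L$, with $z_x,z_\gamma$ central and $x_{sc},\gamma_{sc}\in G_{sc}(L)$, one has $x\in C_G(\gamma)^0$ iff $x_{sc}$ and $\gamma_{sc}$ commute, which is an immediately definable condition. Your approach via the explicit generators $T'$, $U_\alpha$ with $\alpha(\gamma)=1$ and a bounded Bruhat-type word length also works in principle and is more hands-on; the paper's argument is shorter because it replaces the structure theory of $C_G(\gamma)^0$ by a single commutation check.

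There is, however, a genuine gap in your write-up. You assert that semisimplicity of $\gamma$ is equivalent to $\exists g\in G(E)$ with $g\gamma g^{-1}\in T(E)$, where $E$ is the fixed unramified splitting field of $G$. This is false: even when $G$ is split (so $E=F$) there exist semisimple elements lying in anisotropic maximal tori that are not $G(F)$-conjugate into the split torus --- e.g.\ in $\op{PGL}_2$, the class of $\begin{pmatrix}0&1\\a&0\end{pmatrix}$ with $a$ a non-square. The same defect propagates to your identity-component formula, which requires conjugating $\gamma$ into $T$ in order to name the relevant root subgroups. The fix is exactly what the paper does: replace $E$ by a field extension $L/VF$ of degree bounded a priori in terms of the root datum (large enough that any maximal torus of $G$ splits over $L$, so that $\gamma$ becomes $G(L)$-conjugate to an element of $T(L)$). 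With that adjustment your bounded-word argument goes through, since $C_{G_L}(g\gamma g^{-1})^0$ is then a split reductive group over $L$ and the Bruhat decomposition gives the uniform length bound you need. Your alternative characterization of semisimplicity via the minimal polynomial in a faithful representation is correct but does not by itself produce the conjugator $g$ that the rest of your argument uses.
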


\begin{proof}  The condition that $\gamma$ is semisimple 
is definable by the condition that $\gamma$ is conjugate to $T$ by
$G(L)$ for some field extension $L/VF$. 

The commutativity of $x$ and $\gamma$ is obviously definable.  We need
more to describe the identity component in definable terms.  We use
Steinberg's result that the centralizer of a semisimple element in a
simply-connected group is connected.  We work again over a field
extension $L/VF$, and write factorizations $x = z_x \bar x_{sc}$ and
$\gamma = z_\gamma \bar \gamma_{sc}$, where $z_x$ and $z_\gamma$ are
in the center of $G(L)$ and $x_{sc},\gamma_{sc}$ are in the simply
connected cover of the derived group $G_{sc}(L)$.  Then $x$ is in the
identity component of the centralizer iff $x_{sc}$ and $\gamma_{sc}$
commute.  This is a definable condition.
\end{proof}

\begin{lemma} 
  Let $G$ be an unramified reductive group.  There exists a definable
  subassignment of $G\times G$ of all pairs $(\gamma,\gamma')$ such
  that $\gamma$ is semisimple (possibly singular) and $\gamma'$ is
  stably conjugate to $\gamma$.
\end{lemma}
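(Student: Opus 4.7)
The plan is to express stable conjugacy as a first-order condition over an auxiliary field extension $L/VF$ of uniformly bounded degree, following the same idea used in the previous lemma but adding a finite disjunction over the Weyl group. Recall that $\gamma, \gamma' \in G(F)$ are stably conjugate if they are $G(\bar F)$-conjugate, and for semisimple elements this can be detected over a finite extension $L$ whose degree is bounded in terms of $|W|$ and the unramified splitting degree $r$ of $G$. Since both $|W|$ and $r$ are fixed choices attached to $G$, there is a fixed bound $d = d(G)$ such that every semisimple element of $G(F)$ is $G(L)$-conjugate to an element of $T(L)$ for some $L/F$ of degree dividing $d$. Extensions of degree at most $d$ can be parametrized definably by the coefficients of their minimal polynomials, just as unramified extensions are handled via the cocycle space $Z$ in Section \ref{sec:defred}.

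First I would set up the definable family of extensions: introduce an auxiliary tuple of parameters in $h[d,0,0]$ coding a monic polynomial of degree $d$ and view $L = VF[x]/(p)$ as a definable $d$-dimensional $VF$-algebra whose field structure is forced by an additional irreducibility clause. All subsequent constructions (the group $G(L)$, the maximal torus $T(L)$, conjugation) are definable in $L$ as in Section \ref{sec:defred}. Second, following the previous lemma, I would assert the existence of elements $g, g' \in G(L)$ and $t, t' \in T(L)$ with $g \gamma g^{-1} = t$ and $g' \gamma' g'^{-1} = t'$; this is definable and simultaneously forces semisimplicity of $\gamma$ (and of $\gamma'$ as a witness of stable conjugacy). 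Third, I would express stable conjugacy by the condition $\exists\, w \in W : w t w^{-1} = t'$, which, because $W$ is a fixed finite Weyl group acting definably on $T$, unfolds into a finite disjunction of equations in $T(L)$. The conjunction of these clauses, existentially quantified over the parameters of $L$ and over $g, g', t, t', w$, is a formula in the Denef-Pas language and defines the desired subassignment.

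The main obstacle is justifying that the bound $d$ is uniform and that $W$-conjugacy in $T(L)$ really captures stable conjugacy for possibly singular semisimple elements. For the bound, one uses that any maximal torus of $G$ over $F$ splits over an extension whose Galois group embeds into $W \rtimes \mathrm{Gal}(E/F)$, where $E/F$ is the unramified splitting extension of $G$; this gives $d \mid |W|\cdot r$. For the characterization, the standard fact is that two semisimple elements of $G$ are $G(\bar F)$-conjugate if and only if, after conjugating both into a common maximal torus $T$ over $\bar F$, their images coincide in $T/W$; this holds without regularity assumptions because the coordinate ring of $T/W$ is the ring of invariants and therefore separates $W$-orbits in $T(\bar F)$. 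Both facts are general reductive-group statements and do not require any conditions beyond those already present.

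Finally, strictly speaking one should also note that the witnessing conjugators $g, g'$ and torus elements $t, t'$ can themselves be assumed to lie in $G(L)$ and $T(L)$ for the same $L$ (by enlarging $L$ if necessary within the bound $d$), so only a single auxiliary field parameter is needed. This last point is routine and completes the construction of the definable subassignment.
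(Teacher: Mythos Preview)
Your argument has a genuine gap in the very first step: you assert that semisimple $\gamma,\gamma'\in G(F)$ are stably conjugate if and only if they are $G(\bar F)$-conjugate, and then reduce to $W$-conjugacy in $T$. That equivalence is correct only when the centralizer $C_G(\gamma)$ is connected, e.g.\ for strongly regular $\gamma$. For singular semisimple elements the definition of stable conjugacy (Langlands, Kottwitz) requires the extra cocycle condition: there must exist $g\in G(\bar F)$ with $g\gamma g^{-1}=\gamma'$ \emph{and} $\sigma(g)g^{-1}\in C_G(\gamma)^0$ for every $\sigma$ in the Galois group. When $C_G(\gamma)$ is disconnected this is strictly stronger than geometric conjugacy, so your $W$-conjugacy test in $T(L)$ defines too large a set.

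The paper's proof uses exactly this cocycle formulation: it asks for $g\in G(L)$ with $\gamma^g=\gamma'$ and with each $\sigma(g)g^{-1}$ lying in $C_G(\gamma)^0$, handling the Galois group via the enumerated-Galois-group framework and invoking the immediately preceding lemma (definability of the set of pairs $(\gamma,x)$ with $x\in C_G(\gamma)^0$) for the identity-component condition. Your bounded-degree parametrization of $L$ is fine and is implicitly what the paper does as well; the missing ingredient is precisely the connected-centralizer clause, which cannot be replaced by the Chevalley map $G\to T/W$ in the singular case.
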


\begin{proof} The elements $\gamma$ and $\gamma'$ are stably conjugate
  iff there exists $g\in G(L)$ for some Galois extension $L/VF$ such
  that $\gamma^g = \gamma'$ and $\sigma(g)g^{-1} \in C_G(\gamma)^0$,
  where $\sigma\in \op{Gal}(L/VF)$.  The Galois group and its cocycles
  are treated within the Denef-Pas language as in Section
  \ref{sec:galois}.  The identity component is handled by the previous
  lemma.
\end{proof}

\begin{lemma} 
  Let $G$ be an unramified reductive group, given as a definable
  subassignment over a cocycle space $Z$.  There is a definable
  subassignment $G_{qs}$ (resp. $G_u$) of $G$ over $Z$ consisting of 
  semisimple elements $\gamma$ such that the identity component of
  the centralizer of $\gamma$ is quasi-split (resp. unramified).
\end{lemma}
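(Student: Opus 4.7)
Plan. The goal is to express the conditions ``$C_G(\gamma)^0$ is quasi-split'' and ``$C_G(\gamma)^0$ is unramified'' on a semisimple element $\gamma\in G$ as first-order formulas in the Denef--Pas language, so that the corresponding subsets form definable subassignments over the cocycle space $Z$. The key enabling fact is that the ambient group $G$ is given as a closed subgroup of some $\op{GL}(n)$ attached to a fixed-choice root datum, so there are a priori bounds, depending only on that fixed choice, on the dimensions of all centralizers, Borel subgroups, maximal tori, and splitting fields that arise.

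For $G_{qs}$, I would use that a connected reductive group is quasi-split iff it contains a Borel subgroup defined over the base field. Let $M=C_G(\gamma)^0$. A Borel subgroup $B\subseteq M$ is determined by its Lie algebra $\mathfrak{b}\subseteq \g$, and candidate subspaces $\mathfrak{b}$ of a fixed dimension in $\g$ are parameterized by a Grassmannian of bounded size. The predicates ``$\mathfrak{b}\subseteq \op{Lie}(M)$'', ``$\mathfrak{b}$ is a Borel subalgebra of $\op{Lie}(M)$'' (e.g.\ a maximal solvable subalgebra of the correct dimension), and ``$B:=N_M(\mathfrak{b})$ is defined over $F$'' are all first-order in $(\gamma,\mathfrak{b})$, once one knows by the preceding lemma that $x\in C_G(\gamma)^0$ is definable in $(\gamma,x)$. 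Existentially quantifying over $\mathfrak{b}$ in the Grassmannian cuts out $G_{qs}$ from the definable subassignment of semisimple elements.

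For $G_u$ I would additionally require that $M$ splits over an unramified extension of $F$. Since the absolute rank of $M$ is bounded by the rank of $G$, the degree of a splitting field is bounded by $|W|$, a fixed choice. Unramified extensions of bounded degree are treated as definable cocycle data in the style of Section~\ref{sec:defred} and Section~\ref{sec:galois}, so I can quantify over such a field $E/VF$ together with its quasi-Frobenius. The additional condition is then the existence of a maximal torus $T'\subseteq B\subseteq M$ defined over $F$ such that $T'_E$ is split, where the splitness of a torus over a given field is the definable condition that its character lattice action of $\op{qFrob}$ is trivial. Combined with the quasi-split formula from the previous paragraph, this gives $G_u$.

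The main obstacle is uniformity: to obtain a single formula valid for all $\gamma$ simultaneously, all the existentially quantified objects ($\mathfrak{b}$, $T'$, the unramified extension $E/VF$) must range over a definable family of bounded dimension. This is exactly what the fixed-choice root datum of $G$ provides, and it is the same mechanism by which reductive groups and their standard subgroups were rendered definable in Section~\ref{sec:defred}. A minor technical point is that $\exp$ is unavailable in positive characteristic, so the passage from the Borel subalgebra $\mathfrak{b}$ to the Borel subgroup $B$ must be done via $B=N_M(\mathfrak{b})$; but this is a standard definable construction and is already implicit in the previous lemmas.
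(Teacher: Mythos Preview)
Your strategy is sound but takes a different and heavier route than the paper. The paper avoids Grassmannians and Lie algebras entirely by using the following elementary criterion: a connected reductive group $M$ is quasi-split iff there exists an element $a\in M$ lying in a split torus (equivalently, $a$ is $G$-conjugate to an element of the fixed maximal split torus $A$) whose centralizer $C_M(a)$ is a Cartan subgroup of $G$. Both clauses are visibly first-order once one has the earlier lemma that $x\in C_G(\gamma)^0$ is definable. For $G_u$, the paper adds the clause that some $t\in M$ has centralizer equal to an \emph{unramified} Cartan subgroup, meaning one that is $G(E)$-conjugate to the fixed $T$ for some unramified $E/VF$ of bounded degree; this again quantifies only over group elements.

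Compared with your proposal, the paper's argument sidesteps several of the technical points you would have to fill in: stratifying by $\dim C_G(\gamma)^0$ to pin down the ``correct'' Borel dimension (which varies with $\gamma$, though $\op{rk}M=\op{rk}G$ is fixed); covering the Grassmannian by affine charts; and ensuring that the Lie-algebra/subgroup correspondence $\mathfrak b\leftrightarrow N_M(\mathfrak b)$ behaves well in positive characteristic. Your unramified criterion (``the character lattice action of $\op{qFrob}$ is trivial'') is also imprecise as stated, since the character lattice of a \emph{variable} torus $T'$ is not directly a Denef--Pas object; the workable formulation is exactly the paper's, namely that $T'$ is $G(E)$-conjugate to the fixed split $T$. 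None of these are fatal, but the paper's criterion is both shorter and closer to the level of abstraction already set up in Section~\ref{sec:defred}.
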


\begin{proof} 
  A reductive group is quasi-split iff the Levi subgroup of the
  minimal parabolic subgroup is a Cartan subgroup.  This occurs iff
  the centralizer of the split component of the center of the Levi is
  a Cartan subgroup.

  The maximal split torus $A$ of $T$ is a definable set.  The
  condition for $a$ to belong to a split torus is definable by the
  condition that $a$ is conjugate to an element of $A$.

  The centralizer $C=C_G(\gamma)$ is quasi-split exactly when there
  exists $a\in C$ that is conjugate to an element of $A$ and such that
  its centralizer in $C$ is a Cartan subgroup of $G$.  This is a
  definable condition.

  Now turning to $G_u$, the group $C=C_G(\gamma)^0$ is unramified
  exactly when $C$ is quasi-split and there exists $t\in C$ whose
  centralizer is an unramified Cartan subgroup of $G$.  An unramified
  Cartan subgroup is one that is conjugate to $T$ by $G(E)$ for some
  unramified extension $E/VF$.  This is a definable condition in the
  Denef-Pas language.
\end{proof}

A {\it strongly compact element} $\gamma\in G(F)$ is an element that
belongs to a bounded subgroup of $G$.

\begin{lemma} 
The set of strongly
  compact semisimple elements is a definable subassignment.
  The set of topologically unipotent semisimple elements in a
  reductive group is a definable subassignment.  
\end{lemma}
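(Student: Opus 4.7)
The plan is to translate both topological conditions into conditions on the characteristic polynomial of $\gamma$, which then become Presburger conditions on the valuations of its coefficients, and hence are definable in the Denef-Pas language by the same mechanism used in the previous lemmas of Section \ref{sec:definability}.

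First I would embed $G$ as a closed subgroup of $\op{GL}(n,E)$, where $E/VF$ is an unramified extension of bounded degree, as part of the definable structure of an unramified reductive group (Section \ref{sec:defred}). For a semisimple $\gamma \in G$, set $P_\gamma(x) = \det(xI - \gamma) \in E[x]$, with coefficients $c_0,\ldots,c_{n-1},1$. The coefficients are polynomial functions of the matrix entries of $\gamma$, hence are definable functions $G \to E$ in the obvious sense; using a fixed $\mathcal{O}_{VF}$-basis of $\mathcal{O}_E$, conditions on $\op{ord}$ and $\op{ac}$ of these coefficients translate into conditions on finitely many coordinates in the valued-field sort $VF$.

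For the strongly compact claim, a semisimple $\gamma$ generates a bounded subgroup of $G$ if and only if, viewed in $\op{GL}(n,\bar F)$, all its eigenvalues are units. This holds if and only if every coefficient $c_i$ of $P_\gamma$ lies in $\mathcal{O}_E$ and the constant term $c_0 = (-1)^n \det(\gamma)$ is a unit. Both are expressed by
\[
\op{ord}(c_i) \ge 0 \quad (0 \le i \le n-1), \qquad \op{ord}(c_0) = 0,
\]
which are Presburger-definable conditions on the valuations of the coefficients. For the topologically unipotent claim, a semisimple $\gamma$ satisfies $\gamma^{p^m} \to 1$ iff each eigenvalue lies in $1 + \mathfrak{m}_{\bar E}$, which is equivalent to $P_\gamma(x) \equiv (x-1)^n$ modulo the maximal ideal of $\mathcal{O}_E[x]$. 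Writing $P_\gamma(x) - (x-1)^n = \sum_i d_i x^i$, this becomes $\op{ord}(d_i) \ge 1$ for all $i$, again a Presburger condition on definable functions of $\gamma$.

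The main (minor) obstacle is bookkeeping: one must check that "bounded in $G(F)$" agrees with "bounded in $\op{GL}(n,E)$" for a faithful representation (standard for reductive groups, since the image is Zariski-closed), and one must route the conditions on coefficients in $E$ through coordinates in $VF$ using the cocycle-space presentation of the unramified extension. Neither point poses any serious difficulty; both simply reuse the definability machinery for bounded-degree extensions already invoked in the preceding lemmas. The resulting subassignments defined by the displayed conditions then yield the strongly compact and the topologically unipotent semisimple loci, respectively.
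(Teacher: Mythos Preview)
Your argument is correct but takes a different route from the paper. The paper's proof conjugates $\gamma$ by an element of $G(L)$, for a bounded-degree extension $L/VF$, into the split torus $T(L)$ (reusing the definability of such conjugacy established earlier), and then imposes $\op{ord}(\lambda(t))=0$ for strongly compact, and additionally $\op{ac}(\lambda(t))=1$ for topologically unipotent, as $\lambda$ ranges over a finite generating set of $X^*(T)$. You instead stay in the fixed faithful representation $G\hookrightarrow\op{GL}(n,E)$ and test the coefficients of the characteristic polynomial: integrality plus unit constant term for strongly compact, and congruence to $(x-1)^n$ modulo $\mathfrak m$ for topologically unipotent. Your approach avoids invoking the conjugate-to-$T$ lemma, at the cost of the (routine) check that boundedness in $G(F)$ agrees with boundedness in $\op{GL}(n,E)$ for a closed embedding. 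Both arguments end up with Presburger conditions on finitely many valuations and angular components, so they are of comparable length and difficulty; the paper's version is slightly more intrinsic to $G$, while yours is more hands-on and independent of the earlier lemmas.
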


\begin{proof} We can define a strongly compact semisimple element as
  one that is conjugate in $G(L)$ to an element $t\in T(L)$ of the
  split torus, for suitable large extension $L/VF$
  (of fixed degree), and such that the valuation of $\lambda(t)$ is
  $0$ for every $\lambda\in X^*(T)$.  It is enough to let $\lambda$ run
  over a finite set of generators of $X^*(T)$.  We can define topologically
  unipotent elements as elements conjugate to an element $t\in T$ by
  $G(L)$, such the valuation of $\lambda(t)$ is $0$ and the angular
  component of $\lambda(t)$ is $1$ for each $\lambda$.
\end{proof}

\begin{lemma} 
  Let $G$ be an unramified reductive group with unramified endoscopic
  group $H$, given as definable subassignments over a common cocycle
  space $Z$.  There is a definable subassignment of all pairs
  $(\gamma,\gamma_H)$ such that $\gamma_H$ is strongly $G$-regular in
  $H$ and $\gamma\in G_u$ is an image of $\gamma_H$.

  Moreover, consider the
  Denef-Pas statement $\psi$ that asserts that for all strongly
  $G$-regular elements $\gamma_H$ in $H$, there exists an image
  $\gamma\in G_{qs}$ that is an image of $\gamma_H$.  Then there
  exists $N$ such that $\psi_F$ is true for all $F\in\C_N$.
\end{lemma}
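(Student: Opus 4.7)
My plan has two parts, matching the structure of the statement.

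First, for the definability of the set of pairs $(\gamma,\gamma_H)$: I would unwind ``$\gamma$ is an image of $\gamma_H$'' into a definable condition via admissible embeddings of maximal tori. Explicitly, $\gamma$ is an image of $\gamma_H$ iff there exist maximal tori $T_H\subseteq H$ and $T\subseteq G$ containing $\gamma_H$ and $\gamma$ respectively, together with an $F$-rational isomorphism $j:T_H\to T$ that is admissible in the sense that $j$ arises (over the algebraic closure) from the endoscopic inclusion $T_H\hookrightarrow \hat T\subseteq \hat G$ composed with an inner automorphism of $G$, and $j(\gamma_H)=\gamma$. Both tori split over an extension of $VF$ of degree bounded by $|W\rtimes\Gamma|$, so the existence of such tori and of a Weyl-group-valued cocycle describing the Galois descent of $j$ is expressible as a first-order Denef-Pas formula, using the treatment of abstract Galois groups from Section~\ref{sec:galois}. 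Strong $G$-regularity of $\gamma_H$ is the condition that $\alpha(\gamma_H)\ne 1$ for every root $\alpha$ of $G$ with respect to $T_H$ via $j$, which is manifestly definable. The constraint $\gamma\in G_u$ can then be imposed using the preceding lemma.

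Second, for the moreover statement: since $G$ is unramified, it is in particular quasi-split, so $G$ equals its quasi-split inner form $G^*$. I would invoke the classical fact that for quasi-split $G$, every strongly $G$-regular stable conjugacy class in $H$ has an image in $G(F)$: the obstruction to descending an admissible isomorphism $j$ from $\bar F$ to $F$ lies in the kernel of $H^1(F,T_H)\to H^1(F,G)$, and this kernel is trivial when $G$ is quasi-split by a theorem of Kottwitz. The identity component of the centralizer of the resulting image $\gamma$ is a form of $T_H$ which, through the endoscopic datum, is itself quasi-split; hence $\gamma\in G_{qs}$. This classical statement in characteristic zero is then transferred to all $F\in \C_N$ for $N$ sufficiently large by the abstract transfer principle, the hypotheses of which are satisfied precisely because the formula $\psi$ and its ingredients have been made first-order in the previous paragraph.

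The main obstacle is the first step: making admissibility of $j$ genuinely first-order in the Denef-Pas language. One must encode simultaneously the cocycle defining $T_H$ as a form of a fixed split torus, the Weyl-group-valued cocycle matching $T_H$ to $T$, and the endoscopic twist relating $T_H\subseteq H$ to $T\subseteq G$, all referring only to extensions of $VF$ of bounded degree. This bookkeeping is routine but notationally heavy; once it is in place, definability of ``$\gamma_H$ is strongly $G$-regular and $\gamma\in G_u$ is an image of $\gamma_H$'' follows by combining the admissibility formula with the previous lemmas, and the moreover part reduces to the characteristic-zero existence of images together with the transfer principle.
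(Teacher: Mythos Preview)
Your overall architecture matches the paper's: assert (with more detail than the paper bothers to give) that ``$\gamma$ is an image of $\gamma_H$'' is a first-order condition, then for the ``moreover'' clause invoke Kottwitz in characteristic zero and push to positive characteristic via the transfer principle. The paper's own proof is two sentences long and does exactly this, citing \cite[3.3]{kottwitz1982rational} and Section~\ref{sec:transfer}.

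Two corrections to your second paragraph. First, your cohomological justification is misstated: the kernel of $H^1(F,T_H)\to H^1(F,G)$ is \emph{not} trivial in general, even for quasi-split $G$ (for $G$ simply connected it is all of $H^1(F,T_H)$, since $H^1(F,G)=0$). The obstruction to descending an admissible embedding to $F$ is rather that a certain class in $H^1(F,G)$ vanish, and this is what Kottwitz/Steinberg guarantee for quasi-split $G$; the paper simply cites Kottwitz without unwinding the cohomology. Second, your sentence ``the identity component of the centralizer \ldots is a form of $T_H$ which, through the endoscopic datum, is itself quasi-split'' is overcomplicated: since $\gamma_H$ is strongly $G$-regular, any image $\gamma$ is strongly regular in $G$, so $C_G(\gamma)^0$ is a maximal torus and hence trivially quasi-split. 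Thus $\gamma\in G_{qs}$ is automatic once an image exists, and the content of the ``moreover'' part is purely the existence of images (plus transfer). The paper's invocation of Kottwitz's quasi-split-centralizer result is phrased for the general semisimple case, but for strongly $G$-regular elements it collapses to the existence statement.
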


\begin{proof}  It is a definable condition to say that $\gamma$ is an
  image of $\gamma_H$.

  Moreover, by Kottwitz \cite[3.3]{kottwitz1982rational}, in
  characteristic zero, there exists $\gamma'$ in the stable conjugacy
  class of $\gamma$ such that $C_G(\gamma')$ is quasi-split.  The same
  result holds in large positive characteristic by the transfer
  principle (Section \ref{sec:transfer}).
\end{proof}

\begin{definition} 
  We say that $\gamma\in G(F)$ is {\it strongly semisimple} if $\gamma$
  is an element of a torus that splits over an unramified extension
  $E/VF$, if it is strongly compact, and if for every root $\alpha\in
  \Phi$ of $(T,G)$ and for every $g\in GL(E)$ such $\gamma^g\in T(E)$,
  we have
  \[
  \alpha(\gamma^g) = 1,\quad\text{or}
\quad \op{ord}(\alpha(\gamma^g)-1)=0.
  \]
 \end{definition}

 \begin{lemma} The set of strongly semisimple elements of an
   unramified reductive group $G$ is a definable set.
   \end{lemma}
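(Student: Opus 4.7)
The plan is to express the condition of being strongly semisimple as a finite Boolean combination of definable conditions built from ingredients already shown to be definable in the preceding lemmas. Throughout, we use that the Langlands dual data (in particular the root datum $(X^*,\Phi,X_*,\Phi^\vee)$, the Weyl group $W$, and the Galois action) is a fixed choice, and that because $G$ is unramified its standard maximal torus $T$ splits over an unramified extension $E_0/VF$ whose degree is bounded by a constant depending only on the cocycle space $Z$. Consequently every unramified maximal torus of $G$ is $G(E)$-conjugate to $T$ for some unramified extension $E/VF$ of a priori bounded degree, which is handled definably as in Section \ref{sec:defred} and Section \ref{sec:galois}.

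First I would rewrite the condition \textquotedblleft $\gamma$ belongs to a torus that splits over an unramified extension\textquotedblright\ as: there exists an unramified extension $E/VF$ (of bounded degree) and there exists $g\in G(E)$ such that $\gamma^g\in T(E)$. This is a definable existential condition of the same form used in the proof of the previous lemma characterizing unramified Cartan subgroups. Next, the condition that $\gamma$ is strongly compact is definable by the preceding lemma on strongly compact semisimple elements (in fact, once $\gamma^g\in T(E)$, strong compactness is equivalent to $\op{ord}(\lambda(\gamma^g))=0$ for each $\lambda$ in a fixed finite generating set of $X^*(T)$, which is a finite conjunction of definable conditions).

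For the root condition, the original definition universally quantifies over all $g\in G(E)$ sending $\gamma$ into $T(E)$. Any two such conjugators differ by an element of the normalizer $N_G(T)(E)$, and hence the $T(E)$-images of $\gamma$ under the various $g$ form a single $W$-orbit in $T(E)$. Since $\alpha(w\cdot t) = (w^{-1}\alpha)(t)$ and $W$ permutes the finite root system $\Phi$, the universal over $g$ collapses to a universal over the fixed finite set $\Phi$. Thus, after fixing one $g_0$ with $\gamma^{g_0}\in T(E)$, the condition becomes the finite conjunction
\[
\bigwedge_{\alpha\in\Phi}\Bigl(\alpha(\gamma^{g_0})=1\ \lor\ \op{ord}(\alpha(\gamma^{g_0})-1)=0\Bigr),
\]
each clause of which is definable in the Denef-Pas language (a polynomial identity, a disjunction, and an $\op{ord}$ condition on the valued-field sort). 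Conjoining this with the existential over $(E,g_0)$ and the strong compactness condition yields a Denef-Pas formula cutting out the strongly semisimple locus.

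The main obstacle, and the step that requires the most care, is the reduction of the \textquotedblleft for all $g$\textquotedblright\ quantifier to the finite universal over $\Phi$: one must check that the condition $\alpha(\gamma^g)=1$ or $\op{ord}(\alpha(\gamma^g)-1)=0$ is genuinely $W$-invariant (so that changing the conjugator $g$ only permutes the roots being tested) and that, because $\Phi$ is a fixed choice, this reduction produces a formula of bounded length independent of $\gamma$. Once this is secured, everything else is assembled from the earlier definability lemmas and the standard definable handling of unramified field extensions and Galois-theoretic data in Sections \ref{sec:defred}--\ref{sec:galois}.
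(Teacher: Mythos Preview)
Your proposal is correct and matches the spirit of the paper's one-line proof (``This is clear from the definition and from the previous constructions''); you have simply made explicit what the paper leaves to the reader.

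Two minor remarks. First, your intermediate claim that ``any two such conjugators differ by an element of the normalizer $N_G(T)(E)$'' is not correct when $\gamma$ is singular: if $\gamma^{g_1},\gamma^{g_2}\in T(E)$ then $g_2 g_1^{-1}$ lies in the transporter of $\gamma^{g_1}$ to $\gamma^{g_2}$, not in $N_G(T)$. The conclusion you need---that the images $\gamma^g$ form a single $W$-orbit in $T(E)$---still holds, but by the standard fact that $G$-conjugate elements of a maximal torus are already Weyl-conjugate. Second, the whole reduction of the universal over $g$ to a single $g_0$ is a pleasant simplification but is not needed for definability: the Denef-Pas language permits universal quantifiers over the valued-field sort, so the clause ``for every $g\in G(E)$ with $\gamma^g\in T(E)$, \dots'' is already first-order as it stands. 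Either route yields the lemma.
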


   \begin{proof} This is clear from the definition and from the
     previous constructions.
   \end{proof}

   \begin{remark} The absolutely semisimple part $\gamma_s$ in the
     topological Jordan decomposition of an element $\gamma = \gamma_s
     \gamma_u$ is defined as a $p$-adic limit that cannot be treated
     within the Denef-Pas language \cite{hales1993simple}.  Instead,
     we allow $\gamma_s$ to run over a definable set of strongly
     semisimple elements.  We can no longer assert the uniqueness of
     the topological Jordan decomposition, but we obtain the existence
     of a definable decomposition, which is sufficient for our
     purposes.
    \end{remark}

\begin{lemma}[definable topological Jordan decomposition] 
  Let $G$ be an unramified reductive group.  There is a definable
  subassignment of triples $(\gamma,\gamma_s,\gamma_u)\in G^3$ such
  that $\gamma$ is strongly regular semisimple and strongly compact,
  $\gamma_s$ is strongly semisimple,
  $\gamma_u$ is
  topologically unipotent, 
  $\gamma = \gamma_s \gamma_u = \gamma_u\gamma_s$,  and
\[
\alpha(\gamma_s)=1,
\quad\text{ or }
\quad \op{ord}(\alpha(\gamma_s)-1)=0,
\]
for all  roots $\alpha$ of the Cartan subgroup $C_G(\gamma)$.

Moreover, consider statement $\psi$ in the Denef-Pas language that
asserts that for every strongly-regular strongly-compact semisimple
element $\gamma$, there exists $\gamma_s$ and $\gamma_u$ such that
$(\gamma,\gamma_s,\gamma_u)$ belongs to this definable set of triples.
Then there exists $N$ such that $\psi_F$ holds for all $F\in \C_N$.
\end{lemma}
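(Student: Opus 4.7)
The plan is to split the lemma into two parts: (a) the existence of the definable subassignment of triples $(\gamma,\gamma_s,\gamma_u)$ satisfying the listed properties, and (b) the existence statement $\psi$ that every strongly-regular strongly-compact semisimple $\gamma$ admits such a triple, which must be shown to hold over $\C_N$ for some $N$.

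For part (a), each of the defining conditions is definable by invoking the preceding lemmas of Section \ref{sec:definability}. Strong regularity combined with strong compactness for $\gamma$ was treated earlier, as was strong semisimplicity for $\gamma_s$ (in the definition just above) and topological unipotence for $\gamma_u$. The algebraic identities $\gamma = \gamma_s\gamma_u = \gamma_u\gamma_s$ are polynomial in the valued field sort and so obviously definable. The root condition on $\gamma_s$ relative to the Cartan subgroup $C_G(\gamma)$ is handled by passing to a splitting extension $L/VF$ of bounded degree for that Cartan subgroup; using the apparatus of Section~\ref{sec:defred} to realize $L$ and its Galois action definably, one writes $\gamma_s^g \in T(L)$ for some $g \in G(L)$, after which $\alpha(\gamma_s^g)$ becomes a polynomial expression for each of the finitely many fixed characters $\alpha \in X^*(T)$. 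The disjunction $\alpha(\gamma_s^g)=1 \lor \op{ord}(\alpha(\gamma_s^g)-1)=0$ is then expressible in the Denef-Pas language. Intersecting all of these definable conditions yields the desired subassignment of $G^3$.

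For part (b), the classical topological Jordan decomposition (see \cite{hales1993simple}) asserts that in characteristic zero every strongly-regular strongly-compact semisimple $\gamma$ admits a unique factorization $\gamma = \gamma_s\gamma_u$ of the required type, with $\gamma_s$ obtained as the $p$-adic limit of prime-to-$p$ powers of $\gamma$. This existence statement is exactly the sentence $\psi$ in the Denef-Pas language built from the definable conditions of part (a), interpreted over all $p$-adic fields $F$ of characteristic zero of sufficiently large residue characteristic. By the transfer principle for constructible motivic integration (Section~\ref{sec:transfer}), a Denef-Pas sentence that holds for all $F \in \C_N$ with $F$ of characteristic zero and $N$ large also holds for all $F \in \C_{N'}$ of positive characteristic with $N'$ possibly larger. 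Combining the two yields $N$ such that $\psi_F$ is true for every $F \in \C_N$.

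The main obstacle is the uniform treatment of the root condition in part (a), since a priori the roots live in the character lattice of the $\gamma$-dependent Cartan subgroup $C_G(\gamma)$, not of the fixed torus $T$. The resolution is to exploit that $C_G(\gamma)$ is $G(L)$-conjugate to $T$ for an unramified extension $L/VF$ whose degree is bounded by $|W|$, and to encode $L$ and the conjugating element $g \in G(L)$ within the Denef-Pas framework as in Section~\ref{sec:defred}; this reduces the condition to a finite, fixed-choice set of characters of $T$ evaluated on polynomial expressions, which is visibly definable. The only remaining subtlety, the loss of uniqueness of the decomposition in positive characteristic, is harmless because we only need existence of the triple.
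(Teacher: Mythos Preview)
Your proposal is correct and follows essentially the same two-step approach as the paper: definability of the triple set from the earlier lemmas of Section~\ref{sec:definability}, followed by existence in characteristic zero via the classical topological Jordan decomposition of \cite{hales1993simple} and passage to positive characteristic by the transfer principle of Section~\ref{sec:transfer}. The paper's own proof is much terser and simply points to those ingredients, whereas you spell out the root-condition step in detail; this elaboration is consistent with the paper's treatment, since the root condition on $\gamma_s$ relative to $C_G(\gamma)$ is exactly the content of the strongly-semisimple definition once one conjugates into the fixed torus $T$.

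One small slip: you assert that the splitting extension $L/VF$ for $C_G(\gamma)$ is unramified. That need not be the case for an arbitrary Cartan subgroup of an unramified group; what is true, and what the earlier lemmas use, is that $L/VF$ has \emph{bounded degree}, and that is all that is required to place $L$, the conjugating element $g$, and the evaluations $\alpha(\gamma_s^g)$ inside the Denef-Pas framework. Replacing ``unramified'' by ``of bounded degree'' fixes this without altering your argument.
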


\begin{proof} The definability follows by previous lemmas on the
  definability of the set of strongly compact elements,
  strongly-semisimple elements, and topologically unipotent elements.

The statement $\psi_F$ holds for $p$-adic fields of characteristic
zero by the existence of a topological Jordan decomposition
\cite{hales1993simple}.  It also hold in sufficiently large positive
characteristic by a transfer principle for statements in the Denef-Pas
language (Section \ref{sec:transfer}).
\end{proof}

\subsection{spherical Hecke algebra 
for an unramified definable group}

Let $G$ be a definable unramified reductive group over a cocycle space
$Z$.  Let $A$ be a maximal split torus in $G$ of dimension $r$.  We
identify its cocharacter lattice  $X_*(A)$ with $\ring{Z}^r$ by a
choice of free generators of $X_*(A)$.  This allows us to treat
$X_*(A)$ as the definable subassignment $h[0,0,r] = \ring{Z}^r$.  Let
$X^*(A)$ be the character lattice  of $A$.

There is a perfect pairing $\langle\cdot,\cdot\rangle:X^*(A)\times
X_*(A) \to \ring{Z}$.  For each $\lambda\in X_*(A)$, there is a
definable subassignment $A_\lambda \subseteq A$ given by the formula
\begin{equation}\label{eqn:alambda}
A_\lambda = \{ a \in A \mid \op{ord}(\mu(a)) 
= \ang{\mu,\lambda},\text{ for all } \mu\in X^*(A) \}.
\end{equation}
There is a definable subassignment of $X_*(A)\times A$ given by pairs
$(\lambda,a)$ such that $a\in A_\lambda$.  Of course, $p$-adically,
$A_\lambda$ is just the coset $\varpi^\lambda A(O_F)$, where $O_F$ is
the ring of integers of $F$.

Recall $P^+\subseteq X_*(A)$ is the set of cocharacters in the positive
Weyl chamber.

\begin{lemma} 
  $P^+$ is a definable subset (of $\ring{Z}^r$).
\end{lemma}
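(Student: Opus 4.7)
The plan is essentially to observe that $P^+$ is cut out by finitely many $\ring{Z}$-linear inequalities, each of which is expressible as an atomic formula in the Presburger sublanguage of the Denef-Pas language.

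First I would make the setup precise. The cocharacter lattice $X_*(A)$ has been identified with $\ring{Z}^r = h[0,0,r]$ by a fixed choice of free generators; this identification is a fixed choice in the sense of Section~3.1, independent of the valued-field sort. The set $\Delta_1 = \Delta(\Psi^{red})$ of simple roots of $\Psi^{red}$ is a finite subset of $X^*(A)$ and is likewise a fixed choice, determined by the root datum of $G$ and the automorphism $\theta$. Writing each simple coroot as $\alpha^\vee = (a^\alpha_1,\ldots,a^\alpha_r) \in \ring{Z}^r$ with respect to the dual basis, the pairing $\ang{\lambda,\alpha^\vee}$ becomes the explicit integer-coefficient linear combination $a^\alpha_1 \lambda_1 + \cdots + a^\alpha_r \lambda_r$.

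Next I would observe that this expression is a term in the Presburger language: each $a^\alpha_i \lambda_i$ is obtained by repeated addition or subtraction of $\lambda_i$ (since $a^\alpha_i$ is a specific integer, not a free variable), and the sum over $i$ is then a Presburger term. The inequality $\ang{\lambda,\alpha^\vee} \ge 0$ is therefore a Presburger atomic formula using the symbol $(\le)$. Taking the conjunction over the finite set $\alpha \in \Delta_1$, we obtain a Presburger formula that defines $P^+$. Consequently $P^+$ is a definable subset of $\ring{Z}^r$, and in fact a Presburger constructible subset in the sense of Section~3.3.

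There is essentially no obstacle here: the only point requiring care is confirming that $\Delta_1$ and the explicit integer coordinates of each $\alpha^\vee$ are fixed choices rather than data depending on the valued field $F$ or its residue field, so that no quantification over $VF$ or $RF$ enters the defining formula. This is immediate from the construction of $\Psi^{red}$ from the root datum of $G^*$ and the pinned automorphism $\theta$ in Section~1.1, both of which are specified at the level of the abstract root datum.
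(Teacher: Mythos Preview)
Your proposal is correct and takes essentially the same approach as the paper, which simply notes that $P^+$ is defined by linear inequalities expressible in the Presburger language. Your version is more detailed in verifying that the coefficients of the inequalities are fixed choices, but the underlying argument is identical.
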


\begin{proof} 
  $P^+$ is defined by linear inequalities, which can be expressed in
  the Presburger language.
\end{proof}

\begin{lemma}[Cartan decomposition] \label{lemma:cartan}
  There is a definable subassignment of $P^+\times G$ given by 
\[
L_G = \{(\lambda,g)\in P^+\times G \mid g \in K A_\lambda K \}.
\]
The fiber $L_G(\lambda)$ over each $\lambda\in P^+$ is definable.
Moreover, $L_G(\lambda)\cap L_G(\lambda') = \emptyset$, for
$\lambda\ne \lambda'$.
\end{lemma}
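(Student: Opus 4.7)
The plan is to realize $L_G$ as an existentially quantified formula built out of ingredients that are already definable, and then invoke the classical Cartan decomposition for the disjointness clause.

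First I would recall from Section~\ref{sec:defred} that the hyperspecial subgroup $K$ is a definable subassignment of $G$, and that the maximal split torus $A$ is a definable subassignment of $T$. By formula (\ref{eqn:alambda}), the condition $[a\in A_\lambda]$ is a definable subassignment of $X_*(A)\times A$: it is expressed by the conjunction (over a finite generating set of $X^*(A)$) of the Presburger equalities $\op{ord}(\mu(a)) = \langle \mu,\lambda\rangle$. The set $P^+\subseteq X_*(A)=\ring{Z}^r$ is Presburger-definable by the preceding lemma.

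Then I would simply write
\[
L_G = \{(\lambda,g)\in P^+\times G \mid \exists k_1, k_2\in K,\ \exists a\in A : [a\in A_\lambda]\ \land\ g = k_1 a k_2\}.
\]
All ingredients (the valued-field operations needed to form $k_1 a k_2$, equality in $VF$, membership in $K$, membership in $A$, membership in $A_\lambda$, and membership in $P^+$) are definable, and existential quantification preserves definability. Hence $L_G$ is a definable subassignment of $P^+\times G$. For fixed $\lambda\in P^+$, the fiber $L_G(\lambda)$ is the preimage under the projection $L_G\to P^+$ of the definable singleton $\{\lambda\}$, so it is a definable subassignment of $G$.

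The disjointness claim $L_G(\lambda)\cap L_G(\lambda')=\emptyset$ for $\lambda\ne\lambda'$ is not itself a definability statement; it is the uniqueness half of the Cartan decomposition $G(F)=\bigsqcup_{\lambda\in P^+} K\varpi^\lambda K$ for unramified reductive $p$-adic groups, which is classical (Bruhat--Tits). I would simply cite it. The main (and only) subtlety is ensuring that the family $\lambda\mapsto A_\lambda$ is uniformly definable in $\lambda$, which is precisely what (\ref{eqn:alambda}) provides; once that is in hand, the construction of $L_G$ is a routine existential formula and the rest is classical input from the $p$-adic theory.
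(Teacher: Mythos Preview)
Your proposal is correct and matches the paper's approach: the paper gives no formal proof beyond the one-line remark citing Bruhat--Tits \cite[4.4.3]{bruhat1972groupes}, and your existential-formula argument for definability simply makes explicit what the paper leaves implicit. One small sharpening: since definable subassignments are evaluated on $K((t))$ for $K\in\op{Field}_\Q$ (not only on $p$-adic fields), the disjointness clause requires the Cartan decomposition over \emph{general} Henselian discretely valued fields, which is precisely why the paper phrases its Bruhat--Tits citation that way rather than restricting to the $p$-adic case.
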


By Bruhat-Tits, the Cartan decomposition holds over general discrete valued
fields \cite[4.4.3]{bruhat1972groupes}.

\begin{remark}   
  $L_G$ captures the entire spherical Hecke algebra as a single
  definable subassignment.  In applications to the fundamental lemma,
  it is important to work with this single subassignment rather than
  an infinite basis of the spherical Hecke algebra.
\end{remark}

We define the {\it spherical Hecke function} to be the characteristic
function of $L_G$.  It is a definable function (as well as a
$q$-constructible function) on $P^+\times G$.

The Satake transform $f\mapsto \hat f$ is an
isomorphism $\H(G//K)\to\ring{C}[Y^*]^{W^\theta}$.  Let
$s_{\lambda,\mu}$ be the coefficients of the change of basis $\hat
f_\lambda = \sum_\mu s_{\lambda,\mu} m_\mu$,
where $m_\mu$ is as before (\ref{eqn:mmu}).

The Satake transform lifts to the $q$-constructible setting.  The
Satake transform involves a term $q^{\langle\rho^\vee,\mu\rangle}$,
where $\rho^\vee = \rho(\Psi^\vee)$.  Constructible functions in the
formula (\ref{eqn:q}) only contain integral powers of $q$.  However,
\cite[\S B.3.1]{cluckers2011local} extends the theory of constructible
functions to allow half-integers.  To accommodate the square roots
introduced by $\rho$, we extend the theory in that way without further
comment.\footnote{We ask whether the difference $\rho_G - \rho_H$,
  for $G$ and an unramified endoscopic group $H$, is always a sum of
  roots.}

\begin{lemma}\label{lemma:satake} 
  There is a $q$-constructible function $s$ on $P^+\times P^+$ that
  specializes to the function $(\lambda,\mu)\mapsto s_{\lambda,\mu}$.
\end{lemma}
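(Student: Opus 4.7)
The plan is to read off $s_{\lambda,\mu}$ from Macdonald's formula (Theorem~\ref{thm:macdonald}) by expanding in $\ring{C}[Y^*]$ and tracking coefficients. All ingredients---the Weyl group $W^\theta$, the simple roots $\Delta_1$, the selection rule $\lambda\mapsto S(\lambda)$, and the factorization of $d_\alpha(q)$ from Lemma~\ref{lemma:prod}---are either fixed choices or Presburger definable on the lattice $Y^*\cong\ring{Z}^r$, so the task reduces to exhibiting each coefficient of a suitable formal expansion as a $q$-constructible function.

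First I would combine Macdonald's formula with the product expansion of $P(E^{-w},q^{-1})$ from Lemma~\ref{lemma:prod} and apply the geometric series of equation~(\ref{eqn:t}) to each factor $d_{w\alpha}(q^{-1})/d_{w\alpha}(1)$. Multiplying the resulting finitely many formal series in $\ring{C}[[Y^*]]$ produces an expansion
\[
\hat f_\lambda = \frac{q^{\langle\lambda,\rho^\vee\rangle}}{Q_{S(\lambda)}(q^{-1})}
\sum_{w\in W^\theta} \sum_\nu c^w_{\lambda,\nu}(q)\, e^{w\lambda + \nu},
\]
in which, for each $\nu$, the coefficient $c^w_{\lambda,\nu}$ is a finite polynomial expression in $q$ and $q^{-1}$ whose combinatorial content is a Kostant-type partition count: the (signed, $q$-weighted) number of ways to express $\nu$ as a nonnegative integer combination of roots drawn from a fixed finite multiset tied to the $A_1$/$A_2$ factorization of Lemma~\ref{lemma:prod}. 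This counting problem is visibly Presburger on $\ring{Z}^r$, so $(\lambda,\nu)\mapsto c^w_{\lambda,\nu}$ is Presburger constructible, and summing over the fixed finite set $w\in W^\theta$ preserves this.

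The scalar prefactor is handled by partitioning $P^+$ into the finitely many Presburger definable pieces $\{\lambda : S(\lambda)=S\}$ indexed by $S\subseteq\Delta_1$, on each of which $q^{\langle\lambda,\rho^\vee\rangle}/Q_S(q^{-1})$ is $q$-constructible (admitting half-integer powers of $q$ as in~\cite[\S B.3.1]{cluckers2011local} to accommodate $\rho^\vee$). Multiplying through yields a $q$-constructible function $c_{\lambda,\xi}$ on $Y^*\times Y^*$ whose value at $\xi$ is the coefficient of $e^\xi$ in $\hat f_\lambda$. Since $\hat f_\lambda$ lies in $\ring{C}[Y^*]^{W^\theta}$, for every $\mu\in P^+$ one has $s_{\lambda,\mu} = c_{\lambda,\mu}$; restricting to the Presburger definable subassignment $P^+\times P^+$ gives the claimed $s$.

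The main obstacle is that the Satake expansion is supported on an infinite subset of $P^+\times P^+$, so one must realize it by a single Denef--Pas formula rather than term by term. This is resolved by the observation that the support condition---that $\mu-w\lambda$ lie in the semigroup generated by the relevant restricted roots for some $w\in W^\theta$---is itself Presburger, and the Kostant-type partition function on that support is given by a Presburger formula in the weight variables. Everything then fits into the constructible $q$-framework of~\cite{cluckers2011local}, with no integration required.
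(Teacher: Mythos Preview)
Your approach has a genuine gap. Each $w$-summand in Macdonald's formula is a rational function on $\hat S$ with poles along the Weyl walls $\{e^{-w\alpha}=1\}$, not a Laurent polynomial. Equation~(\ref{eqn:t}) expands the ratio $P(E,q^{-1})/P(E,1)$, whose factors $(1-t)/(1-q^{-b}t)$ are pole-free on $\hat S_1$; Macdonald's formula contains the \emph{reciprocal} ratio $P(E^{-w},1)/P(E^{-w},q^{-1})$, whose factors $(1-q^{-b}t)/(1-t)$ with $t=e^{-w\alpha}$ are not. More seriously, each $w$-term has a natural formal expansion in the cone spanned by $\{-w\alpha:\alpha\in\Psi^{red,+}\}$, and these cones differ as $w$ varies. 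Adding the resulting series in mismatched cones does \emph{not} reproduce the polynomial $\hat f_\lambda$. Already for $\op{SL}_2$ with $\lambda=0$: expanding $(1-q^{-1}e^{-\alpha})/(1-e^{-\alpha})$ in powers of $e^{-\alpha}$ and $(1-q^{-1}e^{\alpha})/(1-e^{\alpha})$ in powers of $e^{\alpha}$ and summing gives $2+(1-q^{-1})\sum_{k\ne 0}e^{k\alpha}$, whereas the actual rational sum equals the constant $1+q^{-1}$. So your $\sum_w c^w_{\lambda,\mu-w\lambda}$ does not compute $s_{\lambda,\mu}$.

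The paper sidesteps this by using the \emph{integral} definition of the Satake transform rather than Macdonald's formula:
\[
s_{\lambda,\mu}=\frac{q^{\langle\rho,\mu\rangle}}{\op{vol}(A_0)}\int_{A_\mu}\int_N \op{char}(L_G)(\lambda,tn)\,dn\,dt,
\]
observing that the integrand is the characteristic function of the definable Cartan set $L_G$ of Lemma~\ref{lemma:cartan}, and that motivic pushforward (integration over fibers) preserves $q$-constructibility. The only remaining point is that $\op{vol}(A_0)$ specializes to a product of cyclotomic polynomials in $q$, hence is invertible in the ring of constructible functions. Your route can be repaired---for example by rewriting Macdonald's formula via $P(E^{-w},1)=(-1)^{\ell(w)}e^{w\rho-\rho}P(E^{-1},1)$ so that only the single partition function $P(E^{-1},1)$ and the \emph{polynomials} $D(E^{-w},q^{-1})$ remain, as in the proof of Theorem~\ref{thm:gs}---but that is a different argument from the one you sketched.
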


\begin{proof} 
  The coefficients $s_{\lambda,\mu}$ are given by an integral of a
  $q$-constructible function on $P^+\times G$:
\begin{equation}\label{eqn:s}
(\lambda,\mu)\mapsto s_{\lambda,\mu}
=\frac{q^{\langle\rho,\mu\rangle}}{\op{vol}(A_0)} 
\int_{A_\mu} \int_N \op{char}(L_G)(\lambda,t n) dn dt.
\end{equation}
Here $N$ is the unipotent radical of a Borel subgroup $B$ containing a
maximally split Cartan subgroup $T$.  The subgroup $A_0 = A\cap K$ is
a maximal compact subgroup of $A$.  Its volume $\op{vol}(A_0)$
specializes to a polynomial in $q$ that can be written as a product of
cyclotomic polynomials.  Adjusting $\op{vol}(A_0)$ by a null function,
we may assume that $\op{vol}(A_0)$ is the specialization of a product
of cyclotomic polynomials.  Cyclotomic polynomials in $q$ are invertible
constructible motivic functions.  Thus, $\op{vol}(A_0)$ can be
inverted.  Integration here is understood to be motivic integration
with respect to invariant volume forms on $N$ and $A$.  The
pushforward under a definable morphism (integration over fibers)
carries $q$-constructible functions to $q$-constructible functions.
Therefore $(\lambda,\mu)\mapsto s_{\lambda,\mu}$ is a
$q$-constructible function.
\end{proof}

\begin{remark} The proof of the previous lemma motivates the following
  question.  Let $E/RF$ be an extension of the residue field sort
  described by coefficients running over a definable subassignment
  $Z$ as in Section \ref{sec:defred}.  Let $T$ be a torus defined over
  the residue field sort that splits over $E$.  The torus is
  classified by $(X^*(T),X_*(T),\theta)$, where $\theta$ is the
  automorphism of the character and cocharacter lattices determined by
  a quasi-Frobenius automorphism.  Let $\op{vol}(T)$ be the motivic
  volume of $T$, viewed as usual as an element of a Grothendieck ring.
  Upon specialization to a finite field $\ring{F}_q$, the cardinality of a torus $T$ is
  given by a determinant \cite[Prop.3.3.7]{carter1985finite}:
\[
D(\theta,q):=\det(\theta q-1; X_*(T)\otimes\ring{Q}).
\]
Is $\op{vol}(T) = D(\theta,\ring{L})$ in the
Grothendieck group, where $\ring{L}$ is the Lefschetz motive?
\end{remark}

\section{Presburger constructibility}

In this section, we check that some functions related to the finite
dimensional representations of complex reductive groups are Presburger
constructible functions on the appropriate integer lattices.  

\begin{remark}\label{rem:matrix}
  For purposes of constructibility, we consider $\ring{Z}^r$ and also
  $Y^*$ as definable subassignments $h[0,0,r]$. When dealing with
  $\ring{Z}^r$, integrals over fibers in the sense of motivic
  integration are discrete sums.  For example, if $(\lambda,\mu)\to
  a_{\lambda,\mu}$ and $(\mu,\nu)\to b_{\mu,\nu}$ are constructible
  functions of integer parameters $(\lambda,\mu,\nu)\in L\times
  M\times N$, then we may interpret the matrix product
  $(\lambda,\nu)\to \sum_{\mu} a_{\lambda,\mu} b_{\mu,\nu}$ as a fiber
  integral as follows.  We pull $a_{\lambda,\mu}$ and $b_{\mu,\nu}$
  both back to $L\times M\times N$, multiply them as constructible
  functions on $L\times M\times N$, then integrate (sum) over the
  fibers of the projection morphism $L\times M\times N\to L\times N$.
\end{remark}

\subsection{weight multiplicities}

We expand the partition function into an infinite series
\[
P(\hat G,V_R,\theta_1,E,q) = 
\sum_\mu (P(\hat G,V_R,\theta_1,E,q),e^\mu) e^{\mu}.
\]

\begin{lemma}\label{lemma:partition}
  The function $\mu\mapsto (P(\hat G,V_R,\theta_1,E,q),e^\mu)$ is
  Presburger constructible.  The function $\mu\mapsto (P(\hat
  G,V_R,\theta_1,E,1),e^\mu)$ is Presburger constructible.
\end{lemma}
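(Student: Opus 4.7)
The plan is to factor the determinant explicitly via Lemma \ref{lemma:fact}, expand the partition function as a product of geometric series, and then recognize the coefficient of $e^\mu$ as the integration over fibers of a Presburger constructible function on an integer lattice.

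By Lemma \ref{lemma:fact}, we can write
\[
D(\hat G,V_R,\theta_1,E,q) = \prod_{j=1}^{N} (1 - \zeta_j q^{b_j} e^{\nu_j}),
\]
where each $\nu_j$ is of the form $N_1\mu$ for some weight $\mu$ of $V$, each $b_j$ is a positive integer, and each $\zeta_j$ is a root of unity (of order $d_j$ dividing a multiple of the order of $\theta_1$). Taking reciprocals and expanding each factor as a geometric series yields
\[
P(\hat G,V_R,\theta_1,E,q) = \prod_{j=1}^{N}\sum_{n_j\ge 0} \zeta_j^{n_j} q^{b_j n_j} e^{n_j\nu_j},
\]
so that, extracting the coefficient of $e^\mu$,
\[
\bigl(P(\hat G,V_R,\theta_1,E,q),e^\mu\bigr) \;=\; \sum_{n\in S_\mu} \prod_{j=1}^{N} \zeta_j^{n_j} q^{b_j n_j},
\qquad
S_\mu := \Bigl\{ n\in\ring{N}^N \,\Big|\, \sum_j n_j\nu_j = \mu\Bigr\}.
\]
The subassignment $\{(\mu,n)\in Y^*\times\ring{Z}^N : n\in S_\mu\}$ is cut out by finitely many linear equations and linear inequalities in $(\mu,n)$, hence is Presburger-definable.

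The one nontrivial wrinkle is the presence of the roots of unity $\zeta_j$. I would handle this by partitioning $\ring{Z}^N$ according to the residues $n_j\bmod d_j$; this partition is Presburger-definable since congruences modulo a fixed integer are part of the Presburger language. On each piece of the partition, the scalar $\prod_j \zeta_j^{n_j}$ takes a single constant value in $\ring{Q}(\zeta)\subseteq\ring{C}$ for $\zeta$ a primitive root of unity of order $\mathrm{lcm}(d_j)$. The remaining factor $q^{\sum_j b_j n_j}$ is of the form $q^{\ell(n)}$ for a linear Presburger function $\ell$, which is exactly the shape of a Presburger constructible function on $\ring{Z}^N$. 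Hence the summand is Presburger constructible on each piece of $S_\mu$, and summing the finitely many pieces preserves Presburger constructibility. Integrating out $n$ (a discrete sum, in the sense of Remark \ref{rem:matrix}) gives the stated constructibility of $\mu\mapsto(P,e^\mu)$.

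For the specialization $q=1$, I would run exactly the same argument, simply omitting the factors $q^{b_j n_j}$: each geometric series $\sum_{n_j\ge 0}\zeta_j^{n_j} e^{n_j\nu_j}$ still expands formally, and the expansion of the coefficient of $e^\mu$ becomes $\sum_{n\in S_\mu}\prod_j \zeta_j^{n_j}$, with the same Presburger-definable set $S_\mu$ and the same partition into congruence classes. The main point to check is that this sum makes sense (the fiber $S_\mu$ being finite); in the applications of interest, the weights $\nu_j$ lie in an open half-space of $X^*(\hat T_1)\otimes\ring{Q}$, which forces each $S_\mu$ to be finite, and the constructibility argument goes through verbatim. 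The main obstacle, and the place where care is needed, is therefore the bookkeeping of the roots of unity $\zeta_j$ inside the Presburger framework; but splitting by congruence classes reduces this bookkeeping to a finite case analysis that is intrinsic to the Presburger language.
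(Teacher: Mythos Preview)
Your proof is correct and follows essentially the same approach as the paper's: factor via Lemma~\ref{lemma:fact}, expand in geometric series, and absorb the roots of unity by splitting into congruence classes modulo their orders. The only organizational difference is that the paper first observes (via Remark~\ref{rem:matrix}) that a product of series with constructible coefficients again has constructible coefficients, and thereby reduces to a \emph{single} factor $(1-\zeta q^b e^\alpha)^{-1}$, whose expansion $\sum_{a=0}^{k-1}\zeta^a q^{ab}e^{a\alpha}\sum_{i\ge 0} q^{ikb}e^{ik\alpha}$ is then visibly constructible; you instead expand the full product at once and sum over the Presburger set $S_\mu$. These are equivalent, and your explicit remark on the finiteness of $S_\mu$ (needed for the $q=1$ case) is a point the paper leaves implicit.
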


\begin{proof} 
  Recall (Lemma \ref{lemma:fact}) that $P(\hat G,V_R,\theta_1,E,q)$ is
  a product of factors of the form
\[
(1- \zeta q^b e^\alpha)^{-1}
\]
where $\zeta$ is a root of unity ($\zeta^k=1$) and $\alpha = N_1\mu$
is a norm.

If $\sum_\mu a_\mu e^\mu$ and $\sum_\mu b_\mu e^\mu$ have
constructible coefficients, then it is easily checked that the product
also has constructible coefficients (see Remark \ref{rem:matrix}).
Thus, the proof reduces immediately to showing that constructibility
of the coefficients of
\[
\frac{1}{1-\zeta q^b e^\alpha} 
= \sum_{j=0}^\infty \zeta^j q^{j b} e^{j\alpha} = 
\sum_{a = 0}^{k-1} \zeta^a 
q^{a b} e^{a\alpha}\sum_{i=0}^\infty q^{i k b} e^{i k\alpha},
\]
with reindexing $j = i k + a$, with $0\le a < k$.  The coefficients in
the inner sum on the right are evidently constructible.
\end{proof}

We continue to work in the usual context of an unramified reductive
group $G$, dual ${}^LG$, and partition function $P(E,q)=P(\hat
G,\n,\theta,E,q)$.

Let $m_{\lambda,\mu}\in \ring{N}$ be the multiplicity of the weight
$\mu\in X^*(T)$ in the irreducible representation with highest weight
$\lambda\in P^+$.

\begin{lemma}  
  The weight multiplicity function $(\lambda,\mu)\mapsto
  (\tau_\lambda,e^\mu)$, the $q$-weight multiplicity function
  $(\lambda,\mu)\mapsto (\tau_{\lambda,q},e^\mu)$ and the inverse
  Satake transform $(\lambda,\mu)\mapsto t_{\lambda,\mu}$ are all
  Presburger constructible functions.
\end{lemma}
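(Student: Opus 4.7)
The plan is to reduce each of the three claims to Lemma~\ref{lemma:partition}, which provides Presburger constructibility of the coefficients $c_\nu(q) := (P(E^{-1},q), e^\nu)$ and of $\nu \mapsto c_\nu(1)$.

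First I would handle the weight multiplicity function by invoking the twisted Weyl character formula $\tau_\lambda = J(e^\lambda) P(E^{-1},1)$ and reading off the coefficient of $e^\mu$:
\[
(\tau_\lambda, e^\mu) = \sum_{w \in W^\theta} (-1)^{\ell(w)}\, c_{\mu - w\bullet \lambda}(1).
\]
Since $W^\theta$ is a fixed finite choice and, for each $w$, the map $(\lambda,\mu)\mapsto \mu - w\bullet\lambda$ is definable and affine, each summand is Presburger constructible in $(\lambda,\mu)$ by pullback, and closure under finite signed sums gives the first claim. The $q$-weight multiplicity is then immediate from the same argument applied to $\tau_{\lambda,q} = J(e^\lambda)P(E^{-1},q)$ of Equation~\ref{eqn:tauq}, giving
\[
(\tau_{\lambda,q}, e^\mu) = \sum_{w\in W^\theta} (-1)^{\ell(w)}\, c_{\mu - w\bullet \lambda}(q),
\]
which is Presburger constructible by the $q$-part of Lemma~\ref{lemma:partition}.

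For the inverse Satake transform, I would use the Kato-Lusztig formula
\[
t_{\lambda,\mu} = (\tau_{\lambda,q^{-1}}, e^\mu)\, q^{-\ang{\mu,\rho^\vee}}.
\]
The first factor is constructible by the $q$-weight multiplicity case (after the harmless substitution $q \mapsto q^{-1}$, which simply inverts the monomials appearing in the partition-function coefficients), and the second factor is a power of $q$ whose exponent is a linear definable function of $\mu$, hence Presburger constructible after the half-integer extension recalled in Section~\ref{sec:definability}. The product of two Presburger constructible functions is again constructible, finishing this case.

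The main obstacle is essentially bookkeeping: confirming that the fixed-choice status of $W^\theta$ and of the dot-action $\bullet$ allows finite signed sums indexed over $W^\theta$ to be treated as operations internal to the Presburger class, and that the half-integral exponents arising from $\rho^\vee$ do not escape the framework. Both points are covered by the conventions in Sections~\ref{sec:L} and \ref{sec:definability}, so no new ingredient beyond Lemma~\ref{lemma:partition} and the character formulas already established is required.
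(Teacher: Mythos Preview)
Your proposal is correct and follows essentially the same route as the paper: expand each of $\tau_\lambda$, $\tau_{\lambda,q}$, and $t_{\lambda,\mu}$ via the twisted Weyl character and Kato--Lusztig formulas as a finite $W^\theta$-sum whose terms are pullbacks of partition-function coefficients, then invoke Lemma~\ref{lemma:partition}. The paper's proof is merely a terser statement of the same reduction.
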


\begin{proof} 
  Each function is a finite sum over $w\in W^\theta$ of partition
  functions.  Because constructible functions form a ring, it is
  enough to check that each term in the sum is constructible.  The
  relevant partition functions are $P(E^{-w},1)$, $P(E^{-w},q)$,
  and $P(E^{-w},q^{-1})$, respectively.  These are constructible by
  Lemma \ref{lemma:partition}.
\end{proof}

Recall that $n_{\lambda,\mu}$ is the inverse of the weight
multiplicity matrix.

\begin{theorem}\label{lemma:van-leeuwen} 
  $n_{\mu,\lambda}$ is a Presburger constructible function on
  $\dom\times\dom$.
\end{theorem}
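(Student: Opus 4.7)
The plan is to apply van Leeuwen's formula (Lemma \ref{lemma:van}) directly and check that each ingredient is Presburger definable on a suitable partition of $P^+\times P^+$. Since $W^\theta$, its generating set of simple reflections, and all subgroups $W_S\le W^\theta$ indexed by subsets $S\subseteq\Delta_1$ are fixed choices in the sense of Section~\ref{sec:definability}, they contribute only finitely many cases to any formula we write down.

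First I would stratify $P^+$ by the value of $S(\mu)$. For each subset $S\subseteq\Delta_1$, the piece
\[
P^+_S=\{\mu\in P^+\mid S(\mu)=S\}
=\{\mu\in P^+\mid \langle\mu,\alpha^\vee\rangle=0\text{ iff }\alpha\in S\}
\]
is cut out by finitely many linear equalities and strict inequalities on the integer coordinates of $\mu\in Y^*\cong\ring{Z}^r$, hence is Presburger definable, and there are finitely many such strata. Since $n_{\mu,\lambda}$ is a linear combination of its restrictions to the pieces $P^+_S\times P^+$, it suffices to show Presburger constructibility on each such piece, where $W_S$ and a set of coset representatives for $W^\theta/W_S$ are fixed finite data.

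Next, on $P^+_S\times P^+$, van Leeuwen's formula reduces $n_{\mu,\lambda}$ to a fixed finite sum
\[
n_{\mu,\lambda}=\sum_{(w',w)}(-1)^{\ell(w)}\,e_w(w'\mu)\,\delta_{w\bullet(w'\mu),\lambda},
\]
with $w'$ running over the fixed set of representatives of $W^\theta/W_S$ and $w$ over the fixed set $W^\theta$. Each action $\mu\mapsto w'\mu$ and each dot action $\nu\mapsto w\bullet\nu=w(\nu+\rho)-\rho$ is given by an affine linear map with integer coefficients (after clearing the factor $2$ in $\rho$ by rescaling, or by allowing the half-integer extension of \cite[\S B.3.1]{cluckers2011local} already invoked in Lemma \ref{lemma:satake}). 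Consequently $e_w(w'\mu)$, which asserts $w\bullet(w'\mu)\in P^+$, is defined by finitely many linear inequalities in $\mu$, and the Kronecker symbol $\delta_{w\bullet(w'\mu),\lambda}$ is defined by finitely many linear equalities in $(\mu,\lambda)$. Both are Presburger definable indicator functions, and signs $(-1)^{\ell(w)}$ are fixed constants.

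Since the class of Presburger constructible functions is closed under finite sums, products, and multiplication by definable characteristic functions, assembling the pieces gives that $n_{\mu,\lambda}$ is Presburger constructible on $P^+\times P^+$. The only step requiring care is the stratification by $S(\mu)$, which is needed because the size of $W^\theta/W_S$ is not constant in $\mu$; once that finite case analysis is made, no further obstacle arises.
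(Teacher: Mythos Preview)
Your proof is correct and follows the same approach as the paper's own argument: both invoke van Leeuwen's formula and reduce to the Presburger definability of the sets $Y^*_w$ and of the Kronecker delta $\delta_{w\bullet(w'\mu),\lambda}$. Your version is simply more explicit about the finite stratification by $S=S(\mu)$ (needed because the index set $W^\theta/W_S$ varies with $\mu$), which the paper's terse proof leaves implicit; one small remark is that the half-integer extension is not actually required here, since $w\bullet\mu-\mu=w\rho-\rho$ lies in the root lattice and so the dot action is integral on $Y^*$.
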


\begin{proof} 
  This is a consequence of van Leeuwen's formula (Lemma
  \ref{lemma:van}).  Referring to that formula, it is enough to show
  constructibility of each term in the sum, with fixed $(w',w)$.  This
  follows from the definability of the set $Y^*_w$ and of the delta
  function $(\mu,\lambda)\mapsto \delta_{w\bullet (w'\mu),\lambda}$.
\end{proof}

\begin{corollary} 
  Consider the geometric Satake transform (\ref{thm:gs}):
\[
\hat f_\lambda = \sum_\mu g_{\lambda,\mu} \tau_\mu.
\]
Then $(\lambda,\mu)\mapsto g_{\lambda,\mu}$ is Presburger
constructible.
\end{corollary}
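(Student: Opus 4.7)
The plan is to deduce the corollary from two earlier constructibility results already in hand: the fact that the Satake coefficients $s_{\lambda,\mu'}$ defined by $\hat f_\lambda = \sum_{\mu'} s_{\lambda,\mu'} m_{\mu'}$ are $q$-constructible (Lemma \ref{lemma:satake}), and the fact that the van Leeuwen inversion matrix $n_{\mu',\mu}$ expressing $m_{\mu'} = \sum_\mu n_{\mu',\mu}\tau_\mu$ is Presburger constructible (Theorem \ref{lemma:van-leeuwen}). Combining these two factorizations gives the identity
\[
g_{\lambda,\mu} = \sum_{\mu'} s_{\lambda,\mu'}\, n_{\mu',\mu},
\]
already recorded immediately after Theorem \ref{thm:gs}. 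The task is then to recognize the right-hand side as a constructible function of $(\lambda,\mu)\in P^+\times P^+$.

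To do this, I would pull $s$ and $n$ back to $P^+\times P^+\times P^+$ along the projections onto the coordinates $(\lambda,\mu')$ and $(\mu',\mu)$ respectively, and take their product inside the ring of constructible functions on this triple product. I would then integrate over the $\mu'$-fiber using the discrete motivic measure on $\ring{Z}^r$, exactly as described in Remark \ref{rem:matrix}. Since the characteristic function $\op{char}(L_G)$ has support bounded by the Cartan decomposition, for each fixed $\lambda$ the coefficient $\mu'\mapsto s_{\lambda,\mu'}$ is supported on a definable bounded subset of $P^+$, so the product $s_{\lambda,\mu'}n_{\mu',\mu}$ is integrable over $\mu'$. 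Stability of the class of constructible functions under pushforward along a definable morphism (the projection $P^+\times P^+\times P^+\to P^+\times P^+$) then yields the desired constructibility of $g_{\lambda,\mu}$.

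Alternatively, one can bypass the matrix product and argue directly from the closed formula in Theorem \ref{thm:gs}. There the coefficient $g_{\lambda,\mu_0}$ is obtained as a finite sum, indexed by $w\in W^\theta$ and $\mu\in C$ (a fixed finite set), of terms
\[
\frac{q^{\ang{\lambda,\rho^\vee}}}{Q_{S(\lambda)}(q^{-1})}\,(-1)^{\ell w}\,p_\mu(q^{-1})\, e_w(\lambda+\mu)\,\delta_{w\bullet(\lambda+\mu),\,\mu_0}.
\]
Each ingredient here is visibly Presburger constructible in $(\lambda,\mu_0)$: the exponent $\ang{\lambda,\rho^\vee}$ is a linear Presburger function; the dependence of $S(\lambda)$ on $\lambda$ partitions $P^+$ into definable pieces on each of which $Q_{S(\lambda)}(q^{-1})$ is a fixed cyclotomic expression in $q$ (hence invertible as a constructible function, as in the proof of Lemma \ref{lemma:satake}); the constants $p_\mu(q^{-1})$ are in the base ring; $e_w$ is the characteristic function of a Presburger-definable set; and $\delta_{w\bullet(\lambda+\mu),\mu_0}$ is the characteristic function of the affine linear condition $w(\lambda+\mu+\rho) = \mu_0+\rho$, which is Presburger.

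The main point requiring genuine care is the handling of the denominator $Q_{S(\lambda)}(q^{-1})$, whose combinatorial type jumps as $\lambda$ crosses the walls of $P^+$; this is exactly the stratification by the subsets $S\subseteq \Delta_1$ determined by $\{\alpha : \ang{\lambda,\alpha^\vee}=0\}$, and on each stratum the denominator becomes a fixed product of cyclotomic factors invertible in the ring of constructible motivic functions (cf.\ the proof of Lemma \ref{lemma:satake}). Once this stratification is in place, assembling the finite sum over $(w,\mu)$ and recognizing the delta condition as Presburger completes the argument, and the two approaches agree on the output.
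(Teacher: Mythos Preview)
Your primary argument is exactly the paper's proof: write $g_{\lambda,\mu}=\sum_{\mu'} s_{\lambda,\mu'} n_{\mu',\mu}$, invoke the constructibility of $s$ (Lemma~\ref{lemma:satake}) and of $n$ (Theorem~\ref{lemma:van-leeuwen}), and appeal to Remark~\ref{rem:matrix} to see that matrix multiplication over a definable index set preserves constructibility. Your alternative via the explicit formula of Theorem~\ref{thm:gs} is also anticipated by the paper, which remarks immediately after the proof that a second proof can be obtained this way.
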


\begin{proof}  
  The functions $(\lambda,\mu)\mapsto s_{\lambda,\mu}$ and
  $(\lambda,\mu)\mapsto n_{\lambda,\mu}$ are constructible.  The basis
  $g_{\lambda,\mu}$ is the matrix product of these two bases.  The
  result follows from Remark \ref{rem:matrix}: matrix multiplication
  with definable indexing sets preserves constructibility.
\end{proof}

A second proof of the theorem can be obtained from Theorem
\ref{thm:gs}.

\subsection{branching formulas}

While we are on the topic of constructibility, we point out the
constructibility of branching multiplicities.  For example, we have
the following corollary of the classical branching multiplicity formula.

\begin{lemma} 
  Let $H\le G$ be complex reductive groups with Lie algebras
  ${\mathfrak h}\subseteq {\mathfrak g}$.  Fix maximal tori $T_H\le
  T_G$ with Lie algebras $t_h\subseteq t_g$.  Assume that there is an
  element $X_0\in t_h$ such that $\langle\alpha,X_0\rangle>0$ for
  every positive root of ${\mathfrak g}$.  Let $\dom_G$ and $\dom_H$
  be the sets of dominant weights in $G$ and $H$.  Let
  $m(\lambda,\mu)$ be the multiplicity of the irreducible $\mathfrak
  h$-module with highest weight $\mu$ in the irreducible $\mathfrak
  g$-module with highest weight $\lambda$.  Then $m(\lambda,\mu)$ is a
  Presburger constructible function on $\dom_G\times\dom_H$.
\end{lemma}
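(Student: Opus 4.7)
The plan is to realize $m(\lambda,\mu)$ as a composition of three Presburger constructible operations, ultimately a matrix product of two already-constructible matrices; no new motivic integration arguments are required beyond what is already in the excerpt.

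First, by the earlier weight multiplicity lemma (applied to $G$, with trivial Frobenius action since $G$ is here a split complex group), the function $(\lambda,\nu)\mapsto m^G_{\lambda,\nu}$ giving the multiplicity of $\nu\in X^*(T_G)$ in $V_\lambda$ is Presburger constructible on $\dom_G\times X^*(T_G)$ and is supported on the (definable) weight polytope of $V_\lambda$. Next, let $r\colon X^*(T_G)\to X^*(T_H)$ denote the $\ring{Z}$-linear restriction map dual to $T_H\hookrightarrow T_G$; it is Presburger definable. The character $\chi_\lambda$ of $V_\lambda$, restricted to $T_H$, is $W_H$-invariant, so it admits an expansion
$\chi_\lambda|_{T_H} = \sum_{\nu\in\dom_H}\tilde c_{\lambda,\nu}\,m^H_\nu,$
where
$\tilde c_{\lambda,\nu} \;=\; \sum_{\nu'\in r^{-1}(\nu)} m^G_{\lambda,\nu'}.$
This $\tilde c_{\lambda,\nu}$ is the pushforward of the Presburger constructible function $m^G_{\lambda,\nu'}$ along the definable morphism $(\lambda,\nu')\mapsto(\lambda,r(\nu'))$, restricted to $\dom_G\times\dom_H$.

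Third, by Theorem \ref{lemma:van-leeuwen} applied to $H$, the inverse weight multiplicity matrix $n^H_{\nu,\mu}$, defined by $m^H_\nu=\sum_\mu n^H_{\nu,\mu}\,\tau^H_\mu$, is Presburger constructible on $\dom_H\times\dom_H$. Substituting into the expansion above gives
$m(\lambda,\mu) \;=\; \sum_{\nu\in\dom_H}\tilde c_{\lambda,\nu}\,n^H_{\nu,\mu},$
a matrix product in the sense of Remark \ref{rem:matrix} of two Presburger constructible functions, hence itself Presburger constructible on $\dom_G\times\dom_H$.

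The main obstacle is the second step: $\tilde c_{\lambda,\nu}$ is formally a sum over the infinite coset $r^{-1}(\nu)\subset X^*(T_G)$, and one must justify that the pushforward is well-defined and Presburger constructible rather than merely formal. This is precisely where the hypothesis on $X_0$ enters. Every weight $\nu'$ of $V_\lambda$ satisfies $\lambda-\nu' = \sum_{\alpha\in\Phi^+_G}n_\alpha\alpha$ with $n_\alpha\in\ring{Z}_{\ge 0}$, and if $r(\nu')=\nu$ then $\ang{\nu',X_0}=\ang{\nu,X_0}$, whence $\sum n_\alpha\ang{\alpha,X_0}=\ang{\lambda-\nu,X_0}$. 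Since each $\ang{\alpha,X_0}>0$ is bounded below by a fixed positive constant, this yields a Presburger-definable polytopal bound on the $n_\alpha$, and therefore on $\nu'$ itself. Restricting $(\lambda,\nu')\mapsto m^G_{\lambda,\nu'}$ to this bounded definable region does not change the function (by vanishing outside the weight polytope), so the Cluckers-Loeser pushforward along a definable morphism with bounded definable support produces a Presburger constructible output, closing the argument.
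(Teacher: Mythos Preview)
Your argument is correct, but it takes a genuinely different route from the paper. The paper's proof is essentially a citation: Kostant's branching formula (Goodman--Wallach, Theorem~8.2.1) already writes $m(\lambda,\mu)$ as a finite signed sum over $W_G$ of values of a single partition function (the one attached to the positive roots of $\mathfrak g$ that do not come from $\mathfrak h$), and the regularity hypothesis on $X_0$ is exactly what that formula requires. Lemma~\ref{lemma:partition} then gives constructibility of each term. Your approach instead threads the computation through the full weight-multiplicity function of $G$, a pushforward along the restriction $r$, and van Leeuwen inversion on $H$---never naming Kostant's branching formula. This is longer but has the merit of staying entirely within the machinery the paper has already built; in effect you are reassembling Kostant's formula from its ingredients rather than invoking it. One point worth tightening: in your final paragraph you treat the pairings $\ang{\alpha,X_0}$ as giving a Presburger bound, but $X_0$ is only assumed real. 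Since the positivity condition on $X_0$ defines an open rational cone in $t_h$, you may replace $X_0$ by an integral element without loss; after that the inequality $\sum n_\alpha \ang{\alpha,X_0}=\ang{r(\lambda)-\nu,X_0}$ is genuinely Presburger and the bounded-support argument goes through cleanly. The same bound (with $\nu$ in place of $\nu'$) also handles the finiteness needed for the final matrix product over $\nu\in\dom_H$.
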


\begin{proof}  
  Kostant's formula expresses each branching multiplicity as a finite
  sum of partition functions \cite[Theorem ~8.2.1]{goodman}.  Each
  partition function is rational.  Thus, the argument used in the
  proof of Lemma~\ref{lemma:partition} applies.
\end{proof}

\begin{remark}
  Explicit formulas for branching multiplicities are typical of what
  Presburger constructible functions look like.  Typically branching
  formulas look like products of linear factors depending on cases
  that can be described by linear inequalities on dominant weights
  $\lambda$ and $\mu$.  We do not pursue the topic, but we can
  similarly investigate the constructibility of the function giving
  the multiplicities of $\tau_\mu$ in $\op{Sym}^k \tau_\lambda$, and
  related operations on characters.
\end{remark}

Let $(\lambda,\mu)\mapsto m(\lambda,\mu)$ be the function constructed
in Section \ref{sec:branch} that is attached to an embedding
$\xi:{}^LH\to {}^LG$ of endoscopic groups.

\begin{lemma}\label{lemma:branch} 
  The branching multiplicity function $m(\lambda,\mu)$ is Presburger
  constructible on $P^+\times P_H^+$.
\end{lemma}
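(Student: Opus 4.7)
The plan is to invoke the explicit formula (\ref{eqn:branch}),
\[
m(\lambda,\mu) = \sum_{w\in W^\theta}
(-1)^{\ell w}\,(w\bullet\lambda)(\epsilon)\,
\frac{p_{\mu-\phi^*(w\bullet\lambda)}}{d_0(\epsilon,\theta)},
\]
and verify that each of the finitely many summands is Presburger constructible in $(\lambda,\mu)$. Since Presburger constructible functions form a ring and are closed under finite sums over a fixed-choice index set such as $W^\theta$, this will suffice.

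First I would dispatch the easy ingredients. The cones $P^+\subseteq X^*(\hat S)$ and $P_H^+\subseteq X^*(\hat T_1)$ are cut out by linear inequalities on cocharacter lattices and hence are Presburger definable subsets of $h[0,0,r]$. The dot-action $\lambda\mapsto w\bullet\lambda=w(\lambda+\rho)-\rho$ and the pullback $\phi^*$ are affine maps on cocharacter lattices, hence Presburger-definable morphisms. The sign $(-1)^{\ell w}$ and the scalar $d_0(\epsilon,\theta)\in\ring{Q}(\zeta)^\times$ are fixed nonzero constants and only rescale constructible functions.

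The main content is the constructibility of $(\lambda,\mu)\mapsto p_{\mu-\phi^*(w\bullet\lambda)}$. By definition, $\nu\mapsto p_\nu$ is the coefficient of $e^\nu$ in the expansion of $\iota^*P(\hat G,\Psi_{w\theta}^+(\hat G\setminus\hat H),\dotw\theta,E,1)$, an endoscopic partition function on $\hat U$. Lemma \ref{lemma:partition} applied at $q=1$ gives that $\nu\mapsto p_\nu$ is Presburger constructible on $X^*(\hat U)$. Composing with the definable map $(\lambda,\mu)\mapsto \mu-\phi^*(w\bullet\lambda)$ (that is, pulling back along this Presburger morphism) yields a Presburger constructible function on $P^+\times P_H^+$, as in Remark \ref{rem:matrix}.

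Finally, I would handle the root-of-unity factor $(w\bullet\lambda)(\epsilon)$. Because $\epsilon\in\hat S$ has finite order $M$, the character-value map $\nu\mapsto\nu(\epsilon)$ factors through the finite quotient $X^*(\hat S)/M X^*(\hat S)$, so $(w\bullet\lambda)(\epsilon)$ depends only on the residue class of $\lambda$ under the Presburger congruences $\equiv_M$ on each coordinate. This partitions $P^+$ into finitely many Presburger-definable classes on which $(w\bullet\lambda)(\epsilon)$ equals a fixed root of unity living in the coefficient ring already permitted by Lemma \ref{lemma:partition}. Assembling the three constructible factors in each summand and summing over $w\in W^\theta$ gives the result. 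The only real bookkeeping obstacle is the last step, the passage from ``finite order of $\epsilon$'' to a concrete Presburger description via $\equiv_M$; once this is in hand, the whole argument reduces to the closure properties of Presburger constructible functions under products, pullbacks along definable morphisms, and finite sums.
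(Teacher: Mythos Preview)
Your proposal is correct and follows essentially the same route as the paper: both reduce to Equation~\ref{eqn:branch}, treat the sum over $W^\theta$ term by term, and invoke Lemma~\ref{lemma:partition} for the constructibility of the coefficients $p_\nu$ of the endoscopic partition function. You are in fact more explicit than the paper on two points---the affine definability of $\lambda\mapsto\phi^*(w\bullet\lambda)$ and the handling of the root-of-unity factor $(w\bullet\lambda)(\epsilon)$ via congruences modulo the order of~$\epsilon$---while the paper adds one remark you omit: it first invokes Lemma~\ref{lemma:transform} to reduce to the case where the lift $\dotw$ used in the data $\D$ agrees with the one coming from $\xi$, noting that the correction factor $\mu(t)$ is itself Presburger constructible since $t$ has finite order. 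This is a harmless omission on your side, since formula~\ref{eqn:branch} already presupposes that normalization, but it is worth recording if one wants the statement to apply to an arbitrary admissible $\xi$.
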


\begin{proof} 
  By Lemma \ref{lemma:transform}, it is enough to assume that the
  elements $\dotw$ agree (for $\D$ and $\xi$).  (Note that a change of
  $\xi$ changes the weights $m(\lambda,\mu)$ by $\mu(t)$, which is a
  constructible function of $\mu$, whenever $t$ has finite order.  In
  fact, each preimage $\{\mu\mid \mu(t) =\zeta\}$ is Presburger
  definable.)  It is enough to show that each term in Equation
  \ref{eqn:branch} is Presburger constructible.  This reduces to the
  constructibility of the terms $p_{\mu'}$, which follows from the
  rationality of the partition function (Lemma \ref{lemma:partition}).
\end{proof}

\subsection{the constructibility of $B_\xi$}\label{sec:B}

Let ${}^LG$ be the $L$-group of an unramified reductive group
$G$.  Let ${}^LH$ be the dual of an unramified endoscopic group $H$.
We assume that both $H$ and and $G$ are given in the category of
definable subassignments over a cocycle space $Z$.  We can assume that
the cocycle space $Z$ is the same for $H$ and $G$.

Working $p$-adically, Langlands gives a homomorphism
$b = b_\xi$ from the spherical Hecke algebra of $G$ to the spherical
Hecke algebra of $H$.  If $f$ belongs to the spherical Hecke algebra
of $G$, its Satake transform belongs to $\ring{C}[Y^*]^{W^\theta}$.
The biinvariant function $b_\xi(f)\in\H(H//K_H)$ is the inverse
Satake transform of the image of $f$ in
\[
\ring{C}[X^*(\hat T_1)]^{W_H^{\theta_H}}.
\]

\begin{theorem}\label{thm:B}
  Let $G$ be an unramified connected reductive group with unramified
  endoscopic group $H$, both considered as definable subassignments
  over a cocycle space $Z$.  Fix an $L$-embedding $\xi:{}^LH\to {}^LG$
  that factors through a finite cyclic group $\ang{\theta}$; that is,
  $\xi:\hat H\rtimes \ang{\theta_H} \to \hat G \rtimes \ang{\theta}$.
  Then there is a $q$-constructible function $B_\xi$ on $P^+_G\times H$
  and a natural number $N$ with the following specializations:
\[
B_\xi(\lambda,h)_F = b_\xi(f_{F,\lambda})(h),\quad \text{for } h\in H(F),
\]
for all $p$-adic fields in $F\in C_N$.  
\end{theorem}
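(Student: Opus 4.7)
The plan is to realize $B_\xi$ as a matrix product of three already-constructed Presburger constructible matrices, then pair the result against the characteristic function of the Cartan decomposition on $H$ to obtain a $q$-constructible function on $P^+_G\times H$.

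First I would factor $b_\xi$ through the following composition of changes of basis. Using geometric Satake on the $G$ side (Theorem \ref{thm:gs}), write $\hat f_\lambda = \sum_\nu g_{\lambda,\nu}\tau_\nu$. Using the endoscopic branching rule (Theorem \ref{thm:branch}), expand $J_\xi \tau_\nu = \sum_\mu m(\nu,\mu)\sigma_\mu$. Using the Kato-Lusztig formula on the $H$ side, invert the $H$-Satake transform to write $\sigma_\mu = \sum_{\mu'} t^H_{\mu,\mu'}\hat f_{H,\mu'}$. Composing gives
\[
b_\xi(f_\lambda) = \sum_{\mu'} c_{\lambda,\mu'}\, f_{H,\mu'},\qquad
c_{\lambda,\mu'} = \sum_{\nu,\mu} g_{\lambda,\nu}\, m(\nu,\mu)\, t^H_{\mu,\mu'}.
\]

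Next I would argue that $(\lambda,\mu')\mapsto c_{\lambda,\mu'}$ is $q$-constructible on $P^+_G\times P^+_H$. The factor $g_{\lambda,\nu}$ is constructible by the corollary to Theorem \ref{lemma:van-leeuwen}; the factor $m(\nu,\mu)$ is constructible by Lemma \ref{lemma:branch}; and the factor $t^H_{\mu,\mu'}$ is constructible by the Kato-Lusztig theorem applied to $H$, using constructibility of the $q$-weight multiplicities. For fixed $\lambda$, the support of $\nu\mapsto g_{\lambda,\nu}$ lies in the weight polytope of $\tau_\lambda$, and analogous support bounds hold for $m$ and $t^H$; these bounds are themselves Presburger definable, so the sums defining the matrix products are finite motivic integrals. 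Remark \ref{rem:matrix} then yields constructibility of the product. Finally, by Lemma \ref{lemma:cartan} the Cartan decomposition $L_H\subseteq P^+_H\times H$ is definable, so its characteristic function is $q$-constructible. Define $B_\xi$ as the pushforward of $c_{\lambda,\mu'}\cdot\op{char}(L_H)(\mu',h)$ along the projection $P^+_G\times P^+_H\times H\to P^+_G\times H$. For each $h\in H(F)$, exactly one value of $\mu'$ contributes, so the pushforward is a finite sum. Specialization of motivic integrals, together with the classical $p$-adic interpretation of each of the three ingredients (Satake on $G$, pullback under $\xi$, inverse Satake on $H$), gives $B_\xi(\lambda,h)_F = b_\xi(f_{F,\lambda})(h)$ for all $p$-adic fields $F$ of sufficiently large residue characteristic.

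The main obstacle is bookkeeping: one must check that the support bounds on all three change-of-basis matrices are uniformly Presburger definable in $(\lambda,\mu')$, so that the matrix products are genuine pushforwards of integrable constructible functions rather than merely formal series, and one must verify that the three motivic change-of-basis pieces specialize, in large residue characteristic, to the three $p$-adic steps whose composition is $b_\xi$. Once these verifications are in place, the theorem reduces to the composition of the constructibility results already proved in the earlier sections, together with the standard specialization theorems of Cluckers-Loeser motivic integration.
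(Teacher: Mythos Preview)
Your proposal is correct and follows essentially the same approach as the paper: factor $b_\xi$ as the composition of geometric Satake coefficients $g_{\lambda,\nu}$, branching multiplicities $m(\nu,\mu)$, and inverse Satake (Kato--Lusztig) coefficients $t^H_{\mu,\mu'}$, then pair with $\op{char}(L_H)$ and push forward. The paper's proof is terser about the support bounds you flag as the main obstacle, simply noting that the sums run over bounded definable sets, but the argument is the same.
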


Recall that for each $F$, we let $f_{F,\lambda}$ denote the
characteristic function of the double coset $K\varpi_F^\lambda K$ in
the unramified reductive group $G$ over $F$.  The theorem implies that
the homomorphism $b_\xi$ has uniform behavior as the $p$-adic field
varies, and as $\lambda$ varies.

\begin{proof}
  We have done most of the work already for this theorem.  Let $L_G$
  and $L_H$ be the definable sets given in Lemma \ref{lemma:cartan}
  for $G$ and $H$.  
   We have
  \begin{align*}
    J_\xi \hat f_\lambda  &= 
\sum  g_{\lambda,\lambda'} J_\xi\tau_{\lambda'}
\\ &=
\sum g_{\lambda,\lambda'} m(\lambda',\mu) \sigma_\mu 
\\ &=
\sum g_{\lambda,\lambda'} m(\lambda',\mu) t^H_{\mu,\mu'} \hat f_{\mu'}^H.           
    \end{align*}
As usual $g_{\lambda,\lambda'}$ are the coefficients of the geometric
Satake
transform, $m(\lambda',\mu)$ are branching coefficients, and
$t^H_{\mu,\mu'}$ are coefficients of the inverse Satake transform on
${}^LH$.  As proved above, these are all constructible functions of
their
lattice parameters.
Then
\[
B_\xi(\lambda,h) = b_\xi(f_\lambda)(h) =
\sum_{\lambda',\mu,\mu'} g_{\lambda,\lambda'} m(\lambda',\mu)
t^H_{\mu,\mu'}
\op{char}(L_H)(\mu',h).
\]
The sums run over bounded definable sets and are represented by
discrete motivic sums over the integer sort (Remark \ref{rem:matrix}).  
The right-hand side of
this
equation is therefore a constructible function of $(\lambda,h)\in
P^+\times H$.
\end{proof}

\begin{remark}  
  We have stated $q$-constructibility results in terms of the limiting
  behavior on $p$-adic fields $\C_N$ for $N$ large.  However, in fact,
  the formulas we obtain for $B_\xi$ hold for all $p$-adic fields.
\end{remark}

\subsection{transfer principle}\label{sec:transfer}

We review the transfer principle from
\cite{cluckers2010constructible}.

\begin{theorem}\label{thm:transfer-principle}
  Let $f\in C(S)$ be a constructible function on a definable set $S$.
  There exists $N$ such that for all pairs of fields $F_1,F_2\in \C_N$
  with the same residue field, $f_{F_1}$ is identically zero on
  $S(F_1)$ iff $f_{F_2}$ is identically zero on $S(F_2)$.
\end{theorem}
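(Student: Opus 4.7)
The plan is to obtain the theorem as a direct application of the Cluckers--Loeser transfer principle for constructible motivic functions. The proof proceeds in two movements: first, encode the assertion ``$f_F$ vanishes identically on $S(F)$'' as a closed sentence in the Denef--Pas language; second, apply the Ax--Kochen--Ershov principle in its uniform form for sufficiently large residue characteristic. I would begin by invoking the structural description of $C(S)$ from \cite{cluckers2008constructible}: every constructible motivic function is assembled by finitely many operations (pullback, pushforward, multiplication, sum, and inversion of factors $1 - \ring{L}^{a}$) from definable subassignments and definable integer-valued functions. Consequently, the specialization $f_F$ on $S(F)$ has the explicit shape displayed in Equation~(\ref{eqn:q}).

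Next, I would translate ``$f_F \equiv 0$ on $S(F)$'' into the truth of a fixed Denef--Pas sentence $\psi_f$. The three sorts appearing in (\ref{eqn:q}) must be separated: the residue-field counts $\card(Y_i(F,x))$ are defined by formulas in the residue-field sort depending on parameters coming from $x$; the exponents of $q_F$ come from definable value-group functions $\alpha_i, \beta_{i,j}$; and the geometric-series factors $(1-q_F^{a_{i,k}})^{-1}$ can be cleared by multiplication. After clearing denominators and re-grouping terms, the pointwise equation $f_F(x) = 0$ becomes a Boolean combination of equalities between definable point-counts, hence is itself a definable condition on $x$. Universally quantifying $x \in S$ yields the closed sentence $\psi_f$. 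The key conceptual point is Cluckers--Loeser's observation that, within the Grothendieck ring of definable subassignments, the ``pure'' residue-field, value-group, and geometric-series components of a constructible function are orthogonal enough that their vanishing can be detected separately and in a residue-field-only way.

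Third, I would invoke the Ax--Kochen--Ershov principle in the form recorded in \cite{cluckers2010constructible}: for every Denef--Pas sentence $\psi$ there exists an integer $N = N(\psi)$ such that for all pairs $F_1, F_2 \in \C_N$ whose residue fields are isomorphic (with matching enumerations, where relevant), $\psi$ holds in the three-sorted structure attached to $F_1$ iff it holds in the three-sorted structure attached to $F_2$. Applied to $\psi = \psi_f$, this produces the $N$ required by the theorem.

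The principal obstacle is the second step. A rational function in the single variable $q$ may vanish at a particular integer $q_{F_1}$ without vanishing identically, so a naive reading of (\ref{eqn:q}) does not immediately give a residue-field-only criterion for $f_F \equiv 0$. The Cluckers--Loeser resolution is to work not with the na\"\i ve specialization but with the motivic class itself, showing that the ``zero ideal'' in $C(S)$ is stable under all the operations used to build $f$ and is detected by sentences whose parameters live in the residue-field and value-group sorts alone. This is where the hypothesis of matching residue fields (rather than merely matching residue characteristics) is essential, since it ensures that all the residue-field-definable counts appearing in (\ref{eqn:q}) agree on the nose between $F_1$ and $F_2$.
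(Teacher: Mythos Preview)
The paper does not prove this theorem at all: it is quoted verbatim as the main result of \cite{cluckers2010constructible}, prefaced by ``We review the transfer principle from \cite{cluckers2010constructible},'' and no argument is given. So there is no ``paper's own proof'' to compare against; the intended response in this paper is simply a citation.

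That said, your sketch is broadly in the spirit of how Cluckers and Loeser actually argue, but your second step has a real gap that you yourself flag without closing. The vanishing of $f_F$ on $S(F)$ is \emph{not} a first-order Denef--Pas sentence in any direct way: the quantities $\card(Y_i(F,x))$ are point counts over the residue field, and counting is not first-order expressible. You cannot ``clear denominators and regroup'' your way to a Boolean combination of definable equalities, because an identity like $\card(Y_1(F,x)) = q_F \cdot \card(Y_2(F,x))$ among counts is not itself a formula in the language. The actual Cluckers--Loeser proof does not reduce to a single Denef--Pas sentence; rather, it exploits the hypothesis that $F_1$ and $F_2$ share the \emph{same} residue field, so that all residue-ring data (the classes $[Y_i]$ in the Grothendieck ring of $RF$-definable sets) are literally identical on both sides, and then uses cell decomposition and quantifier elimination to control the remaining value-group and valued-field dependence uniformly in the residue characteristic. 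Your final paragraph gestures at this, but the passage from ``residue-field counts agree'' to ``the whole expression vanishes iff it vanishes'' still requires the structural analysis of $C(S)$ carried out in \cite{cluckers2010constructible}, and that is the substance of the theorem rather than a preliminary.
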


This theorem allows us to transfer identities of motivic integrals of
constructible functions from a field $F_1$ of one characteristic to a
field $F_2$ of another characteristic provided that they have the same
residue field.  The constant $N$ depends on $f$ and is not explicit.

An easy corollary is a transfer principle for statements $\psi$ in the
Denef-Pas language.

\begin{corollary} Let $\psi$ be a statement in the Denef-Pas language.
  Then there exists $N$ such that for all pairs of fields $F_1,F_2\in
  \C_N$ with the same residue field, $\psi_{F_1}$ holds iff
  $\psi_{F_2}$ holds.
\end{corollary}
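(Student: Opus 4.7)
The plan is to reduce the corollary to Theorem~\ref{thm:transfer-principle} by encoding the truth value of $\psi$ as the (non)vanishing of a constructible motivic function on a point. First, since $\psi$ is a sentence (no free variables), it defines a definable subassignment $T_\psi$ of the one-point subassignment $\op{pt} = h[0,0,0]$: for each $K \in \op{Field}_\Q$, set $T_\psi(K) = \{\ast\}$ when the interpretation $\psi_K$ of $\psi$ in the structure $(K((t)),K,\ring{Z})$ is true, and $T_\psi(K) = \emptyset$ otherwise. This is a definable subassignment because it is cut out inside $h[0,0,0]$ by the formula $\psi$ itself.

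Next, set $f_\psi := \op{char}(T_\psi) \in C(\op{pt})$, the characteristic function of $T_\psi$, viewed as a constructible motivic function on $\op{pt}$. For each $p$-adic field $F \in \C_N$ (for $N$ large enough that the specialization is defined), $f_{\psi,F}$ is a function on $\op{pt}(F) = \{\ast\}$, and by construction $f_{\psi,F}(\ast) = 1$ when $\psi_F$ holds and $f_{\psi,F}(\ast) = 0$ otherwise. Equivalently, $f_{\psi,F}$ vanishes identically on $\op{pt}(F)$ if and only if $\psi_F$ is false.

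Finally, apply Theorem~\ref{thm:transfer-principle} to the constructible function $f_\psi$: there exists $N$ such that for any pair $F_1, F_2 \in \C_N$ sharing the same residue field, $f_{\psi,F_1}$ vanishes identically on $\op{pt}(F_1)$ iff $f_{\psi,F_2}$ vanishes identically on $\op{pt}(F_2)$. Translating via the equivalence above, $\psi_{F_1}$ is false iff $\psi_{F_2}$ is false, which is the desired transfer of truth values. There is no substantial obstacle: the only point to verify carefully is that any sentence in the Denef-Pas language does define a subassignment of the point and that its characteristic function is constructible, both of which are immediate from the definitions of $\op{Def}_\Q$ and $C(X)$ recalled in Section~\ref{sec:transfer} and its preceding subsections.
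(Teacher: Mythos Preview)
Your proof is correct and follows exactly the same approach as the paper: take the characteristic function of $\psi$ as a constructible function on a point and apply Theorem~\ref{thm:transfer-principle}. The paper's version is simply terser.
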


\begin{proof} Let $f\in C(\op{pt})$ be the function on a point that is
  the characteristic function of $\psi$.  Apply the transfer principle
  to $f$.
\end{proof}

\subsection{fundamental lemma}

We conclude this article with a proof of the fundamental lemma for the
spherical Hecke algebra for unramified groups in large positive
characteristic in the following form.

\begin{theorem} \label{thm:fl} For each absolute root system $R$,
  there is a constant $N=N_R\in\ring{N}$ such that the
  Langlands-Shelstad fundamental lemma for the spherical Hecke algebra
  $\H(G//K)$ holds for all unramified connected reductive groups $G$
  with absolute root system $R$ and all of its unramified endoscopic
  groups $H$ over $F$ for all $p$-adic fields $F\in \C_N$.
\end{theorem}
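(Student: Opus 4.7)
The plan is to express the fundamental lemma as a single identity $D = 0$ of a constructible motivic function on an appropriate definable parameter space, and then transfer this identity from characteristic zero (where it is a theorem of Ngô, via the usual reduction from the Lie algebra fundamental lemma to the Hecke algebra fundamental lemma) to large positive characteristic via Theorem \ref{thm:transfer-principle}. The essential point, already emphasized in the introduction, is that $B_\xi$ packages the entire homomorphism $b_\xi$ into one $q$-constructible function on $P^+\times H$, so that a single invocation of the transfer principle yields the theorem uniformly in $\lambda$.

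Concretely, I fix the absolute root system $R$ as a combinatorial fixed choice. Up to isomorphism there are finitely many unramified forms $G$ with root system $R$ and finitely many isomorphism classes of endoscopic data; each form and each endoscopic datum is parameterized by a definable cocycle space $Z$ as in Sections \ref{sec:defred} and \ref{sec:galois}, and the embedding $\xi:{}^LH\to{}^LG$ and the element $\dot w$ determining $\theta_H = \dot w\theta$ are all encoded as fixed choices together with $Z$. On the definable subassignment
\[
X = \{(\lambda,\gamma_H)\in P^+_G\times H_{G\text{-}reg}\mid \gamma_H\text{ has an image in }G_{qs}\}
\]
over $Z$, I would set
\[
D(\lambda,\gamma_H) = \Delta(\gamma_H,\gamma)\, SO_{\gamma_H}\!\bigl(B_\xi(\lambda,\cdot)\bigr) - \sum_{\gamma'} e(\gamma')\, O_{\gamma'}\!\bigl(\op{char}(L_G)(\lambda,\cdot)\bigr),
\]
where $\gamma$ is an image of $\gamma_H$ and the sum runs over a set of representatives of the rational classes in the stable class of $\gamma$. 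By Theorem \ref{thm:B}, $B_\xi(\lambda,\cdot)$ is constructible; $\op{char}(L_G)$ is constructible by Lemma \ref{lemma:cartan}; orbital integrals and stable orbital integrals of constructible functions are themselves constructible (Cluckers--Hales--Loeser, Cluckers--Gordon--Halupczok); and transfer factors are constructible (Hales). Combining these ingredients, $D$ is a well-defined $q$-constructible function on $X$.

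By the Langlands--Shelstad fundamental lemma for the spherical Hecke algebra in characteristic zero, $D_F$ vanishes identically on $X(F)$ for every $F$ of characteristic zero in $\C_{N_0}$ with $N_0$ large. Applying the transfer principle to $D$ produces a single $N_R\ge N_0$ such that $D_F\equiv 0$ on $X(F)$ for all $F\in \C_{N_R}$, in either characteristic. This is the fundamental lemma in large positive characteristic for all unramified $G$ with absolute root system $R$, uniformly in $\lambda$ and in the choice of endoscopic datum.

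The main obstacle is the bookkeeping needed to package every ingredient—the normalization and $\kappa$-twist of the orbital integrals, the Langlands--Shelstad transfer factor with its sign and splitting choices, the matching of images $\gamma$ of $\gamma_H$, and the parameterization of unramified forms and endoscopic data over the cocycle space—into a single constructible motivic function on one definable parameter space. Once this is done, the proof itself is a formal application of the transfer principle. The fact that $B_\xi$ is a single constructible object is decisive: transferring $b_\xi(f_\lambda)$ one $\lambda$ at a time would lose finitely many primes for each $\lambda$ and leave nothing, whereas transferring $D$ once costs only finitely many primes in total.
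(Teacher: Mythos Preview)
Your proposal is correct and follows essentially the same strategy as the paper: package the fundamental lemma as a single constructible identity over $P^+\times H$ (times auxiliary parameters) via $B_\xi$ and $\op{char}(L_G)$, invoke the characteristic-zero result (Ng\^o lifted to characteristic zero, then extended to the full Hecke algebra by \cite{hales1995fundamental}), and apply the transfer principle once, with finiteness of the data over a fixed root system handled by reduction to the adjoint group. Two small corrections: in your displayed formula for $D$ the transfer factor should weight each $O_{\gamma'}$ inside the sum on the $G$-side (as in the paper's Equation~\ref{eqn:fl}) rather than multiply the stable side, and the constructibility of the group-level transfer factor is not an external citation but is proved in this paper's own appendix (Section~\ref{sec:xfer}, Theorem~\ref{thm:delta2}); only the Lie-algebra version was previously known.
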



\begin{proof}
  We assume that the reader is familiar with the proof that the
  fundamental lemma for the unit element of the Hecke algebra can be
  transferred from one field to another by the transfer principle of
  motivic integration \cite{cluckers2011transfer}.  The method is the
  same here.  Once we establish that the fundamental lemma can be
  expressed as an identity of constructible functions, then the
  machinery of motivic integration and the transfer principle takes
  over and gives the theorem.  Earlier work has already shown how
  orbital integrals can be expressed as motivic integrals of
  constructible functions.  Our proof therefore reduces to checking
  the constructibility of the Langlands-Shelstad transfer factor and
  to checking the constructibility of the function $B_\xi$.

  The fundamental lemma takes the form
\begin{equation}\label{eqn:fl}
\sum_{\gamma_G}\Delta_0(\gamma_H,\gamma_G,\cdots)
\op{O}(\gamma_G,f_\lambda) - \op{SO}(\gamma_H,b_\xi(f_\lambda)) = 0.
\end{equation}
Stable orbits of regular semisimple elements are definable as fibers
of the Chevalley morphism $G\to T/W$.  The invariant motivic measure
on stable orbits is the volume form attached to a Leray residue of an
invariant differential form on the group with respect to the canonical
form on $T/W$.  

The constructibility of the transfer factor is treated in Appendix
\ref{sec:xfer}.  The ellipsis $(\cdots)$ in the transfer factor
indicates extra free parameters such as a parameter running over
$a$-data, a parameter running over admissible pinnings for the
canonical normalization, and uniformizing parameters used in our
explicit treatment of the $\chi$-data.  The $p$-adic transfer factor
is independent of these choices, but in dealing with constructible
motivic functions, it is best to make the dependence on the parameters
explicit (or at least honor them with an ellipsis).

The homomorphism $b_\xi(f_\lambda)$ can be replaced with
the constructible function $B_\xi$.

We may consider the left-hand side of Equation \ref{eqn:fl} as a
$q$-constructible function of $(\lambda,\gamma_H,\cdots)\in P^+\times
H\times\cdots$, all over a definable cocycle space $Z$ used to
parameterize an unramified splitting field of $G$ and $H$.

The fundamental lemma holds for the unit element in positive
characteristic by the work of Ng\^o \cite{ngo2010lemme}.  This can be
lifted to characteristic zero \cite{cluckers2011transfer},
\cite{waldspurger2006endoscopie}.  It extends to the full Hecke
algebra in characteristic zero \cite{hales1995fundamental}.  Hence the
identity (\ref{eqn:fl}) holds in characteristic zero.  By the transfer
principle, there exists $N$ such that the fundamental lemma also holds
for all fields $F\in\C_N$.

Furthermore, the arguments of \cite{hales1995fundamental} reduce the
fundamental lemma for the Hecke algebra (in characteristic zero or
large positive characteristic) to $G = G_{adj}$.  For an adjoint
group, there are only finitely many choices of unramified $G$ and $H$
up to equivalence associated with a given root system and only
finitely many choices of $L$-morphisms $\xi$ that satisfy our
conditions.  Thus, we can arrange for $N$ to depend only on the root
system.

It is important for the left-hand side of the equation to be viewed as
a single identity with $P^+$ forming a factor of the definable
subassignment, rather than viewed as an infinite collection of
identities indexed by $\lambda\in P^+$.  This allows us to invoke the
transfer principle a single time, rather than once for each
$\lambda\in P^+$.
\end{proof}

\section{Appendix on transfer factors}\label{sec:xfer}

In this section, we assume familiarity with the Langlands-Shelstad
transfer factor \cite{langlands1987definition}.

In \cite{gordon}, we showed that the Lie algebra transfer factor is a
constructible motivic function.  In that article, by restricting
attention to a small neighborhood of the identity element of the
group, we were able to avoid the analysis of multiplicative characters
that appear in the group-level transfer factor.  In this appendix we
analyze the multiplicative characters and prove that group-level
transfer factor is constructible for unramified endoscopic data.

We use the canonical normalization of transfer factors given in
\cite[\S7]{hales1993simple}.  The canonical normalization requires a
choice of an admissible pinning.  The admissible pinning involves a
choice of simple root vectors $X_\alpha$ (with respect to a fixed
Borel subgroup and Cartan).  The choices $X_\alpha$ range over a
definable subassignment, and we obtain the canonical normalization by
introducing a free parameter into the transfer factor ranging over the
definable subassignment.

\subsubsection{$a$-data}

To define the transfer factor for $p$-adic fields, a choice of
$a$-data is made, but the transfer-factor is in fact independent of
the choice of $a$-data.

This section introduces a definable subassignment of $a$-data and
introduces an explicit free variable $a$ into the transfer factor that
ranges over the definable subassignment of $a$-data.  

We begin with a review of $a$-data for a $p$-adic field, then show how
to make the construction as a definable subassignment.  Let $\Gamma$
be the Galois group of a Galois extension $L/F$.  We assume that
$\Gamma$ acts on a finite set $R$ of roots.  The $a$-data are a
collection of constants $a_\alpha\in L^\times$ indexed by $\lambda\in
R$ such that
\begin{equation}\label{eqn:a}
a_{-\lambda} = -a_\lambda,\quad a_{\sigma\lambda} 
= \sigma(a_\lambda),\quad \text{ for } \sigma\in \Gamma.
\end{equation}
Let $\epsilon:R\to R$, given by
$\epsilon(\lambda)=-\lambda\ne\lambda$.  Let $O$ be the orbit of some
$\lambda\in R$ under $\langle\Gamma,\epsilon\rangle$.  The choice of
$a$-data can clearly be made orbit by orbit.  If there is no
$\sigma\in \Gamma$ such that $\sigma\lambda=-\lambda$, we have a
specific choice of $a$-data (selecting a given $\lambda\in O$) given
by
\[
a_{\sigma\lambda}=1,
\quad a_{-\sigma\lambda}=-1,\quad \sigma\in\Gamma.
\]
If some $\sigma_0\in\Gamma$ gives $\sigma_0\lambda=-\lambda$, then we
proceed as follows. Let $F_{+\lambda}$ be the fixed field of
$\Gamma_{+\lambda} = \{\sigma\in\Gamma\mid \sigma\lambda=\lambda\}$
and we let $F_{\pm\lambda}$ be the fixed field of $\Gamma_{\pm\lambda}
= \{\sigma\in\Gamma\mid \sigma\lambda=\pm\lambda\}$.  The extension
$F_{+\lambda}/F_{\pm\lambda}$ is quadratic.  We may choose $a$-data by
choosing $a_\lambda\in F_{+\lambda}$ such that $\sigma_0(a_\lambda) =
-a_\lambda$ then extending uniquely to the entire orbit by the
relation (\ref{eqn:a}).  Specifically, when the quadratic extension is
unramified, the choice of $a_\lambda$ can be taken to run over units
of $F_{+\lambda}$ such that its square is a nonsquare in
$F_{\pm\lambda}$.  When the quadratic extension is ramified, we take
$a_\lambda$ to run over uniformizers in $F_{\lambda}$ such that its
square lies in $F_{\pm\lambda}$.  We see by these explicit
descriptions that the tuple $(a_\lambda)$, indexed by $\lambda$, is a
parameter in a definable subassignment.

\subsubsection{$\Delta_{II}$}
Two terms in the transfer factor rely on multiplicative characters
constructed from $\chi$-data: the terms $\Delta_{II}$ and the term
$\Delta_2$.

\begin{lemma}  
  For unramified endoscopic data,
  there is a $q$-constructible function representing $\Delta_{II}$
  (after introducing some free parameters ranging over definable
  subassignments).
\end{lemma}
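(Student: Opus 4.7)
The plan is to verify that each ingredient in the standard factorization
\[
\Delta_{II}(\gamma_H,\gamma) \;=\; \prod_{\alpha}
\chi_\alpha\!\left(\frac{\alpha(\gamma)-1}{a_\alpha}\right)
\]
is either Presburger-constructible or definable, and then to multiply. Here the product runs over representatives of the orbits of $\Gamma$ (enlarged by the sign action $\alpha\mapsto -\alpha$) on the set of roots of $T$ in $G$ that are not roots of $H$, the $a_\alpha$ are the $a$-data from the previous subsection, and $\chi_\alpha$ is a character of the multiplicative group $F_\alpha^\times$ of the fixed field of the stabilizer of $\alpha$. The product is finite with indexing set depending only on a fixed combinatorial datum, so it suffices to prove constructibility of a single factor, allowing free parameters labeling the orbit representative.

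First I would observe that the orbit structure (asymmetric, symmetric with $F_\alpha/F_{\pm\alpha}$ unramified, symmetric with $F_\alpha/F_{\pm\alpha}$ tamely ramified) is a purely combinatorial datum depending only on the abstract root datum and the abstract Galois group $\Gamma$ of Section~\ref{sec:galois}, hence contributes no obstruction to definability. The extensions $F_\alpha$ and $F_{\pm\alpha}$ are of bounded degree and can be handled in the Denef-Pas language by the coefficient-space trick of Section~\ref{sec:defred}, with free parameters ranging over a definable subassignment that encodes the coefficients of the defining polynomial.

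Next I would make the $\chi$-data explicit under the unramified hypothesis on the endoscopic datum. For asymmetric orbits we take $\chi_\alpha=1$ and there is nothing to do. For symmetric orbits with $F_\alpha/F_{\pm\alpha}$ unramified, we take $\chi_\alpha$ to be the unramified quadratic character of $F_\alpha^\times$, namely $x\mapsto (-1)^{\op{ord}_{F_\alpha}(x)}$, which is Presburger-constructible. For the tamely ramified symmetric case, we take a tamely ramified quadratic character of $F_\alpha^\times$ determined by the parity of $\op{ord}_{F_\alpha}(x)$ together with the class of $\op{ac}(x)$ modulo squares in the residue field, both of which are within the expressive power of the Denef-Pas language. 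The argument $(\alpha(\gamma)-1)/a_\alpha$ is definable as a function of $\gamma$ once an explicit definable diagonalization is introduced (using the results of Section~\ref{sec:definability} on the definability of Cartan subgroups and of strongly semisimple elements), and $a_\alpha$ already ranges over the definable subassignment of $a$-data built above. Assembling these pieces, each factor becomes a $q$-constructible function of $(\gamma_H,\gamma)$ and the free parameters, and the finite product is again $q$-constructible.

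The main obstacle I expect is not any of the individual constructibility claims, which are routine, but the need to thread all normalizations through the Denef-Pas language without secretly invoking $p$-adic limits or global compatible choices that are available only after specialization. This is handled, as in the treatment of the Lie algebra transfer factor in~\cite{gordon}, by exposing every choice as an explicit free parameter: for the $a$-data, for the uniformizers entering the explicit formulas for $\chi_\alpha$, for the coefficients cutting out $F_\alpha/F_{\pm\alpha}$, and for the admissible pinning used by the canonical normalization. Since the $p$-adic value of $\Delta_{II}$ is independent of these choices, the constructible function we build factors through the forgetful map on parameters, so specializes to the correct $\Delta_{II}$ on every $p$-adic field in $\C_N$ for $N$ sufficiently large.
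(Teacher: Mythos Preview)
Your approach matches the paper's: both reduce $\Delta_{II}$ to its factorization over root orbits, split into the asymmetric, symmetric-unramified, and symmetric-ramified cases, make the $\chi$-data explicit in each case, and observe that the characters factor through finitely many definable congruence classes so that $\chi_\alpha$ is a finite $\mu_n$-linear combination of characteristic functions of definable sets. Both introduce the uniformizer and the $a$-data as free parameters.

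One small imprecision to flag: in the ramified symmetric case you call $\chi_\alpha$ a ``tamely ramified quadratic character'' determined by the parity of $\op{ord}_{F_\alpha}(x)$ and by $\op{ac}(x)$ modulo squares. In general no order-$2$ extension of the quadratic character of $F_{\pm\alpha}^\times$ to $F_\alpha^\times$ exists; when $-1$ is a square in the residue field one must allow $\chi_\alpha(\varpi_+)=i$, so the character has order $4$ and its value depends on $\op{ord}_{F_\alpha}(x)\bmod 4$, not merely on parity. The paper handles this by casing on whether $-1$ is a square and writing $\chi_+=\sum_{\zeta\in\mu_4}\zeta\,\op{char}(D_\zeta)$ with each $D_\zeta$ definable over the parameter $\varpi_+$. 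Your constructibility conclusion is unaffected---the character still factors through a definable finite quotient---but the description needs this adjustment.
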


\begin{proof}  
  We begin with an explicit construction of some characters for a
  $p$-adic field.  Then we analyze the construction to see that it can
  be done constructibly.

  Let $F_+/F_\pm$ be a quadratic extension of $p$-adic fields with
  large residue characteristic.  Let
  $\varpi_+$ be a uniformizer in $F_+$.  We define a multiplicative
  character $\chi_+ = \chi_{F_+/F_{\pm}}:F_+\to \ring{C}^\times$ as
  follows.  If $F_+/F_\pm$ is unramified, let $\chi_+$ be the
  unramified character of order two.

  If $-1$ is a square in $F_+$, we define $\chi_+$ by
  $\chi_+(\varpi_+) = i\in\ring{C}$, and $\chi_+$ restricted to units
  is the unique character of order two.

  If $-1$ is a nonsquare in $F_+$, we define $\chi_+$ by
  $\chi_+(\varpi_+)=1$, and $\chi_+$ restricted to units is the unique
  character of order two.

  In every case, $\chi_+^4 = 1$.

  Now we analyze constructibility.  The condition that $-1$ is a
  square or nonsquare is a definable condition.  Assume that $F_+$ and
  $F_\pm$ are both extensions of $VF$, presented as usual by a
  definable space of the characteristic polynomial of a generator of
  the fields.  Introduce a free parameter $\varpi_+$ that runs over
  the constructible subassignment of uniformizers in $F_+$.  We claim
  that $\chi_+$ is a linear combination of characteristic functions
\[
\chi_+ = \sum_{\zeta\in\mu_4(\ring{C})} \zeta\, \op{char}(D_\zeta).
\]
where each $D_\zeta$ is constructible over the space of parameters.
This is essentially obvious: $F_+/F_\pm$ being unramified is a
definable condition on the coefficients of the characteristic
polynomial; the unique character of order two is given in terms of the
characteristic function on squares and nonsquares, etc.

Now we turn to the transfer factor $\Delta_{II}$.  It has the form
\[
\prod_\alpha \chi_\alpha
\left(\frac{\alpha(\gamma)-1}{a_\alpha}\right),
\]
where $\gamma$ is strongly regular semisimple.
It is a constructible function if each factor is a constructible
function. Each morphism $\gamma\to(\alpha(\gamma)-1)/a_\alpha$ is
definable, so we only need to check that each character $\chi_\lambda$
in some choice of $\chi$-data is constructible.  We use the characters
given above to do so.

There is no harm in partitioning the domain of $\Delta_{II}$ into
finitely many parts according to definable characteristics of the
element $\gamma$.  We consider a an extension $L/VF$ that splits the
centralizer of $\gamma$.  We may assume fixed abstract Galois data
$1\to\Gamma^t\to\Gamma\to \Gamma^u\to 1$ with enumeration $\sigma_i$
of the elements of $\Gamma$ for $L$ and we may assume a fixed action
of that data on the root system coming from the centralizer of
$\gamma$ (relative to a split torus).  This gives the indexing set $R$
of roots and action of $\Gamma$ as fixed choices used to partition the
domain of $\Delta_{II}$.

Let $\epsilon$ be an automorphism of $R$ that acts as $\lambda\mapsto
-\lambda$, and let $O(\lambda)$ be the orbit of $\lambda$ under
$\langle\Gamma,\epsilon\rangle$.  If there does not exist
$\sigma\in\Gamma$ such that $\sigma\lambda=-\lambda$, then we may take
the $\chi$-data for $\mu\in O(\lambda)$ to be the trivial character
(which is constructible).

Now assume that there exists $\sigma\in\Gamma$ such that
$\sigma\lambda = -\lambda$.  Then $F_{+\lambda}/F_{\pm \lambda}$ is a
nontrivial quadratic extension.  We set $\chi_\lambda = \chi_+$ for
this quadratic extension.  In more detail, we include free parameters
$\dot\sigma$ realizing each abstract automorphism $\sigma$ as a linear
transformation of $L/VF$.  The extension $F_{+\lambda}/VF$ and the
space of uniformizers $\varpi_+$ in the extension are then described
by definable conditions inside $L/VF$ (as in
\cite{cluckers2011transfer}).

By transport of structure, we obtain constructible $\chi$-data on the
entire orbit of $\lambda$, using the defining properties of
$\chi$-data: $\chi_{\sigma\lambda} = \chi_{\lambda}\cdot
\dot\sigma^{-1}$; $F_{+\sigma\lambda}=\dot\sigma F_{+\lambda}$;
$\varpi_{+\sigma\lambda}=\dot\sigma\varpi_{+\lambda}$, and so forth.
Running over all orbits this way, constructible $\chi$-data are
obtained.
\end{proof}

\subsubsection{$\Delta_2$}

We have now treated all terms except $\Delta_2$.  We recall that the
term $\Delta_2$ restricts to a multiplicative character on each Cartan
subgroup of $G$.  It is constructed from $\chi$-data by means of class
field theory reciprocity for tori.  The following theorem completes
our analysis of the transfer factors on groups.

\begin{theorem}\label{thm:delta2}  For unramified endoscopic data,
  there is a $q$-constructible function representing the transfer
  factor $\Delta$, possibly after introducing some free parameters.
  These parameters have no effect upon specialization to a $p$-adic
  field.
\end{theorem}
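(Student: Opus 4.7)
The plan is to reduce the full transfer factor to pieces already handled, with $\Delta_2$ being the only real remaining work. In the canonical normalization, $\Delta$ decomposes as a product of standard Langlands--Shelstad factors. The previous lemma handles $\Delta_{II}$; the discriminant factor $\Delta_{IV}$ is built from $\op{ord}(\alpha(\gamma)-1)$ and is manifestly a $q$-constructible function; and the sign $\Delta_I$ depends only on the splitting invariant, the $a$-data, and the admissible pinning, all of which have already been absorbed into free parameters ranging over definable subassignments. What remains is the character $\Delta_2$ on the Cartan subgroup $T = C_G(\gamma)^0$ obtained from $\chi$-data via local class field theory for $T$.

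First I would fix an unramified splitting field $L/VF$ of bounded degree for $T$, treated as a free parameter over the cocycle space $Z$ as in Section~\ref{sec:defred}, and restrict attention to the definable subassignment of strongly regular semisimple $\gamma$ whose centralizer splits over $L$. By the definition of $\Delta_2$, the value $\Delta_2(\gamma)$ is the value at $\gamma$ of the multiplicative character $\chi_T:T(F)\to\ring{C}^\times$ corresponding, via Langlands' duality for tori, to the cocycle $W_F\to \hat T$ built from the $\chi$-data of the preceding lemma. In the unramified setting the local reciprocity map is explicit: $T(F)$ decomposes into its maximal bounded subgroup $T(F)_b$ and a free lattice quotient $\Lambda\simeq X_*(T)^{\theta}$, and the Artin map sends a uniformizer to the Frobenius.

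I would then compute $\chi_T$ separately on the two factors. On $\Lambda$ the character is unramified, and its value at $\varpi^\lambda$ is the pairing of $\lambda$ with the Frobenius--Hecke parameter in $\hat T$ obtained by evaluating the $\chi$-data at Frobenius; this is a Presburger constructible function of the valuation parameter $\lambda\in\ring{Z}^r$, by the same mechanism that was used in Equation~(\ref{eqn:alambda}) to cut out $A_\lambda$. On $T(F)_b$ the character factors through the finite quotient $T(F)_b/T(F)_{b,+}\simeq T(k_F)$ by the first congruence subgroup, and its values are determined by the tame part of the $\chi$-data, which the proof of the preceding lemma already expressed as a finite sum $\sum_{\zeta\in\mu_4(\ring{C})}\zeta\cdot\op{char}(D_\zeta)$ with each $D_\zeta$ definable. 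Multiplying the two contributions yields an expression for $\Delta_2$ in the same form, and combining with the constructibility of $\Delta_I,\Delta_{II},\Delta_{IV}$ gives $q$-constructibility of $\Delta$.

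The main obstacle I anticipate is the bookkeeping needed to realize the abstract reciprocity isomorphism $H^1_{\mathrm{cts}}(W_F,\hat T)\simeq \op{Hom}_{\mathrm{cts}}(T(F),\ring{C}^\times)$ by an explicit formula in the Denef-Pas language, rather than merely as a canonical natural transformation. Here the unramifiedness hypothesis is essential: it forces the inertia contribution to factor through the tame quotient, so the entire isomorphism reduces to data that we already know how to treat definably --- the Frobenius action on $X_*(T)$, uniformizers of quadratic subextensions, and finite residue-field characters of order dividing $4$ --- so no genuinely new constructibility input is required beyond what was supplied for $\Delta_{II}$. The free parameters introduced ($a$-data, admissible pinnings, choices of uniformizers) leave the specialization to any given $p$-adic field unchanged, as required by the second sentence of the theorem.
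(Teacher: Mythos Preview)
Your proposal attempts a direct attack on $\Delta_2$ by making the Langlands reciprocity for tori explicit in the Denef--Pas language. The paper takes a completely different route: it never computes $\Delta_2$ on a general Cartan. Instead it uses a chain of reductions --- Levi descent for non-elliptic $\gamma$, adjustment by the constructible unramified central character $\chi_Z$ to reach bounded elements, the definable topological Jordan decomposition $\gamma=\gamma_s\gamma_u$ from Section~\ref{sec:definability}, and then Langlands--Shelstad descent to the (unramified) centralizer of $\gamma_s$ --- to shrink the problem by induction on $\dim G$ until $\gamma$ is topologically unipotent. At that point, since the $\chi$-data are tamely ramified, $\Delta_2$ is simply trivial, and nothing about reciprocity needs to be made constructible.

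Your direct approach has a genuine gap. You write ``fix an unramified splitting field $L/VF$ \dots for $T$'' and later invoke ``the unramified setting'' for the reciprocity map, but the Cartan $T=C_G(\gamma)^0$ of a strongly regular $\gamma$ in an unramified group $G$ is typically \emph{ramified}: the hypothesis ``unramified endoscopic data'' constrains $G$ and $H$, not their individual tori. Your decomposition $T(F)=T(F)_b\times\Lambda$ with $\Lambda\simeq X_*(T)^\theta$ (Frobenius-fixed cocharacters) is therefore wrong for ramified $T$, and the reduction of the bounded part to $T(k_F)$ requires the full tame inertia action, not just Frobenius. Even after repairing this (in large residue characteristic every torus is tame, so a tame version of what you want exists), you would still owe an explicit Denef--Pas realization of the reciprocity isomorphism for a \emph{ramified} tame torus --- this is exactly the ``bookkeeping'' you flag as the main obstacle, and it is not a formality. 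The paper's descent strategy is designed precisely to sidestep it.
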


\begin{proof}  
  The idea of the proof is that
  multiplicative characters can be chosen to be tamely ramified; that
  is, they have trivial restriction to topologically unipotent
  elements.  We have descent formulas for unramified groups that
  reduce the transfer factor to the case of topologically unipotent
  elements \cite{langlands2007descent} \cite{hales1993simple}
  \cite{langlands2007descent}.
  We freely use various lemmas on definability from Section
  \ref{sec:definability}.

  We enumerate the standard Levi components of $G$.  Each is a
  definable set.  If $\gamma_G$ is conjugate to an element $\gamma_M$
  in some proper Levi subgroup, then by descent formulas for transfer
  factors we have $\Delta(\gamma_G,\gamma_H) =
  \Delta_M(\gamma_M,\gamma_{M,H})$.  The element $\gamma_{M,H}$ is a
  conjugate of $\gamma_H$ in a Levi of $H$ constructed by descent.  By
  an induction on the dimension of the group, we may assume that
  $\Delta_M$ is constructible.  Every regular semisimple element that
  is not elliptic is conjugate to an element of a proper Levi
  subgroup.  We may now assume that $\gamma_G$ belongs to an elliptic
  Cartan subgroup $T$.

  Since $G$ is unramified, the connected center $Z^0 = Z(G)^0$ is also
  unramified and can be naturally identified with a torus in the
  center of $H$.  By Langlands and Shelstad, there is a character
  $\chi_Z$ on $Z^0$ such that
\[
\Delta(z\gamma_G,z\gamma_H) = \chi_Z(z)\Delta(\gamma_G,\gamma_H).
\]
The character is unramified \cite{hales1993simple}.  The character
$\chi_Z$ depends on $(\gamma_G,\gamma_H)$ only through the endoscopic
data $(G,H)$ and $\xi$.  

We claim that $Z^0$ is definable and that $\chi_Z$ is a constructible
function on $Z^0$.  The connected center $Z^0$ is definable as the
kernel of $G\to G_{ss}$, where $G_{ss}$ is a semisimple quotient of
$G$.  The root datum for $G_{ss}$ can be described as 
fixed choice in terms of the root datum for $G$.
There exists $\xi_0:{}^LH\to {}^LG$ such that $\chi_Z$ is
trivial on $Z^0$ \cite[Lemma 3.6]{hales1995fundamental}.  As we vary
the embedding $\xi:{}^LH\to{}^LG$, the transfer factor changes by an
unramified character that comes from an element
\[
\hat z\in Z(\hat H) \subseteq \hat T \to \hat T_1.
\]
It is enough to show that this unramified character is constructible.
We treat this element $\hat z$ and its image $\tau\in \hat T_1$ as a
fixed choice.  Following Equation \ref{eqn:identify}, we have
\[
\op{Hom}(T_H/T_H(O),\ring{C}^\times) 
= \hat T_1 = \op{Hom}(X^*(A_H),\ring{C}^\times).
\]
The element $\tau$ has finite order.  The character $\chi_Z$ is the
restriction of this character to $Z^0 \subseteq T_H$.  In the
constructible context, we start with $\hat z$ and $\tau$, writing
$\chi_Z$ on $T_H$ as
\[
\sum_{\mu\in X^*(A_H)} \mu(\tau)\op{char}(A_{H,\mu}),
\]
where $A_{H,\mu}$ is the definable set of Equation \ref{eqn:alambda}
for $H$.  This clearly restricts to a constructible function on $Z^0$.

By adjusting $(\gamma_G,\gamma_H)$ by a central element in $Z^0$, we
may reduce the proof of constructibility to the special case where
$\gamma_G$ and $\gamma_H$ lie in the maximal bounded subgroup of their
Cartan subgroups $T$ and $T_H$.  We take a definable topological
Jordan decomposition $\gamma_G = \gamma_s \gamma_u$, as described in
Section \ref{sec:definability}.  Replacing $\gamma_G$ and $\gamma_s$
by stable conjugates $\gamma'_G=\gamma_G^h$ and $\gamma_s^h$ (same
$h$), we may assume that $\gamma_s\in G_u$; that is, its centralizer
is unramified.  We may do the same on the endoscopic side.  We have
\[
\Delta_G(\gamma_G,\gamma_H) = c \Delta_G(\gamma'_G,\gamma'_H)
\]
where $c$ is the ratio of terms coming from $\Delta_{III}$.  The
$\Delta_{III}$ terms are constructible, so the proof of
constructibility reduces to the case where we may now drop primes and
assume that $\gamma_s$ has an unramified centralizer.  We construct
descent data $(G_s,H_s)$ for the centralizer of $\gamma_s$ in $G$ and
the corresponding centralizer in $H$.  By \cite{hales1993simple}, the
normalized transfer factors satisfy
\[
\Delta(\gamma_G,\gamma_H) = \Delta_s(\gamma_G,\gamma_H),
\]
where the right-hand side is computed with respect to the endoscopic
data $(G_s,H_s)$.  (In that reference, it is assumed that $\gamma_G\in
G(O_F)$ and $\gamma_H\in H(O_F)$, but that assumption is only needed
to prove the fact that the centralizer of $\gamma_s$ is unramified.
Since we have a separate argument for that fact, the descent formula
holds in our context as well. That reference also uses the topological
Jordan decomposition rather than our definable version, which is not
unique.  It can be checked that the formulas for the transfer factor are
insensitive to the choice of decomposition $\gamma=\gamma_s\gamma_u$.)
By an induction on the dimension of the group, the right-hand side is
constructible, and the proof is complete except in the case when
$\gamma_s$ is central.

We now assume that $\gamma_s$ is central and strongly compact.  Then
$\gamma_s\in K$ because $K$ is a maximal compact.  It is known that
$\chi_Z$ is trivial on $K$ \cite[Lemma
3.2]{hales1995fundamental}. Thus again adjusting by an element in the
center, we may assume that $\gamma_s=1$.  That is, we are reduced to
proving the constructibility of transfer factors on the set of
topologically unipotent elements.  We pick our $\chi$-data to be
tamely ramified.  This implies that the characters $\Delta_2$ are
trivial on topologically unipotent elements.  This reduces
the constructibility of $\Delta$ to the analysis of factors $\Delta_I$, $\Delta_{II}$,
$\Delta_1$, and $\Delta_{IV}$.  This has already been done.  This
completes the proof.
\end{proof}

      \bibliography{hecke} 

\begin{thebibliography}{38}
\providecommand{\natexlab}[1]{#1}
\providecommand{\url}[1]{\texttt{#1}}
\expandafter\ifx\csname urlstyle\endcsname\relax
  \providecommand{\doi}[1]{doi: #1}\else
  \providecommand{\doi}{doi: \begingroup \urlstyle{rm}\Url}\fi

\bibitem[Bouthier(2015)]{bouthier}
Alexis Bouthier.
\newblock G\'eom\'etrisation du lemme fondamental pour l'alg\`ebre de {H}ecke.
\newblock \url{http://arxiv.org/abs/1502.07148}, 2015.

\bibitem[Bruhat and Tits(1972)]{bruhat1972groupes}
Fran{\c{c}}ois Bruhat and Jacques Tits.
\newblock Groupes r{\'e}ductifs sur un corps local.
\newblock \emph{Publications Math{\'e}matiques de l'IH{\'E}S}, 41\penalty0
  (1):\penalty0 5--251, 1972.

\bibitem[Carter(1985)]{carter1985finite}
Roger~William Carter.
\newblock \emph{Finite groups of Lie type: Conjugacy classes and complex
  characters}, volume~5.
\newblock John Wiley \& Sons, 1985.

\bibitem[Casselman(1980)]{casselman1980unramified}
William Casselman.
\newblock The unramified principal series of $p$-adic groups. {I}. the
  spherical function.
\newblock \emph{Compositio Mathematica}, 40\penalty0 (3):\penalty0 387--406,
  1980.

\bibitem[Casselman(2005)]{casselman2005companion}
William Casselman.
\newblock A companion to {Macdonald}’s book on $p$-adic spherical functions.
\newblock preprint, 2005.

\bibitem[Casselman(2016)]{casymmetric}
William Casselman.
\newblock Symmetric powers and the {Satake} transform.
\newblock preprint, 2016.

\bibitem[Cely(2016)]{cely}
Jorge~E. Cely.
\newblock \emph{Motivic integration, the {Satake} transform, and the
  fundamental lemma}.
\newblock PhD thesis, University of Pittsburgh, 7 2016.

\bibitem[Chriss(1997)]{chriss}
Neil Chriss.
\newblock A geometric construction of the {Iwahori}--{Hecke} algebra for
  unramified groups.
\newblock \emph{Pacific Journal of Mathematics}, 179\penalty0 (1):\penalty0
  11--57, 1997.

\bibitem[Cluckers and Loeser(2008)]{cluckers2008constructible}
Raf Cluckers and Fran{\c{c}}ois Loeser.
\newblock Constructible motivic functions and motivic integration.
\newblock \emph{Inventiones mathematicae}, 173\penalty0 (1):\penalty0 23--121,
  2008.

\bibitem[Cluckers and Loeser(2010)]{cluckers2010constructible}
Raf Cluckers and Fran{\c{c}}ois Loeser.
\newblock Constructible exponential functions, motivic {Fourier} transform and
  transfer principle.
\newblock \emph{Annals of mathematics}, pages 1011--1065, 2010.

\bibitem[Cluckers et~al.(2011{\natexlab{a}})Cluckers, Gordon, and
  Halupczok]{cluckers2011btransfer}
Raf Cluckers, Julia Gordon, and Immanuel Halupczok.
\newblock Transfer principles for integrability and boundedness conditions for
  motivic exponential functions.
\newblock \emph{arXiv preprint arXiv:1111.4405}, 2011{\natexlab{a}}.

\bibitem[Cluckers et~al.(2011{\natexlab{b}})Cluckers, Gordon, and
  Halupczok]{cluckers2011local}
Raf Cluckers, Julia Gordon, and Immanuel Halupczok.
\newblock Local integrability results in harmonic analysis on reductive groups
  in large positive characteristic.
\newblock \emph{arXiv preprint arXiv:1111.7057}, 2011{\natexlab{b}}.

\bibitem[Cluckers et~al.(2011{\natexlab{c}})Cluckers, Hales, and
  Loeser]{cluckers2011transfer}
Raf Cluckers, Thomas Hales, and Fran{\c{c}}ois Loeser.
\newblock Transfer principle for the fundamental lemma.
\newblock In L.~Clozel, M.~Harris, J.-P. Labesse, and B.-C. Ng{\^o}, editors,
  \emph{On the stabilization of the trace formula}. International Press of
  Boston, 2011{\natexlab{c}}.

\bibitem[Duan(2003)]{duan}
Haibao Duan.
\newblock On the inverse {Kostka} matrix.
\newblock \emph{Journal of Combinatorial Theory, Series A}, 103\penalty0
  (2):\penalty0 363--376, 2003.

\bibitem[Goodman and Wallach(2000)]{goodman}
Roe Goodman and Nolan~R Wallach.
\newblock \emph{Representations and invariants of the classical groups},
  volume~68.
\newblock Cambridge University Press, 2000.

\bibitem[Gordon and Hales(2016)]{gordon}
Julia Gordon and Thomas Hales.
\newblock Endoscopic transfer of orbital integrals in large residual
  characteristic.
\newblock \emph{American Journal of Mathematics}, 138\penalty0 (1):\penalty0
  109--148, 2016.

\bibitem[Haines(2016)]{haines2016dualities}
Thomas~J Haines.
\newblock Dualities for root systems with automorphisms and applications to
  non-split groups.
\newblock \emph{arXiv preprint arXiv:1604.01468}, 2016.

\bibitem[Hales(1993)]{hales1993simple}
Thomas~C Hales.
\newblock A simple definition of transfer factors for unramified groups.
\newblock \emph{Contemporary Mathematics}, 145:\penalty0 109--109, 1993.

\bibitem[Hales(1995)]{hales1995fundamental}
Thomas~C Hales.
\newblock On the fundamental lemma for standard endoscopy: reduction to unit
  elements.
\newblock \emph{Canadian Journal of Mathematics}, 47\penalty0 (5):\penalty0
  974--994, 1995.

\bibitem[Jantzen(1977)]{jantzen1977darstellungen}
Jens~C Jantzen.
\newblock Darstellungen halbeinfacher {Gruppen} und kontravariante {Formen}.
\newblock \emph{Journal f{\"u}r die reine und angewandte Mathematik},
  290:\penalty0 117--141, 1977.

\bibitem[Kato(1982)]{kato1982spherical}
Shin-ichi Kato.
\newblock Spherical functions and a $q$-analogue of {Kostant}'s weight
  multiplicity formula.
\newblock \emph{Inventiones mathematicae}, 66\penalty0 (3):\penalty0 461--468,
  1982.

\bibitem[Kostant(1961)]{kostant1961lie}
Bertram Kostant.
\newblock Lie algebra cohomology and the generalized {Borel-Weil} theorem.
\newblock \emph{Annals of Mathematics}, pages 329--387, 1961.

\bibitem[Kottwitz et~al.(1982)]{kottwitz1982rational}
Robert Kottwitz et~al.
\newblock Rational conjugacy classes in reductive groups.
\newblock \emph{Duke Math. J}, 49\penalty0 (4):\penalty0 785--806, 1982.

\bibitem[Kottwitz and Shelstad(1999)]{kottwitz1999foundations}
Robert~E Kottwitz and Diana Shelstad.
\newblock Foundations of twisted endoscopy.
\newblock \emph{Ast{\'e}risque}, 1999.

\bibitem[Kumar et~al.(2009)Kumar, Lusztig, and Prasad]{kumar2009characters}
Shrawan Kumar, George Lusztig, and Dipendra Prasad.
\newblock Characters of simplylaced nonconnected groups verus characters of
  nonsimplylaced connected groups.
\newblock \emph{Contemp. Math}, 478:\penalty0 99--101, 2009.

\bibitem[Langlands and Shelstad(2007)]{langlands2007descent}
R~Langlands and Diana Shelstad.
\newblock Descent for transfer factors.
\newblock In \emph{The Grothendieck Festschrift}, pages 485--563. Springer,
  2007.

\bibitem[Langlands and Shelstad(1987)]{langlands1987definition}
Robert~P Langlands and Diana Shelstad.
\newblock On the definition of transfer factors.
\newblock \emph{Mathematische Annalen}, 278\penalty0 (1):\penalty0 219--271,
  1987.

\bibitem[Lemaire et~al.(2015)Lemaire, M{\oe}glin, and Waldspurger]{LMW}
Bertrand Lemaire, Colette M{\oe}glin, and Jean-Loup Waldspurger.
\newblock Le lemme fondamental pour l'endoscopie tordue: r{\'e}duction aux
  {\'e}l{\'e}ments unit{\'e}s.
\newblock \url{http://arxiv.org/abs/1506.03383}, 2015.

\bibitem[Lusztig(1983)]{lusztig1983singularities}
George Lusztig.
\newblock Singularities, character formulas, and a q-analog of weight
  multiplicities.
\newblock \emph{Ast{\'e}risque}, 101\penalty0 (102):\penalty0 208--229, 1983.

\bibitem[Macdonald(1971)]{macdonaldspherical}
Ian~G Macdonald.
\newblock Spherical functions on a group of p-adic type.
\newblock \emph{Publications of the {Ramanujan} Institute}, \penalty0 (2),
  1971.

\bibitem[Mirkovi{\'c} and Vilonen(2007)]{mirkovic2007geometric}
Ivan Mirkovi{\'c} and Kari Vilonen.
\newblock Geometric {Langlands} duality and representations of algebraic groups
  over commutative rings.
\newblock \emph{Annals of mathematics}, 166\penalty0 (1):\penalty0 95--143,
  2007.

\bibitem[Ng{\^o}(2010)]{ngo2010lemme}
Bao~Ch{\^a}u Ng{\^o}.
\newblock Le lemme fondamental pour les algebr\`es de {Lie}.
\newblock \emph{Publications math{\'e}matiques de l'IH{\'E}S}, 111\penalty0
  (1):\penalty0 1--169, 2010.

\bibitem[Springer(2010)]{springer2010linear}
Tonny~Albert Springer.
\newblock \emph{Linear algebraic groups}.
\newblock Springer Science \& Business Media, 2010.

\bibitem[Steinberg(1968)]{steinberg1968endomorphisms}
R~Steinberg.
\newblock \emph{Endomorphisms of algebraic groups}.
\newblock AMS, 1968.

\bibitem[van Leeuwen(2013)]{vanleeuwen}
Marc van Leeuwen.
\newblock Inverting the {Weyl} character formula.
\newblock \url{https://mathoverflow.net/questions/20188}, 2013.

\bibitem[Waldspurger(2006)]{waldspurger2006endoscopie}
J~Waldspurger.
\newblock Endoscopie et changement de caract{\'e}ristique.
\newblock \emph{Journal-Institute of Mathematics of Jussieu}, 5\penalty0
  (3):\penalty0 423, 2006.

\bibitem[Wendt(2001)]{wendt2001weyl}
Robert Wendt.
\newblock Weyl's character formula for non-connected lie groups and orbital
  theory for twisted affine lie algebras.
\newblock \emph{Journal of Functional Analysis}, 180\penalty0 (1):\penalty0
  31--65, 2001.

\bibitem[Zhu(2011)]{zhu2011geometric}
Xinwen Zhu.
\newblock The geometric {Satake} correspondence for ramified groups.
\newblock \emph{arXiv preprint arXiv:1107.5762}, 2011.

\end{thebibliography}
      
      \bibliographystyle{plainnat}

\end{document}